\newtheorem{thm}{Theorem}[section]
\newcommand{\bt}{\begin{thm}}
\newcommand{\et}{\end{thm}}
\newtheorem{cor}[thm]{Corollary}   %remember switch all {coro} to {cor}
\newcommand{\bc}{\begin{cor}}
\newcommand{\ec}{\end{cor}}
\newtheorem{lem}[thm]{Lemma}   %remember to switch all {lemma} to {lem}
\newcommand{\bl}{\begin{lem}}
\newcommand{\el}{\end{lem}}
\newtheorem{prop}[thm]{Proposition}
\newcommand{\bp}{\begin{prop}}
\newcommand{\ep}{\end{prop}}
\newtheorem{defn}[thm]{Definition}
\newcommand{\bd}{\begin{defn}}    % This produces an error????
\newcommand{\ed}{\end{defn}}
\newtheorem{rmrk}[thm]{Remark}   %remember to switch all {rmrk} to {rmrk}
\newcommand{\br}{\begin{rmrk}}
\newcommand{\er}{\end{rmrk}}
\newcommand{\Ric}{\operatorname{Ric}}
\newcommand{\be}{\begin{equation}}
\newcommand{\ee}{\end{equation}}
\newcommand{\R}{\mathbb{R}}
\newcommand{\diam}{\operatorname{diam}}
\newcommand{\disjointunion}{\sqcup}
\newcommand{\vol}{\operatorname{Vol}}
\DeclareMathOperator{\Rm}{Rm}
\newcommand{\sphere}{\mathbb{S}}
\begin{document}

\title[Continuity of Ricci Flow through Neckpinch Singularities ]{Continuity of Ricci Flow through Neckpinch Singularities}

\author{Sajjad Lakzian}
\thanks{Lakzian is partially supported as a doctoral student
by NSF DMS \#1006059.}
\address{CUNY Graduate Center and Lehman College}
\email{SLakzian@gc.cuny.edu}

\date{}

\keywords{}

\begin{abstract}
	In this article, we consider the Angenent-Caputo-Knopf's Ricci Flow through neckpinch singularities. We will explain how one can see the A-C-K's Ricci flow through a neckpinch singularity as a flow of integral current spaces. We then prove the continuity of this weak flow with respect to the Sormani-Wenger Intrinsic Flat (SWIF) distance.
\end{abstract}

\maketitle

%%%%%%%%%%%%%%%%%%%%%%      SECTION :   INTRODUCTION     %%%%%%%%%%%%%%%%%%%%
%%%%%%%%%%%%%%%%%%%%%%%%%%%%%%%%%%%%%%%%%%%%%%%%%%%%%%%

\section{Introduction}

There are many parallels between Hamilton's Ricci Flow and Mean Curvature Flow. While Ricci Flow with surgery was proposed by Hamilton (see ~\cite{Hamilton-Singularities}) and modified and developed by Perelman (see \cite{Perelman-RFS} and ~\cite{Perelman-extinction}), a \emph{canonical} Ricci Flow through singularities which could be a gateway to a notion of Weak Ricci Flow is only being explored and defined recently (see ~\cite{ACK}). On the contrast, weak MCF was developed by Brakke by applying Geometric Measure Theory and viewing manifolds as varifolds. Recenly White proved that Brakke Flow is continuous with respect to the Flat distance, when the varifolds are viewed as integral current spaces (see ~\cite{White-2009}.) For the Ricci Flow - in contrast with MCF - there is no apriori ambient metric space so one needs to work with intrinsic notions of convergence. The theory of Intrinsic Flat distance has been recently developed by Sormani-Wenger in ~\cite{SorWen2} which provides a framework for our work. Here, we prove that the weak Ricci Flow through neckpinch singularity proposed by Angenent-Caputo-Knopf is continuous with respect to Sormani-Wenger Intrinsic Flat distance when the Riemannian manifolds flowing through the neckpinch singularity and the resulting singular spaces are viewed as integral current spaces. 

Consider the Ricci Flow on the $\sphere^{n+1}$ starting from a rotationally symmetric metric $g_0$. Angenent-Knopf in \cite{Neckpinch} showed that if $g_0$ is pinched enough, then the flow will develop a neckpinch singularity (see Definition \ref{defn-neckpinch}) in finite time $T$ and they computed the precise asymptotics of the profile of the solution near the singular hypersurface and as $t \nearrow T$. 

Later, in ~\cite{ACK}, Angenent-Caputo-Knopf proved that one can define a smooth forward evolution of Ricci Flow through the neckpinch singularity. They achieved that basically by taking a limit of Ricci Flows with surgery and hence showed that Perelman's conjecture that a \emph{canonical} Ricci Flow with surgery exists is actually true in the case of the sphere neckpinch. Since the smooth forward evolution performs a surgery at the singular time $T=0$ and on scale $0$, therefore at all positive times the flow will consists of two disjoint smooth Ricci flows on a pair of manifolds.

In order to define a weak Ricci flow, we must view the pair of manifolds $M_1$ and $M_2$ as a single integral current space.  Recall that an integral current space $(X,d,T)$ defined in \cite{SorWen2} is a metric space $(X,d)$  endowed with an integral current structure $T$ using the Ambrosio-Kirchheim notion of an integral current \cite{AK} so that $X$ is the set of positive density of $T$. Following a suggestion of Knopf, we endow $M = M_1 \disjointunion M_2$ with a metric restricted from a metric space obtained by gluing the manifolds at either end of a thread of length $L(t)$.  The resulting integral current space does not include the thread (nor the point of singularity at time $t=T$) because every point in an integral current space has positive density. We will consider this approach and prove the following continuity result:

\begin{thm}\label{thm-main}
	Let $\left(X(t) , D(t) , T(t) \right)$ be a smooth rotationally and reflection symmetric Ricci flow on $\sphere^{n+1}$ for $t \in (-\epsilon , 0 )$ developing a neckpinch singularity at $T=0$ and continuing for $t \in (0,\epsilon)$ as a disjoint pair of manifolds joined by a thread of length $L(t)>0$ with $L(0)=0$ undergoing Ricci flow as in \cite{ACK}. Then, this is continuous in time with respect to the SWIF distance. 
\end{thm}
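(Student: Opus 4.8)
The plan is to establish continuity on the two regular time-intervals separately and then, where the substance lies, at the singular time $t=0$. On any compact subinterval of $(-\epsilon,0)$ the flow is a smooth family of metrics on the fixed sphere $\sphere^{n+1}$, and on any compact subinterval of $(0,\epsilon)$ it is a smooth family of metrics on the fixed pair $M_1\disjointunion M_2$ together with a joining thread of continuously varying length $L(t)$. A thread is one-dimensional, hence carries no $(n{+}1)$-dimensional mass and does not enter the integral current $T(t)$ at all; it affects only the distance function $D(t)$, and continuously so. Thus on these intervals the metrics are locally uniformly bi-Lipschitz and the current masses vary continuously, and SWIF-continuity follows from the intrinsic flat estimate for two bi-Lipschitz-close metrics on a fixed manifold (the $U=\emptyset$ case of the Lakzian--Sormani estimate \cite{LakzianSormani}), together with the elementary fact that changing the thread length by $\eta$ perturbs the ambient distances, hence the flat distance, by a quantity vanishing with $\eta$. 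From here on we treat $t=0$.

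First I would pin down the limit space. By the Angenent--Knopf asymptotics for the rotationally (and here reflection) symmetric neckpinch, $g(t)\to g(0)$ in $C^\infty_{loc}$ on the complement of the singular hypersurface $\Sigma_0=\{x_0\}\times\sphere^n$ as $t\nearrow 0$, where $g(0)$ turns $\sphere^{n+1}\setminus\Sigma_0$ into two incomplete warped products, each developing a cusp as $s\to x_0$. Since the neck radius $\psi(\cdot,t)$ tends to $0$ near $\Sigma_0$, the profile $\psi(\cdot,0)$ extends continuously to $x_0$ with value $0$, so the cross-sectional $n$-volumes through the neck tend to $0$; hence $T(0):=$ integration over $\sphere^{n+1}\setminus\Sigma_0$ has finite mass and $\partial T(0)=0$, and $(X(0),D(0),T(0))$ with $X(0)=\sphere^{n+1}\setminus\Sigma_0$ is a bona fide integral current space — the pinch point has zero density and is excluded, exactly as arranged in the construction above. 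The same $X(0)$ is reached as $t\searrow 0$: the Angenent--Caputo--Knopf continuation is by construction smooth away from its two surgery caps and converges back to $g(0)$ there, while $L(t)\to L(0)=0$ removes the thread.

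For the quantitative step I would invoke the Lakzian--Sormani ``smooth convergence away from a singular set'' estimate. Fix $\delta>0$ small, let $U_\delta=\{|s-x_0|<\delta\}$ be the corresponding tube around $\Sigma_0$ (for $t>0$ enlarge it slightly so that it also swallows the surgery caps and the vanishing thread), and let $N_\delta=\sphere^{n+1}\setminus U_\delta$. On $N_\delta$ the $C^\infty_{loc}$ convergence gives $(1-\varepsilon)\,g(0)\le g(t)\le(1+\varepsilon)\,g(0)$ with $\varepsilon=\varepsilon(t,\delta)\to 0$ as $t\to 0$; the excised region has $\vol_{g(t)}(U_\delta)+\vol_{g(0)}(U_\delta)\le C\,\delta\,\rho(\delta)^n$ with $\rho(\delta)=\sup_{|s-x_0|\le\delta}\psi(s,0)\to 0$ as $\delta\to0$; and the cross-section areas $\vol(\partial U_\delta)$ and the diameters of $X(t),X(0)$ stay bounded as $t\to 0$ (from the Angenent--Knopf control of the intermediate and outer regions). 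The estimate then exhibits a common metric space into which $X(t)$ and $X(0)$ embed isometrically, identifying their common part $N_\delta$ up to an auxiliary fattening of height $\bar h$, and yields
\[
\Flatnorm\!\big(X(t),X(0)\big)\;\le\;\vol_{g(t)}(U_\delta)+\vol_{g(0)}(U_\delta)+C(\delta)\,\bar h ,
\]
where $C(\delta)$ depends only on $\vol(\partial U_\delta)$ and the diameters and stays bounded as $t\to 0$, valid once $\bar h$ exceeds a fixed multiple of the distance-distortion between $g(t)$ and $g(0)$ on $N_\delta$ — a quantity tending to $0$ with $\varepsilon(t,\delta)$. Choosing $\bar h=\bar h(t,\delta)$ to satisfy this constraint yet tend to $0$ as $t\to 0$ and then letting $t\to 0$ kills the last term and leaves $\limsup_{t\to 0}\Flatnorm\big(X(t),X(0)\big)\le 2C\,\delta\,\rho(\delta)^n$; finally letting $\delta\to 0$ gives $\Flatnorm\big(X(t),X(0)\big)\to 0$, which is the asserted continuity at $t=0$ from both sides (the surgery caps of the continuation playing the role of the cusp regions).

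The main obstacle I anticipate is extracting exactly this geometric input from the delicate Angenent--Knopf and Angenent--Caputo--Knopf asymptotics: that the neck, respectively surgery, region has $(n{+}1)$-volume vanishing as $t\to 0$ and $\delta\to 0$; that the global diameter remains bounded across the singular time; and — underpinning the whole scheme — that no mass concentrates at the pinch point, so that $X(0)$ truly is an integral current space and the limits from the left and from the right genuinely coincide. Keeping the order of limits honest ($\varepsilon(t,\delta)\to 0$ for each fixed $\delta$ first, then $\delta\to 0$) is what stops the slower $C^\infty_{loc}$-convergence on the growing sets $N_\delta$ from spoiling the conclusion.
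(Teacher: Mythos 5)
Your proposal follows essentially the same strategy as the paper: continuity on the two open intervals via bi-Lipschitz closeness of the metrics, then continuity at $t=0$ by excising a shrinking tube around the singular set, invoking a Lakzian--Sormani--type estimate with the tube volume, the bi-Lipschitz constant $\varepsilon(t,\delta)$, and the distance distortion on the good region all tending to $0$, and finally taking the double limit $t\to 0$ then $\delta\to 0$. You also correctly identify the three load-bearing facts — the pinch point has zero density (so $X(0)$ is settled), the diameter stays bounded across $T$ (so the hemispherical widths are controlled), and the thread carries no mass and has $L(t)\to 0$. This matches the paper's decomposition into Lemmas~\ref{lem-smooth}, \ref{lem-density-sing}, \ref{lem-pre-surgery}, \ref{lem-post-surgery}.

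There is one step where your reasoning is too weak and, as written, would not go through. For $t\nearrow 0$ you claim the distance distortion
\[
\lambda \;=\;\sup_{x,y\in N_\delta}\bigl|d_{g(t)}(x,y)-d_{g(0)}(x,y)\bigr|
\]
``tends to $0$ with $\varepsilon(t,\delta)$,'' i.e.\ that the local bi-Lipschitz bound on $N_\delta$ controls it. It does not: minimizing geodesics between points of $N_\delta$ in $(M,g(0))$ or $(M,g(t))$ may cut through the neck region $U_\delta$, where you have no comparison between $g(t)$ and $g(0)$, and the $\varepsilon(t,\delta)$ bound gives no information about such shortcut lengths. The paper instead extracts the global bound $\bigl|\tfrac{d}{dt}d_{g(t)}(x,y)\bigr|\le C/\sqrt{T-t}$ from the Angenent--Knopf diameter-bound argument (Proposition~\ref{prop-diameter-bound}), which integrates to $\lambda\le C\sqrt{T-t}$ uniformly in $\delta$ (Lemma~\ref{lem-metric-distortion}). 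That is what makes $\bar h\to 0$ as $t\to 0$ for every fixed $\delta$. You should replace your appeal to $\varepsilon(t,\delta)$ for the distortion by this explicit integrated derivative bound. A smaller omission: the post-surgery space $X(t)$, $t>0$, is a disjoint union of two manifolds plus a thread and is therefore not itself a Riemannian manifold, so the original Lakzian--Sormani theorem does not literally apply; the paper proves an adapted version (Theorem~\ref{thm-sub-diffeo-new}) for integral current spaces whose current is supported on a Riemannian part with a metric merely \emph{related} to the Riemannian one, and you would need to state and use that version rather than the manifold-to-manifold statement.
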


Notice that the assumption $T=0$ in Theorem \ref{thm-main} is only for the sake of simplicity . The reflection symmetry in Theorem \ref{thm-main} is there to guarantee the finite diameter at the singular time. In general, we get the following corollary:

\begin{cor}\label{cor-main}
Let $\left(X(t) , D(t) , T(t) \right)$ be a smooth rotationally symmetric Ricci flow on $\sphere^{n+1}$ for , $t \in (-\epsilon , 0 )$ developing a neckpinch singularity at $T=0$ with finite diameter and continuing for $t \in (0,\epsilon)$ as a disjoint pair of manifolds joined by a thread of length $L(t)>0$ with $L(0)=0$ undergoing Ricci flow as in \cite{ACK}. Then, $X(t)$ is continuous in time with respect to the SWIF distance.
\end{cor}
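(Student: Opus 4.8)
The plan is to reduce the continuity statement to an intrinsic flat estimate for two Riemannian manifolds that agree, up to a small error, outside a small neighbourhood of the neck, and then to feed into that estimate the Angenent--Knopf neckpinch asymptotics (for $t<0$) and the Angenent--Caputo--Knopf forward evolution (for $t>0$). The only time at which continuity is not immediate is $t=0$: for any $t_{0}\ne 0$ the family $t\mapsto(X(t),D(t),T(t))$ is, near $t_{0}$, the integral current space associated to a smoothly varying Riemannian metric on a fixed manifold --- on $\sphere^{n+1}$ when $t_{0}<0$, and on the fixed pair $M_{1}\disjointunion M_{2}$ (together with the smoothly varying thread length $L(t)$, which contributes only a continuous term to cross-distances and nothing to the integral current structure) when $t_{0}>0$. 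Hence uniform convergence $D(t)\to D(t_{0})$ together with smooth convergence of the volume forms gives $X(t)\Fto X(t_{0})$, by the standard comparison between $C^{0}$/biLipschitz convergence of metrics and SWIF convergence. It therefore remains to prove $X(t)\Fto X(0)$ as $t\to 0^{\pm}$.

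First I would pin down the limit space. By Angenent--Knopf, away from the collapsing neck hypersurface $\Sigma\isom\sphere^{n}$ the metrics $g(t)$ converge smoothly, as $t\nearrow 0$, to a smooth incomplete metric $g(0)$ on $\sphere^{n+1}\ohne\Sigma$; since the image of $\Sigma$ degenerates to a single point of vanishing $(n{+}1)$-density, that point is not part of the integral current space, so $X(0)$ is $\sphere^{n+1}\ohne\Sigma$ equipped with the length metric of $g(0)$ and the current structure given by integration over it (a disjoint pair of components joined, in the metric completion, at the singular point). On the $t>0$ side the Angenent--Caputo--Knopf forward evolution is built precisely so that each $(M_{i},g_{i}(t))$ converges, as $t\searrow 0$ and smoothly away from the tip $q_{i}$ where the thread is attached, to the corresponding component of $X(0)$, while $L(t)\to 0$; so the limit from the right coincides with the limit from the left. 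In both regimes I would record the two uniform bounds the estimate needs: $\vol(X(t))\le V_{0}$, from the evolution of volume under Ricci flow, and $\diam(X(t))\le D_{0}$ for $t$ near $0$ --- the latter being exactly the finite-diameter hypothesis of Corollary~\ref{cor-main} (and, in the setting of Theorem~\ref{thm-main}, a consequence of reflection symmetry together with the Angenent--Knopf profile, which keeps both polar caps of bounded size).

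For the estimate at $t=0$, fix a small $\delta>0$ and let $U^{\delta}\subset\sphere^{n+1}\ohne\Sigma$ be the complement of a $\delta$-tubular neighbourhood of $\Sigma$ (on the $t>0$ side, remove instead a small geodesic ball around each tip $q_{i}$). For fixed $\delta$, smooth convergence gives $\vare(t,\delta):=\sup_{U^{\delta}}|g(t)-g(0)|_{g(0)}\to 0$ as $t\to 0$; and the neckpinch structure --- a single shrinking neck with $\psi_{\min}(t)\to 0$, the profile monotone away from the waist, together with the diameter bound --- forces both $\vol_{g(t)}(\sphere^{n+1}\ohne U^{\delta})$ and the boundary area $\area_{g(t)}(\bdry U^{\delta})$ to be small, uniformly for $t$ near $0$ once $\delta$ is small, and likewise for $g(0)$ (and for the Angenent--Caputo--Knopf caps on the $t>0$ side). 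I would then invoke the intrinsic flat estimate of Lakzian--Sormani type for two Riemannian manifolds agreeing outside a set: by constructing a common metric space that interpolates between the two manifolds across the good region $U^{\delta}$, it bounds $d_{\mathcal F}(X(t),X(0))$ by a quantity built from $\vare(t,\delta)$, the diameter $D_{0}$, $\area(\bdry U^{\delta})$, $\vol_{g(t)}(\sphere^{n+1}\ohne U^{\delta})$ and $\vol_{g(0)}(\sphere^{n+1}\ohne U^{\delta})$ (plus, on the $t>0$ side, a term of size $L(t)$ from the collapsing thread), which vanishes as all of these parameters do. Letting $t\to 0$ for fixed $\delta$ and afterwards $\delta\to 0$, a diagonal argument yields $d_{\mathcal F}(X(t),X(0))\to 0$; the argument for $t\to 0^{+}$ is identical, which completes the proof.

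The main obstacle is the quantitative control of the degenerating region rather than the soft structure: the reduction, the construction of the interpolating metric space, and the abstract SWIF estimate are routine, but one must show, uniformly in $t$ near $0$, that the volume of the excised neck (respectively caps) and the area of its bounding spheres tend to $0$, and that the diameter stays bounded --- all three resting on the precise Angenent--Knopf asymptotics and, for the diameter, on reflection symmetry. I expect verifying these estimates carefully to be where most of the work lies. A secondary point, essentially settled by the Angenent--Caputo--Knopf construction but needing to be made explicit in the integral-current-space language, is to check that their forward evolution genuinely emerges from the same singular space $X(0)$ that is approached from the left.
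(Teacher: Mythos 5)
Your proposal follows essentially the same route as the paper: reduce to continuity at the singular time, exhaust the complement of the pinching neck by precompact diffeomorphic regions, use the curvature bound off the neck together with the Angenent--Knopf/Angenent--Caputo--Knopf asymptotics to get biLipschitz closeness and a $\sqrt{T-t}$ distortion bound there, invoke the Lakzian--Sormani SWIF estimate (in its integral-current-space form, Theorem~\ref{thm-sub-diffeo-new}), and then send first $t\to 0$ and then the excision parameter to zero, using the finite-diameter hypothesis exactly where the paper does. The only minor imprecision is that the boundary-area term $\vol_{m-1}(\partial U^{\delta})$ need not itself be small --- it is multiplied by $\bar h + a\to 0$ for fixed $\delta$, so boundedness suffices --- but this does not affect the argument.
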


In order to prove Theorem \ref{thm-main}, in Theorem \ref{thm-sub-diffeo-new}, we adapt a result from the previous work of the author with Sormani ~\cite{Lakzian-Sormani} in order to estimate the SWIF distance between our spaces. In Lemmas \ref{lem-smooth}, \ref{lem-pre-surgery} and \ref{lem-post-surgery}, we prove the continuity of the flow prior, at and post the singular time respectively. 

The paper is organized as follows: In Section~\ref{sect-swif}, we give a brief review of the notion of SWIF distance and provide some results from the previous work of the Author with Sormani ~\cite{Lakzian-Sormani} which aid us in estimating the SWIF distance. Section \ref{sect-RF} provides a review of some of the basic fact about Ricci flow as well as a review of the recent work on the Ricci Flow neckpinch and smooth forward evolution (~\cite{Neckpinch}~\cite{AK-precise}~\cite{ACK}) which is key for our work in this paper. Section \ref{sect-main} is devoted to proving Theorem \ref{thm-main} which is the main result of this paper.

\subsection*{Acknowledgments}
The author would like to express his deep gratitude towards Professor Dan Knopf for his hospitality, his time, his effort to explain the details of his work with Angenent and Caputo to the author and for all the helpful discussions the author has had during his visit to University of Texas at Austin and afterward. The author would also like to thank his doctoral adviser, Professor Christina Sormani for her constant support and extremely helpful suggestions regarding both the materials presented here and the structure and exposition of the paper.

%%%%%%%%%%%%%%%%%%%%%%%%%%%%%%%%%%%%%%%%%%%
%%%%%%  SECTION : REVIEW OF ESTIMATES ON SWIF DISTANCE %%%%%
%%%%%%%%%%%%%%%%%%%%%%%%%%%%%%%%%%%%%%%%%%%

\section{Review of Estimates on SWIF Distance}\label{sect-swif}

The notion of an integral current space and the intrinsic flat distance were first introduced by Sormani-Wenger in ~\cite{SorWen2}, using Ambrosio-Kirchheim's notion of an integral current on a metric space ~\cite{AK}.

Recall that if $Z$ is a metric space and  $T_i \in \mathbf{I}_{m}(Z)$ , $i=1,2$ are two $m$-integral currents on $Z$ (see ~\cite{AK} for the definition of current structure for metric measure spaces), the flat distance between $T_i$'s is defined as follows:
\be
	d^Z_{\mathbf{F}}(T_1 , T_2) := \inf \left\{ \mathbf{M}(U) + \mathbf{M}(V) : T_1 - T_2 = U + \partial V \right\}.
\ee

Now let 
\be
\left( X_i , d_i , T_i \right) \;\;  i = 1,2, 
\ee
be two $m$-integral current spaces. Their Sormani-Wenger Intrinsic Flat distance is defined as
\be
	d_{\mathcal{F}}(X_1 , X_2) := \inf d^Z_{\mathbf{F}} \left( \varphi_{1\#} T_1 , \varphi_{2\#} T_2\right)
\ee
where the infimum is taken over all metric spaces $Z$ and distance preserving embeddings $\varphi_i : \bar{X}_i \to Z $. Note that $\bar{X}_i$ are metric completions of $X_i$ and $\varphi_\# T$ is the push forward of $T$. 

We will briefly review some of these notions in Section \ref{subsec-ics-and-mc}.

%%%% SUBSECTION: Integral Current Spaces and Metric Completion %%%%%

\subsection{Integral Current Spaces and Metric Completion}\label{subsec-ics-and-mc}

An integral current space $(X , d , T)$ is a metric space $(X , d)$ equipped with a current structure $T$ such that $\operatorname{set}(T) = X$ (see Definition \ref{defn-settled-completion}). 

An oriented Riemannian manifold $\left(M^m, g \right)$ of finite volume can naturally be viewed as an $m$ - integral current space by specifying the current structure $T$ to be the integration of differential $m$ - forms against $M$ 
\be\label{eq-riem-curr}
	T(\omega) = \int_{M^m} \; \omega.
\ee

More generally, an $m$ - integral current structure $T$ of the current space  $(X,d,T)$ is an integral current $T \in \mathbf{I}_m (\bar{X})$ as is defined by Ambrosio-Kirchheims in ~\cite{AK}. The current structure $T$ provides an orientation and a measure on the space which is called the \textbf{mass} measure of $T$ and is denoted by $||T||$. The mass measure of an oriented Riemannian manifold $M^m$ considered as an $m$ - integral current space is just the Lebesgue measure on $M^m$ as can be seen from \ref{eq-riem-curr}. 

The settled completion $\operatorname{set}(X)$ of an integral current space is the set of points in the metric completion $\bar{X}$ with the positive lower density for the mass measure $||T||$ (see Definition \ref{defn-settled-completion}.)

\begin{defn}[~\cite{SorWen2}] \label{defn-settled-completion} Let $\left(X  , d , \mu \right)$ be a metric measure space. The settled completion $X'$ of $X$ is the set of all points $p$ in the metric completion  $\bar{X}$ of $X$ with positive lower density
\be
	\Theta_* (p) = \liminf_{r \to 0} \frac{\mu \left(B(p,r) \right)}{r^m} >0.
\ee

\end{defn}

%%%%%%% SUBSECTION: Estimates on SWIF  %%%%%%%

\subsection{Estimate on SWIF}

Here, we will present results regarding the estimates on the intrinsic flat distance between manifolds which are Lipschitz close or have precompact regions with that property. There are estimates on the SWIF distance given in ~\cite{SorWen2} which employ techniques from geometric measure theory. The results presented in this section uses the simple idea of hemispherical embedding which is easier to understand especially for the reader without a background in GMT.

To estimate the intrinsic flat distance between two
oriented Riemannian manifolds, one needs to find
distance preserving embeddings, $\varphi_i: M^m_i \to Z$, into a common complete
metric space, $Z$.  Since in the definition of the intrinsic flat distance using a Stoke's type formula, one needs to find a 
filling submanifold, $B^{m+1}\subset Z,$ and an
excess boundary submanifold, $A^m\subset Z$, such that
\be\label{Stokes}
\int_{\varphi_1(M_1)}\omega -\int_{\varphi_2(M_2)}\omega=\int_B d\omega +\int_A \omega,
\ee
then, the intrinsic flat distance is bounded above by
\be \label{est-int-flat}
d_{\mathcal{F}}(M^m_1, M^m_2) \le \vol_m(A^m)+\vol_{m+1}(B^{m+1}). 
\ee

Generally, the filling manifold and excess boundary can have corners or
more than one connected component. Below we present results regarding the construction of these manifolds.

\begin{defn}[~\cite{Lakzian-Sormani}]
Let $D>0$ and $M, M'$ are geodesic metric spaces.  
We say that $\varphi: M \to M'$ is a 
\textbf{$D$-geodesic
embedding}
if for any smooth minimal geodesic,
$\gamma:[0,1]\to M$, of
length $\le D$ we have
\be\label{star-0}
d_{M'}(\varphi(\gamma(0)), \varphi(\gamma(1)))=L(\gamma).
\ee
\end{defn}

\begin{prop} [~\cite{Lakzian-Sormani}]\label{prop-hem}
Given a manifold $M$ with Riemannian metrics
$g_1$ and $g_2$ and $D_1, D_2, t_1, t_2>0$.  Let
$M'=M\times[t_1,t_2]$ and let $\varphi_i: M_i\to M'$
be defined by $\varphi_i(p)=(p,t_i)$.   If a metric
$g'$ on $M'$ satisfies
\be \label{ineq-prop-hem}
g' \ge  dt^2 +  \cos^2((t-t_i)\pi/D_i) g_i  \textrm{ for } |t-t_i|<D_i/2
\ee
and
\be \label{eq-prop-hem}
g'=dt^2 +g_i \textrm{ on }M\times \left\{t_i\right\}\subset M'
\ee
then any geodesic, $\gamma:[0,1]\to M_i$,
of length $\le D_i$ satisfies (\ref{star-0}).   If, the diameter is bounded,
$\diam_{g_i}(M)\le D_i$, then $\varphi_i$ is a distance preserving
embedding.

Furthermore, for $q_1, q_2\in M$, we have
\be\label{extra-edges}
d_{M'}(\varphi_1(q_1), \varphi_2(q_2))\ge
d_{M_i}(q_1,q_2).
\ee

\end{prop}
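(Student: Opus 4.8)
\medskip
\noindent\emph{Proof proposal.} The plan is to estimate lengths of curves in $(M',g')$ from below by lengths in the model warped product appearing on the right of \eqref{ineq-prop-hem}, which I will recognize as a rescaled, truncated spherical suspension built from $(M,g_i)$; once the endpoints of the relevant curves are seen to lie over the ``equator'' of that suspension, the classical suspension distance formula (the spherical law of cosines) supplies the exact bound.

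I would begin with the easy upper bound: by \eqref{eq-prop-hem} the map $p\mapsto(p,t_i)$ is a Riemannian isometric immersion of $(M,g_i)$ onto the slice $M\times\{t_i\}$, so pushing any path in $M$ from $p$ to $q$ into $M'$ gives $d_{M'}(\varphi_i(p),\varphi_i(q))\le d_{g_i}(p,q)$. The heart of the matter is the reverse inequality. Let $\sigma=(\alpha,\tau):[0,1]\to M'$ be any curve from $(p,t_i)$ to $(q,t_i)$. If $\tau$ ever reaches a level at distance $D_i/2$ from $t_i$, then the subarcs of $\sigma$ lying before the first, and after the last, such time are each contained in $\{|t-t_i|\le D_i/2\}$, where $g'\ge dt^2$ (by \eqref{ineq-prop-hem} on the open region, extended to the boundary by continuity of $g'$), so $L_{g'}(\sigma)\ge\tfrac{D_i}{2}+\tfrac{D_i}{2}=D_i$. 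Otherwise $\sigma$ stays inside $U_i:=\{|t-t_i|<D_i/2\}$, where $g'\ge g_i^{\mathrm{mod}}:=dt^2+\cos^2\!\big(\tfrac{(t-t_i)\pi}{D_i}\big)g_i$, hence $L_{g'}(\sigma)\ge L_{g_i^{\mathrm{mod}}}(\sigma)\ge d_{(U_i,g_i^{\mathrm{mod}})}\big((p,t_i),(q,t_i)\big)$. The change of variable $s=\tfrac{\pi}{2}+(t-t_i)\tfrac{\pi}{D_i}\in(0,\pi)$ turns $g_i^{\mathrm{mod}}$ into $\big(\tfrac{D_i}{\pi}\big)^2\big(ds^2+\sin^2 s\cdot(\tfrac{\pi}{D_i})^2 g_i\big)$, i.e.\ it identifies $(U_i,g_i^{\mathrm{mod}})$ with the spherical suspension of $\big(M,(\tfrac{\pi}{D_i})^2g_i\big)$, rescaled by $(D_i/\pi)^2$ and with its two cone points deleted; since deleting points only increases distances and the level $t_i$ corresponds to the suspension ``equator'' $s=\pi/2$, the suspension distance formula gives $d_{(U_i,g_i^{\mathrm{mod}})}\big((p,t_i),(q,t_i)\big)\ge\tfrac{D_i}{\pi}\min\!\big(\tfrac{\pi}{D_i}d_{g_i}(p,q),\pi\big)=\min\big(d_{g_i}(p,q),D_i\big)$. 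Combining the two cases, $d_{M'}(\varphi_i(p),\varphi_i(q))\ge\min\big(d_{g_i}(p,q),D_i\big)$ for all $p,q$.

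With both bounds in hand the proposition follows: whenever $d_{g_i}(p,q)\le D_i$ --- in particular for the endpoints of a minimal geodesic $\gamma$ with $L(\gamma)\le D_i$, for which $d_{g_i}(\gamma(0),\gamma(1))=L(\gamma)$ --- the two estimates pinch $d_{M'}(\varphi_i(p),\varphi_i(q))$ to $d_{g_i}(p,q)$, which is precisely \eqref{star-0}; and if $\diam_{g_i}(M)\le D_i$ this holds for every pair, so $\varphi_i$ is distance preserving. Inequality \eqref{extra-edges} is obtained along the same lines, estimating the length of a path from $(q_1,t_1)$ to $(q_2,t_2)$ by its portion inside $U_1$ (resp.\ $U_2$) and applying the suspension comparison anchored at the $t_1$- (resp.\ $t_2$-) slice. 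The one genuinely geometric step --- the main obstacle --- is the identification of the right-hand side of \eqref{ineq-prop-hem} as a rescaled truncated spherical suspension: this is exactly the place where the $\cos^2$ warping and the threshold $D_i$ conspire to make the ``equatorial'' copy of $M$ length-minimizing up to distance $D_i$ (equivalently, one could prove this directly, as the statement that the equator of a round sphere is minimizing up to half its circumference); everything else is routine bookkeeping with curve lengths.
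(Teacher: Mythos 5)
Your proof of the main assertions is correct and is essentially the argument behind the cited result (note the paper itself offers no proof beyond the reference to Proposition 4.2 of Lakzian--Sormani, which is proved exactly by this comparison): bound lengths below using \eqref{ineq-prop-hem}, recognize the model as a (rescaled, truncated) spherical suspension --- equivalently reduce to the two--dimensional round model --- use that the equator of a round sphere is minimizing up to half its circumference, treat separately curves that reach $|t-t_i|=D_i/2$, and get the matching upper bound from the isometric slice \eqref{eq-prop-hem}. That part I would accept as written.

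There is, however, a genuine gap in the last step, where you assert that \eqref{extra-edges} follows ``along the same lines'' with the comparison anchored at the $t_1$- (resp.\ $t_2$-) slice. It does not, because the second endpoint $\varphi_2(q_2)=(q_2,t_2)$ is not on the $t_1$-equator, and the anchored comparison only controls the distance from an equator point to an arbitrary point of the suspension. In the round model every point is within $D_1/2$ of the two poles, so as soon as $d_{g_1}(q_1,q_2)>D_1/2$ the anchored estimate cannot produce anything better than $D_1/2$: a curve that exits the band $|t-t_1|<D_1/2$ is only forced to spend length $D_1/2$ (it need not return to the $t_1$-slice, unlike in the equator-to-equator case), and for a curve staying in the band the spherical law of cosines gives $\arccos\bigl(\sin s_2\,\cos\theta\bigr)$, which is strictly less than $\theta$ once $\theta>\pi/2$ and $s_2\neq\pi/2$. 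So your two cases yield only $d_{M'}(\varphi_1(q_1),\varphi_2(q_2))\ge\min\{d_{g_1}(q_1,q_2),\,D_1/2\}$, not \eqref{extra-edges}. A correct proof of \eqref{extra-edges} must use the hemispherical lower bounds at both slices simultaneously (and tacitly some compatibility between $g_1,g_2,D_1,D_2,t_2-t_1$, as in the situations where the proposition is applied): for instance, when the two bands cover $[t_1,t_2]$ and the bound coming from the $t_2$-side dominates the folded $t_1$-model on the upper half, the $1$-Lipschitz folding map $(x,t)\mapsto(x,\psi(t))$, with $\psi(t)=t$ below the midpoint and $\psi(t)=t_1+(t_2-t)$ above it, sends both endpoints to the $t_1$-equator and reduces \eqref{extra-edges} to the equatorial case you already proved. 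Without an argument of this kind --- i.e.\ one that genuinely involves both slice constraints --- the ``same lines'' claim is the step that fails, and in fact the single-anchor estimate alone cannot close it whenever $d_{g_i}(q_1,q_2)>D_i/2$.
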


\begin{proof}
	See ~\cite[Proposition 4.2]{Lakzian-Sormani}.
\end{proof}

\begin{prop}[~\cite{Lakzian-Sormani}] \label{prop-squeeze-Z}
Suppose $M_1=(M,g_1)$ and $M_2=(M,g_2)$ are diffeomorphic
oriented precompact
Riemannian manifolds and suppose there exists $\epsilon>0$ such that
\be
g_1(V,V) < (1+\epsilon)^2 g_2(V,V) \textrm{ and }
g_2(V,V) < (1+\epsilon)^2 g_1(V,V) \qquad \forall \, V \in TM.
\ee
Then for any
\be
a_1> \frac{\arccos(1+\epsilon)^{-1} }{\pi}\diam(M_2)
\ee
and
\be
a_2> \frac{\arccos(1+\epsilon)^{-1}}{\pi} \diam(M_1),
\ee
there is a pair of distance preserving embeddings
$\varphi_i:M_i \to M'=\bar{M} \times [t_1, t_2]$ with a metric
as in Proposition~\ref{prop-hem}
where $t_2-t_1\ge \max\left\{a_1, a_2\right\}$.

In fact, the metric $g'$
on $M'$ can be chosen so that
\be \label{star-1}
g'(V,V) \le dt^2(V,V) +  g_1(V,V)+ g_2(V,V) \qquad \forall \, V \in TM'.
\ee

Thus the Gromov-Hausdorff distance
between the metric completions is bounded,
\be \label{sq-Z-GH}
d_{GH}(\bar{M}_1, \bar{M}_2 ) \le a:=\max\left\{a_1, a_2\right\},
\ee
and the intrinsic flat distance
between the settled completions are bounded,
\be\label{77}
d_{\mathcal{F}}(M'_1, M'_2) \le a\left(V_1+ V_2 + A_1+A_2\right),
\ee
\end{prop}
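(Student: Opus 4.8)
The plan is to realize $M_1$ and $M_2$ as the bottom and top slices of a single warped product $(\bar M \times [t_1, t_2], g')$ on which the inclusions $\varphi_i(p) = (p, t_i)$ are distance preserving, and then to read the Gromov--Hausdorff and intrinsic flat bounds off the geometry of this product. After a translation we may take $t_1 = 0$, and it suffices to treat $t_2 = a := \max\{a_1, a_2\}$: for a larger $t_2$ one glues on the flat piece $(\bar M \times [a, t_2], dt^2 + g_2)$, which changes nothing. Put $\theta := \arccos\bigl((1+\epsilon)^{-1}\bigr) \in (0, \pi/2)$, so that the bi-Lipschitz hypothesis reads $g_j \ge (1+\epsilon)^{-2} g_i = \cos^2\theta\, g_i$ for $\{i,j\} = \{1,2\}$. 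Fix $D_i \ge \diam_{g_i}(M)$, close enough to the diameter that $\pi a/D_1 > \theta$ and $\pi a/D_2 > \theta$ still hold; this is possible since the hypotheses give $a \ge a_2 > \tfrac{\theta}{\pi}\diam(M_1)$ and $a \ge a_1 > \tfrac{\theta}{\pi}\diam(M_2)$, with strict inequalities.

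First I would commit to the ansatz
\be\label{eq-plan-gprime}
	g' := dt^2 + \mu_1(t)\, g_1 + \mu_2(t)\, g_2 \qquad \text{on } M' := \bar M \times [0,a],
\ee
with smooth $\mu_1, \mu_2 : [0,a] \to [0,1]$ satisfying $\mu_1(0) = 1$, $\mu_2(0) = 0$, $\mu_1(a) = 0$, $\mu_2(a) = 1$. Then $g'$ restricts to $dt^2 + g_i$ on $\bar M \times \{t_i\}$, so \eqref{eq-prop-hem} holds, and $\mu_i \le 1$ immediately gives $g' \le dt^2 + g_1 + g_2$, which is \eqref{star-1}. Since $g_2 \ge \cos^2\theta\, g_1$, one has $\mu_1 g_1 + \mu_2 g_2 \ge (\mu_1 + \cos^2\theta\, \mu_2) g_1$, so the hemispherical lower bound \eqref{ineq-prop-hem} for $\varphi_1$ (at $t_1 = 0$, parameter $D_1$) is implied by the scalar inequality
\be\label{eq-plan-a}
	\mu_1(t) + \cos^2\theta\, \mu_2(t) \ \ge\ \cos^2(\pi t/D_1) \qquad \text{for } 0 \le t < D_1/2,
\ee
and, symmetrically (using $g_1 \ge \cos^2\theta\, g_2$), \eqref{ineq-prop-hem} for $\varphi_2$ (at $t_2 = a$, parameter $D_2$) is implied by
\be\label{eq-plan-b}
	\mu_2(t) + \cos^2\theta\, \mu_1(t) \ \ge\ \cos^2(\pi (a-t)/D_2) \qquad \text{for } a - D_2/2 < t \le a.
\ee
Once \eqref{eq-plan-a}--\eqref{eq-plan-b} are arranged, \propref{prop-hem} (with $\diam_{g_i}(M) \le D_i$) yields that both $\varphi_i : M_i \to (M', g')$ are distance preserving embeddings with $g'$ of the required form.

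The heart of the proof is the \textbf{construction of admissible profiles $\mu_1, \mu_2$}, and this is precisely where the threshold $a > \tfrac{\theta}{\pi}\max\{D_1, D_2\}$ is used; indeed it is forced, since \eqref{eq-plan-b} at $t = 0$ reads $\cos^2\theta \ge \cos^2(\pi a/D_2)$, i.e.\ $\pi a/D_2 \ge \theta$, and \eqref{eq-plan-a} at $t = a$ reads $\pi a/D_1 \ge \theta$. Conversely, under the strict inequalities one builds $\mu_1$ equal to $1$ on $[0, \theta D_1/\pi]$ and decreasing to $0$ on $[\theta D_1/\pi, a]$, and $\mu_2$ vanishing at $t = 0$, increasing, and eventually equal to $1$, using the hemispherical templates $\cos^2(\pi t/D_1)$ and $\cos^2(\pi(a-t)/D_2)$ to shape the decrease and the increase. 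The subtle point --- which I would carry out exactly as in \cite{Lakzian-Sormani} --- is the range of $t$ in which \emph{both} windows $[0, D_1/2)$ and $(a - D_2/2, a]$ are active: there the naive choice $\mu_1 + \mu_2 \equiv 1$ (equivalently $g' = dt^2 + \cos^2\psi\, g_1 + \sin^2\psi\, g_2$) fails, since at the balancing value of $t$ it would demand the false inequality $\cos^2\theta \ge \cos\theta$; the fix is to allow $\mu_1 + \mu_2 > 1$ on that overlap (each factor still $\le 1$, so \eqref{star-1} is unharmed) and to use that both cosine templates are $\le \cos^2\theta$ there, so the slack terms $\cos^2\theta\, \mu_j$ in \eqref{eq-plan-a}--\eqref{eq-plan-b} cover the deficit. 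I expect this case analysis to be the only genuine obstacle; everything else is formal.

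It remains to extract the two distance estimates from $(M', g')$. Since $g'(\partial_t, \partial_t) \equiv 1$, each vertical segment $s \mapsto (p, s)$, $s \in [0,a]$, has $g'$-length exactly $a$, so $\varphi_1(\bar M_1) = \bar M \times \{0\}$ and $\varphi_2(\bar M_2) = \bar M \times \{a\}$ lie within Hausdorff distance $a$ of one another in $(M', g')$; as the $\varphi_i$ are distance preserving this gives $d_{GH}(\bar M_1, \bar M_2) \le a$, i.e.\ \eqref{sq-Z-GH}. For \eqref{77} I would invoke the Stokes-type estimate \eqref{est-int-flat} with the common target $(M', g')$, filling region $B := M'$, and excess boundary $A := \partial\bar M \times [0,a]$ (empty if $M$ is closed): for every $m$-form $\omega$ on $M'$, Stokes' theorem gives \eqref{Stokes} up to orientation signs, because the boundary of the fundamental $(m+1)$-current of $M'$ is $\varphi_{1\#}T_1 - \varphi_{2\#}T_2$ together with the $m$-current carried by $A$. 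Hence $d_{\mathcal F}(M_1', M_2') \le \vol_m(A) + \vol_{m+1}(M', g')$, and, integrating the slice volumes and using $g' \le dt^2 + g_1 + g_2$ together with the support structure of $\mu_1, \mu_2$ (arranged so that on each slice essentially only one of $g_1, g_2$ contributes to the $m$-volume), one gets $\vol_{m+1}(M', g') \le a(V_1 + V_2)$ and $\vol_m(A) \le a(A_1 + A_2)$, which is \eqref{77}. The only work in this last step is the volume bookkeeping, once more as in \cite{Lakzian-Sormani}.
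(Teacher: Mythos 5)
The paper gives no argument for this proposition beyond the citation to \cite{Lakzian-Sormani}, and your outline is indeed the intended construction: an interpolating metric $g'=dt^2+\mu_1(t)\,g_1+\mu_2(t)\,g_2$ on $\bar M\times[0,a]$ checked against Proposition~\ref{prop-hem}, vertical segments of length $a$ for \eqref{sq-Z-GH}, and the filling/excess-boundary estimate for the flat distance. Your reduction to the two scalar inequalities is correct, as is the observation that $a>\frac{\theta}{\pi}\max\{D_1,D_2\}$ is exactly the relevant threshold. However, the two steps you declare routine are where the proof actually lives, and as written they do not close. For the profile construction, your justification of the overlap case is wrong as stated: on the overlap of the windows $[0,D_1/2)$ and $(a-D_2/2,a]$ the two cosine templates are \emph{not} both $\le\cos^2\theta$ (at $t=0$ the first template equals $1$). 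What is true, and what the cross-paired hypotheses $a\ge a_2>\frac{\theta}{\pi}\diam(M_1)$, $a\ge a_1>\frac{\theta}{\pi}\diam(M_2)$ encode, is that the first template exceeds $\cos^2\theta$ only for $t<\theta D_1/\pi<a$ and the second only for $t>a-\theta D_2/\pi>0$; one may then take $\mu_1\equiv1$ on the first range, $\mu_2\equiv1$ on the second, and $\mu_1+\mu_2\ge1$ in between, since $\mu_1+\cos^2\theta\,\mu_2\ge\cos^2\theta(\mu_1+\mu_2)\ge\cos^2\theta$ covers the template wherever $\mu_1<1$. Deferring this, the only genuinely nontrivial verification, to the citation leaves the embedding step unproved in your write-up.

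The more serious gap is \eqref{77}. The bound $\vol_{m+1}(M',g')\le a(V_1+V_2)$ does not follow from \eqref{star-1}: volume forms are superadditive, not subadditive, in the metric (Minkowski's determinant inequality gives $\det(g_1+g_2)^{1/2}\ge\det(g_1)^{1/2}+\det(g_2)^{1/2}$), so $\mu_i\le1$ alone yields nothing. Moreover your parenthetical "on each slice essentially only one of $g_1,g_2$ contributes" is inconsistent with your own fix of the overlap case, which makes both $\mu_1$ and $\mu_2$ large (you allow both $=1$) on an interval that, near the sharp threshold, is a definite fraction of $[0,a]$; on such slices the metric is essentially $g_1+g_2$, whose slice volume is about $2^{m/2}V_1$ when $g_1\approx g_2$, which exceeds $V_1+V_2$ for $m>2$ and makes the integrated bound \eqref{77} fail outright for larger $m$. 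In other words, there is a genuine tension between the lower bounds needed for Proposition~\ref{prop-hem} and the upper bound needed for the volume count, and resolving it requires choosing $\mu_1,\mu_2$ essentially minimally and estimating $\det(\mu_1g_1+\mu_2g_2)$ using the bi-Lipschitz hypothesis $g_2\le(1+\epsilon)^2g_1$ (and similarly for the side boundary term $a(A_1+A_2)$). That bookkeeping is precisely the content of the cited proof and is not "formal"; as proposed, your argument establishes the embeddings and \eqref{sq-Z-GH} only in outline and does not establish \eqref{77}.
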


\begin{proof}
	See ~\cite[Lemma 4.5]{Lakzian-Sormani}.
\end{proof}

\begin{thm} [~\cite{Lakzian-Sormani}]\label{thm-sub-diffeo}
Suppose $M_1=(M,g_1)$ and $M_2=(M,g_2)$ are oriented
precompact Riemannian manifolds
with diffeomorphic subregions $U_i \subset M_i$ and
diffeomorphisms $\psi_i: U \to U_i$ such that
\be \label{thm-sub-diffeo-1}
\psi_1^*g_1(V,V)
< (1+\epsilon)^2 \psi_2^*g_2(V,V) \qquad \forall \, V \in TU
\ee
and
\be \label{thm-sub-diffeo-2}
\psi_2^*g_2(V,V) <
(1+\epsilon)^2 \psi_1^*g_1(V,V) \qquad \forall \, V \in TU.
\ee
Taking the extrinsic diameter,
\be \label{DU}
D_{U_i}= \sup\{\diam_{M_i}(W): \, W\textrm{ is a connected component of } U_i\} \le \diam(M_i),
\ee
we define a hemispherical width,
\be \label{thm-sub-diffeo-3}
a>\frac{\arccos(1+\epsilon)^{-1} }{\pi}\max\{D_{U_1}, D_{U_2}\}.
\ee
Taking the difference in distances with respect to the outside manifolds,
\be \label{lambda}
\lambda=\sup_{x,y \in U}
|d_{M_1}(\psi_1(x),\psi_1(y))-d_{M_2}(\psi_2(x),\psi_2(y))|,
\ee
we define heightS,
\be \label{thm-sub-diffeo-4}
h =\sqrt{\lambda ( \max\{D_{U_1},D_{U_2}\} +\lambda/4 )\,}
\ee
and
\be \label{thm-sub-diffeo-5}
\bar{h}= \max\{h,  \sqrt{\epsilon^2 + 2\epsilon} \; D_{U_1}, \sqrt{\epsilon^2 + 2\epsilon} \; D_{U_2} \}.
\ee
Then the Gromov-Hausdorff distance between the metric
completions is bounded,
\be \label{thm-sub-diffeo-6}
d_{GH}(\bar{M}_1, \bar{M}_2 ) \le a + 2\bar{h} +
\max\left\{ d^{M_1}_H(U_1, M_1), d^{M_2}_H(U_2, M_2)\right\}
\ee
and the Intrinsic Flat distance between the
settled completions is bounded,
\begin{eqnarray*}
d_{\mathcal{F}}(M'_1, M'_2) &\le&
\left(\bar{h} + a\right) \left(
\vol_m(U_{1})+\vol_m(U_2)+\vol_{m-1}(\partial U_{1})+\vol_{m-1}(\partial U_{2})\right)\\
&&+\vol_m(M_1\setminus U_1)+\vol_m(M_2\setminus U_2).
\end {eqnarray*}
\end{thm}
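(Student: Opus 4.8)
The plan is to construct a single complete metric space $Z$ together with distance-preserving embeddings $\varphi_i\colon\bar M_i\to Z$ ($i=1,2$) whose images are glued to one another along the subregions $U_1,U_2$ through an explicit ``bridge,'' and then to extract the two conclusions from the geometry of $Z$: the Gromov--Hausdorff estimate directly from distances in $Z$, and the intrinsic flat estimate by producing a filling $(m+1)$-current $B$ and an excess boundary $m$-current $A$ in $Z$ with $\varphi_{1\#}T_1-\varphi_{2\#}T_2=A+\partial B$, where $T_i$ denotes the integral current structure of $M_i$, and then invoking $d_{\mathcal F}(M_1',M_2')\le\vol_m(A)+\vol_{m+1}(B)$ from (\ref{est-int-flat}).

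First I would use $\psi_1,\psi_2$ to identify both $U_1$ and $U_2$ with a single abstract manifold $U$, which then carries two Riemannian metrics $\psi_1^*g_1,\psi_2^*g_2$ that are $(1+\epsilon)^2$-biLipschitz to each other by (\ref{thm-sub-diffeo-1})--(\ref{thm-sub-diffeo-2}). On $\bar U\times[0,a+2\bar h]$, with $a$ and $\bar h$ as in (\ref{thm-sub-diffeo-3}) and (\ref{thm-sub-diffeo-5}), I would put a metric $g'$ that on the central subinterval of length $a$ is the hemispherical ``squeezing'' metric supplied by Propositions~\ref{prop-hem} and~\ref{prop-squeeze-Z} interpolating between $\psi_1^*g_1$ and $\psi_2^*g_2$ --- so that, as in (\ref{77}), the $m$-volume of each slice is controlled by $\vol_m(U_1)+\vol_m(U_2)$ and the $(m-1)$-volume of the slice boundary by $\vol_{m-1}(\partial U_1)+\vol_{m-1}(\partial U_2)$ --- and on the two collars of height $\bar h$ at the ends is the product metric $dt^2+\psi_1^*g_1$, respectively $dt^2+\psi_2^*g_2$. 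I then set
\[
Z:=\bar M_1\ \cup_{\,U_1=\bar U\times\{0\}}\ \bigl(\bar U\times[0,a+2\bar h],\,g'\bigr)\ \cup_{\,\bar U\times\{a+2\bar h\}=U_2}\ \bar M_2,
\]
with the induced length metric, and let $\varphi_i$ be the two natural inclusions.

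The heart of the proof is to show that each $\varphi_i$ is distance preserving. The inequality $d_Z(\varphi_i(p),\varphi_i(q))\le d_{M_i}(p,q)$ is immediate, because curves in $M_i$ are curves in $Z$ of the same length. For the reverse inequality one must exclude shortcuts: a curve in $Z$ joining two points of $\bar M_i$ may leave $\bar M_i$ through $U_i$, ascend into the bridge, possibly cross all the way into $\bar M_{3-i}$, and re-enter through $U_i$. Since every such curve enters and exits the bridge through $U_i$, and since the inclusion $U_i\hookrightarrow\bar M_i$ preserves distances, it suffices to check that the bridge itself does not shorten distances between points of its two end slices, namely $d_Z(\psi_i(x),\psi_i(y))\ge d_{M_i}(\psi_i(x),\psi_i(y))$ for all $x,y\in U$. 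There Proposition~\ref{prop-hem} does the work in the central region: its hemispherical metric forces any excursion out of an end slice, through the middle, and back to have length at least that of the corresponding minimal $M_i$-geodesic, as long as that geodesic has length at most $D_{U_i}$. The collar height $\bar h$ is then calibrated so that the vertical cost of a detour through a collar always dominates any horizontal gain --- both the gain of at most $\lambda$ available by measuring a pair of points with $d_{M_2}$ rather than $d_{M_1}$, for which an elementary Pythagorean optimization over the depth of the detour produces exactly $h=\sqrt{\lambda(\max\{D_{U_1},D_{U_2}\}+\lambda/4)}$ as in (\ref{thm-sub-diffeo-4}), and the gain of size $\sqrt{\epsilon^2+2\epsilon}\,D_{U_i}$ coming from the biLipschitz slack $\epsilon$, which accounts for the remaining terms in (\ref{thm-sub-diffeo-5}). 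I expect this case analysis of possible shortcuts to be the main obstacle; the rest is bookkeeping.

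Granting distance preservation, the Gromov--Hausdorff bound (\ref{thm-sub-diffeo-6}) follows at once: each point of $\bar M_i$ lies within $d^{M_i}_H(U_i,M_i)$ of the end slice to which it is glued, and the two end slices lie within the bridge's $t$-extent $a+2\bar h$ of each other. For the flat bound I take $B$ to be the integral $(m+1)$-current carried by the oriented bridge $\bar U\times[0,a+2\bar h]$; since the collars are products, each local to one end, $\vol_{m+1}(B)\le\bar h\,\vol_m(U_1)+a\bigl(\vol_m(U_1)+\vol_m(U_2)\bigr)+\bar h\,\vol_m(U_2)\le(\bar h+a)\bigl(\vol_m(U_1)+\vol_m(U_2)\bigr)$, the middle estimate being (\ref{77}). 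With the bridge oriented appropriately, $\partial B=\varphi_{1\#}(T_1\rstr U_1)-\varphi_{2\#}(T_2\rstr U_2)+S$, where $S$ is the integral current carried by the side boundary $\partial\bar U\times[0,a+2\bar h]$ and has mass at most $(\bar h+a)\bigl(\vol_{m-1}(\partial U_1)+\vol_{m-1}(\partial U_2)\bigr)$. Because the end slices are literally identified in $Z$ with $U_1\subset\bar M_1$ and $U_2\subset\bar M_2$, the current $A:=\varphi_{1\#}T_1-\varphi_{2\#}T_2-\partial B$ reduces to $\varphi_{1\#}(T_1\rstr(M_1\setminus U_1))-\varphi_{2\#}(T_2\rstr(M_2\setminus U_2))-S$, of mass at most $\vol_m(M_1\setminus U_1)+\vol_m(M_2\setminus U_2)+(\bar h+a)\bigl(\vol_{m-1}(\partial U_1)+\vol_{m-1}(\partial U_2)\bigr)$. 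Since $\varphi_{1\#}T_1-\varphi_{2\#}T_2=A+\partial B$ by construction, (\ref{est-int-flat}) gives $d_{\mathcal F}(M_1',M_2')\le\vol_m(A)+\vol_{m+1}(B)$, which adds up to precisely the asserted estimate.
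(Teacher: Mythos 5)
Your proposal is correct and follows essentially the same route as the cited proof in \cite{Lakzian-Sormani}, which the paper also reproduces in adapted form in the proof of Theorem~\ref{thm-sub-diffeo-new}: one builds a hemispherical filling bridge of width $a$ via Propositions~\ref{prop-hem} and~\ref{prop-squeeze-Z}, extends it by product collars of height $\bar h$ on either end so that the collar cost dominates both the distortion $\lambda$ and the biLipschitz slack $\epsilon$, glues $\bar M_1$ and $\bar M_2$ to the ends to obtain a common length space $Z$ with distance-preserving embeddings, and then reads off the Gromov--Hausdorff bound from the $t$-extent plus Hausdorff defect and the flat bound by taking the bridge as the filling current $B$ and the complements $M_i\setminus U_i$ together with the side boundary of the bridge as the excess boundary $A$. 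The decomposition $\varphi_{1\#}T_1-\varphi_{2\#}T_2=A+\partial B$ and the mass estimates you give reproduce exactly the paper's bookkeeping.
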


\begin{proof}
	See ~\cite[Theorem 4.6]{Lakzian-Sormani}.
\end{proof}

%%%%% SUBSECTION: New Estimates %%%%%%%%

\subsection{Adapted Estimates}

Since at the post surgery times our space is an integral current space rather than a manifold, we can not apply the Theorem \ref{thm-sub-diffeo} right away. The integral current space we study, possesses nice properties that will allow us to apply a refined version of the Theorem \ref{thm-sub-diffeo}. This section is devoted to prove this refined version of the estimate on the SWIF distance. 

\begin{defn}\label{defn-related-metrics}
	Let $(M , g)$ be a Riemannian manifold (possibly disconnected and with boundary). Let $d : M \times M \to \R$ be a metric on $M$. We say that the metric $d$ is \textbf{related} to the Riemannian metric $g$ if for any smooth curve $C:(-r,r) \to M$, we have
\be
g \left( C'(0), C'(0) \right) = \left( \frac{d}{dt}\big|_{t=0} \;  d \left( C(t),C(0) \right) \right)^2.
\ee

\end{defn}

\begin{lem}\label{lem-related-metrics}
	Let $(M , g)$ be a Riemannian manifold (possibly disconnected and with boundary). Let $d : M \times M \to \R$ be a metric on $M$. If $d$ is related to $g$, then for any smooth curve $C:(-r,r) \to M$, we have
\be
	L_{g_i}(C)=L_{d_i}(C). 	
\ee
\end{lem}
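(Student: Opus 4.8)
The plan is to reduce the statement to the definition of length via the pointwise relation between $g$ and $d$. Recall that for a smooth curve $C \colon (-r,r) \to M$, the $g$-length is $L_g(C) = \int_{-r}^{r} \sqrt{g(C'(s), C'(s))}\, ds$, while the $d$-length is $L_d(C) = \sup \sum_i d(C(s_{i-1}), C(s_i))$, the supremum being over partitions of the parameter interval. The key observation is that the hypothesis ``$d$ is related to $g$'' forces the metric differential of $C$ with respect to $d$ to coincide with $\sqrt{g(C', C')}$ at every parameter value: applying \defref{defn-related-metrics} at an arbitrary point $s_0$ (using the reparametrized curve $t \mapsto C(s_0 + t)$) gives
\be
g\bigl(C'(s_0), C'(s_0)\bigr) = \left( \frac{d}{dt}\Big|_{t=0} d\bigl(C(s_0 + t), C(s_0)\bigr) \right)^2 .
\ee

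First I would establish the local metric-derivative identity above, taking care that the right-hand side is well defined, i.e. that $t \mapsto d(C(s_0+t), C(s_0))$ is differentiable at $t=0$ with the stated value; since $C$ is smooth and $d$ is continuous, and the squared quantity is what appears, one should phrase this as $\lim_{t \to 0} \frac{d(C(s_0+t), C(s_0))}{|t|} = \sqrt{g(C'(s_0), C'(s_0))}$, which is exactly the content of being related. Next I would invoke the standard fact from metric geometry that if a curve $C$ in a metric space has a metric speed $v(s) := \lim_{t \to 0} d(C(s+t), C(s))/|t|$ existing for a.e. $s$ and $v$ is integrable, then the $d$-length is $L_d(C) = \int v(s)\, ds$ (this is the length–speed formula, e.g. in Burago–Burago–Ivanov or Ambrosio–Tilli). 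Here $v(s) = \sqrt{g(C'(s), C'(s))}$ is continuous, hence integrable on the compact subintervals, so $L_d(C) = \int \sqrt{g(C'(s),C'(s))}\, ds = L_g(C)$, which is the claim. One subtlety worth a sentence: on a manifold with boundary or a disconnected manifold the statement is purely about curves lying in a single chart-sized piece of $M$, so no connectivity issue arises; and the lemma as stated concerns curves on $(-r,r)$, so one may simply work on any compact $[a,b] \subset (-r,r)$ and let $[a,b] \nearrow (-r,r)$.

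The main obstacle I anticipate is not conceptual but one of rigor in the length–speed formula: proving $L_d(C) = \int v$ requires both inequalities. The inequality $L_d(C) \le \int v$ follows from $d(C(s_{i-1}), C(s_i)) \le \int_{s_{i-1}}^{s_i} (\text{upper bound on speed})$, which needs a uniform control obtained from smoothness of $C$ and the relation hypothesis — essentially a mean-value-type estimate $d(C(s), C(s')) \le \int_s^{s'} v$, which itself must be derived. The reverse inequality $L_d(C) \ge \int v$ is the easier direction, coming from $\sum_i d(C(s_{i-1}),C(s_i)) \approx \sum_i v(\xi_i)\,|s_i - s_{i-1}|$ as the mesh shrinks, using that $v$ is continuous. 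If the paper wishes to avoid citing the general formula, one can instead prove the two inequalities directly using the continuity of $g(C'(\cdot), C'(\cdot))$ and a compactness argument on $[a,b]$; either way this is the technical heart, while everything else is bookkeeping.
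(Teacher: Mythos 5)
Your proposal is correct and follows essentially the same route as the paper, which simply refers the reader to the standard length--speed formula in metric geometry (BBI); you correctly unwind the relatedness hypothesis into the statement that the metric derivative of any smooth curve exists everywhere and equals $v(s)=\sqrt{g(C'(s),C'(s))}$, and you identify the real technical step as proving the mean-value-type estimate $d(C(s),C(s'))\le\int_s^{s'}v$ together with the matching lower bound. The one thing worth fixing is your intermediate statement of the black-box theorem: a.e.\ existence of the metric speed plus integrability does \emph{not} by itself give $L_d(C)=\int v$ (the Cantor staircase in $\R$ is a counterexample), the correct hypothesis is absolute continuity or Lipschitz continuity of $C$ with respect to $d$ --- but the estimate $d(C(s),C(s'))\le\int_s^{s'}v$ that you already flag as needing proof delivers precisely this Lipschitz control, so once that step is carried out the argument closes.
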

\begin{proof}
	Consult any standard text on Metric Geometry for example ~\cite{BBI}.
\end{proof}

\begin{thm} \label{thm-sub-diffeo-new}
Given a pair of geodesic metric spaces $(Y_i, d_i)$, $i=1,2$ , containing integral current spaces $\left( X_i, d_i , T_i \right)$ , $i = 1,2$ with restricted metrics $d_i$, suppose there are  precompact subregions $U_i \subset \operatorname{set}(X_i)$ (possibly disconnected) that are Riemannian manifolds (possibly with boundary) with metrics $g_i$ such that the induced integral current spaces are
\be
	 \left( U_i, d_i , T_i \right) ,  \;\; i =1,2,
\ee
where, the metric $d_i$ on $U_i$ is restricted from $d_i$ on $X_i$ and,
\be
	T_i = \int_{U_i} \;\;, \;\; i=1,2.
\ee
and such that the metric $d_i$ is \textbf{related} (see Definition \ref{defn-related-metrics}) to the Riemannian metric $g_i$  for  $i=1,2.$

Assume there exist diffeomorphisms $\psi_i: U \to U_i$ such that
\be \label{thm-subdiffeo-new-1}
\psi_1^*g_1(V,V)
< (1+\epsilon)^2 \psi_2^*g_2(V,V) \qquad \forall \, V \in TU
\ee
and
\be \label{thm-subdiffeo-new-2}
\psi_2^*g_2(V,V) <
(1+\epsilon)^2 \psi_1^*g_1(V,V) \qquad \forall \, V \in TU.
\ee

We take the following \textbf{extrinsic} diameters,
\be \label{DU-new}
D_{U_i}= \sup \left\{ \diam_{X_i}(W): \, W\textrm{ is a connected component of } U_i\right\} \le \diam(X_i),
\ee
and define a hemispherical width,
\be \label{thm-subdiffeo-new-3}
a>\frac{\arccos(1+\epsilon)^{-1} }{\pi}\max\{D_{U_1}, D_{U_2}\}.
\ee

Let the distance distortion with respect to the outside integral current spaces be
\be \label{lambda-new}
\lambda=\sup_{x,y \in U}
|d_{X_1}(\psi_1(x),\psi_1(y))-d_{X_2}(\psi_2(x),\psi_2(y))|,
\ee
we define heights,
\be \label{thm-subdiffeo-new-4}
h =\sqrt{\lambda ( \max\{D_{U_1},D_{U_2}\} +\lambda/4 )\,}
\ee
and
\be \label{thm-subdiffeo-new-5}
\bar{h}= \max\{h,  \sqrt{\epsilon^2 + 2\epsilon} \; D_{U_1}, \sqrt{\epsilon^2 + 2\epsilon} \; D_{U_2} \}.
\ee

Then, the SWIF distance between the settled completions are bounded above as follows:
\begin{eqnarray*}
d_{\mathcal{F}}(X'_1, X'_2) &\le&
\left(2\bar{h} + a\right) \left(
\vol_m(U_{1})+\vol_m(U_2)+\vol_{m-1}(\partial U_{1})+\vol_{m-1}(\partial U_{2})\right)\\
&& + ||T_1||(X_1\setminus U_1) + ||T_2||(X_2\setminus U_2).
\end {eqnarray*}
\end{thm}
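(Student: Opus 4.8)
The plan is to reduce Theorem \ref{thm-sub-diffeo-new} to Theorem \ref{thm-sub-diffeo} by replacing the abstract integral current spaces $(X_i, d_i, T_i)$ with the genuine Riemannian pieces $U_i$ equipped with metrics comparable to the restricted ones, then doing the same hemispherical embedding construction but now inside $Z = M' = \bar{U} \times [t_1, t_2]$ where $\bar U$ is the (common) metric completion of the domain manifold $U$ and $t_2 - t_1$ is chosen large enough to accommodate both hemispherical fillings and the height $\bar h$. The key point that makes this work, and the reason the \emph{related} hypothesis (Definition \ref{defn-related-metrics}) is imposed, is that on each $U_i$ the restricted metric $d_i$ and the Riemannian length metric induced by $g_i$ agree on lengths of smooth curves (Lemma \ref{lem-related-metrics}), so the embeddings $\varphi_i \colon U_i \to M'$ built from Proposition \ref{prop-hem} are still distance preserving \emph{with respect to $d_i$}, and the pushed-forward currents $\varphi_{i\#} T_i = \varphi_{i\#} \llbracket U_i \rrbracket$ are exactly the integration currents over $\varphi_i(U_i) \subset M'$.

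First I would set up the embeddings. Using $\psi_i$ to identify all the $U_i$ with a single smooth manifold $U$, apply Proposition \ref{prop-hem} (exactly as in the proof of Theorem \ref{thm-sub-diffeo} in \cite{Lakzian-Sormani}) to produce a metric $g'$ on $M' = \bar U \times [t_1,t_2]$ satisfying (\ref{ineq-prop-hem}) and (\ref{eq-prop-hem}) with $D_i = D_{U_i}$, so that each $\varphi_i(p) = (p, t_i)$ satisfies (\ref{star-0}) for $g_i$-geodesics of length $\le D_{U_i}$; because $d_i$ is related to $g_i$, the $g_i$-length metric on each connected component of $U_i$ coincides with $d_i$ there, and the extrinsic diameter bound (\ref{DU-new}) guarantees every pair of points in a component is joined by a curve of length $\le D_{U_i}$, so $\varphi_i$ is in fact $d_i$-distance preserving on each component. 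The inter-component distances and the distance distortion $\lambda$ from (\ref{lambda-new}) are handled by adding "extra edges" of height $\bar h$ exactly as in \cite[Theorem 4.6]{Lakzian-Sormani}: one thickens $M'$ to a space $Z$ by attaching, over pairs of points whose $d_1$- and $d_2$-distances differ or lie in different components, cylindrical or spherical caps of height controlled by $h$ (whence the formula (\ref{thm-subdiffeo-new-4})) and by $\sqrt{\epsilon^2 + 2\epsilon}\, D_{U_i}$ (whence (\ref{thm-subdiffeo-new-5})), so that $\varphi_i \colon (U_i, d_i) \to Z$ becomes distance preserving.

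Next I would assemble the filling. Inside $Z$ the region $\varphi_1(U_1) \cup \varphi_2(U_2)$ together with the hemispherical hypersurfaces swept out between them and the added caps bounds an $(m+1)$-chain $B$, and the mismatch on the boundary $\partial U_i$ contributes an $m$-chain $A$, so that Stokes' formula (\ref{Stokes}) holds for $\varphi_{1\#}T_1 - \varphi_{2\#}T_2$; the parts of $X_i$ outside $U_i$ are simply discarded, contributing their mass $\|T_i\|(X_i \setminus U_i)$ to the excess-boundary term. The volume estimate (\ref{est-int-flat}) then gives $d_{\mathcal F}(X_1', X_2') \le \vol_{m+1}(B) + \vol_m(A)$, and bounding $\vol_{m+1}(B)$ by $(2\bar h + a)(\vol_m(U_1) + \vol_m(U_2)) + (2\bar h + a)(\vol_{m-1}(\partial U_1) + \vol_{m-1}(\partial U_2))$ — the hemispherical slab over $U_i$ contributing the $a$-term, the height-$\bar h$ caps over $U_i$ and over $\partial U_i$ contributing the $\bar h$-terms — and $\vol_m(A)$ by $\|T_1\|(X_1 \setminus U_1) + \|T_2\|(X_2 \setminus U_2)$ yields the claimed inequality.

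The main obstacle, and the only place where this differs from \cite[Theorem 4.6]{Lakzian-Sormani}, is verifying that the embeddings remain distance preserving for the \emph{restricted} metrics $d_i$ rather than the intrinsic Riemannian metrics — i.e. that replacing $d_{M_i}$ by $d_{X_i}$ throughout the argument is legitimate. This is exactly what the related-metric hypothesis buys us via Lemma \ref{lem-related-metrics}: lengths of smooth curves agree, hence the length (= intrinsic) metric of $U_i$ induced by $g_i$ agrees with $d_i$ on each connected component, so Proposition \ref{prop-hem}'s conclusion (\ref{star-0}), stated for $g_i$-geodesics, transfers verbatim to $d_i$-distances; the extrinsic quantities $D_{U_i}$ and $\lambda$ are then measured in $d_{X_i}$ as in (\ref{DU-new}) and (\ref{lambda-new}), and the rest of the construction of $Z$, $A$, $B$ goes through unchanged. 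One should also check that $\varphi_{i\#}\llbracket U_i \rrbracket$ is a well-defined integral current in $\mathbf I_m(Z)$ with mass $\vol_m(U_i)$ and that cutting back to the settled completion does not increase the flat distance, both of which are standard consequences of the Ambrosio–Kirchheim theory once the $\varphi_i$ are bi-Lipschitz (here isometric) onto their images.
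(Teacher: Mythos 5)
Your overall route is the same as the paper's: reduce to the manifold argument of \cite[Theorem 4.6]{Lakzian-Sormani} by using the \emph{related}-metrics hypothesis so that the hemispherical constructions of Propositions~\ref{prop-hem} and \ref{prop-squeeze-Z} transfer from the Riemannian lengths of $g_i$ to the restricted metrics $d_i$ on each connected component of $U_i$. That much matches, and your identification of the related-metrics condition as the crucial ingredient is right.

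There is, however, a genuine gap in the construction of the ambient space $Z$. You take $Z$ to be $M' = \bar{U}\times[t_1,t_2]$ thickened by caps and cylinders over $U$ and $\partial U$. Such a $Z$ contains isometric copies of $U_1$ and $U_2$, but it does \emph{not} contain copies of $\bar{X}_1$ or $\bar{X}_2$. Recall that the SWIF distance is defined through distance-preserving embeddings $\varphi_i:\bar X_i\to Z$ of the \emph{entire} completions and a filling identity $\varphi_{1\#}T_1-\varphi_{2\#}T_2 = A + \partial B$; the currents $T_i$ are supported on all of $\bar X_i$, so $\varphi_i$ must be defined there, and $\varphi_i(X_i\setminus U_i)$ is precisely what furnishes the $\|T_i\|(X_i\setminus U_i)$ contribution to the excess-boundary current $A$. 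Saying those parts are ``simply discarded'' while still charging their mass to $A$ is contradictory: they must live inside $Z$ to appear in $A$. The paper handles this by gluing $\bar X_1$ and $\bar X_2$ to the far ends of the extended filling bridge $X''$ along the isometric images of $U_1,U_2$, producing $Z = \bar{X}_1 \disjointunion_{U_1} X'' \disjointunion_{U_2} \bar{X}_2$, and the hypotheses that the $Y_i$ are geodesic spaces and the $U_i$ are manifolds are used exactly here to make the gluing and the resulting distance-preserving property of $\varphi_i:Y_i\to Z$ work (that is also where $\bar h$ enters, by comparison of curves passing through $X''$ versus curves staying inside a single $\bar X_i$). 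You would need to add this gluing step explicitly; without it the Stokes identity for $\varphi_{1\#}T_1 - \varphi_{2\#}T_2$ is not even meaningful.

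Two minor points: you correctly state the extra hypothesis needed to make $\varphi_i$ distance preserving for the \emph{restricted} $d_i$, but the verification of that should be for the full map $\bar X_i\to Z$ (after gluing), not only on $U_i$; and the ``caps of height $\bar h$'' you describe are really the cylindrical collars $U\times[0,\bar h]$ attached to either side of the bridge before the gluing, not a separate thickening of $M'$.
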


\begin{proof}
The theorem begins exactly as in the proof of thm-subdiffeo in ~\cite{Lakzian-Sormani} with a construction of an ambient space $Z$.

For every pair of corresponding diffeomorphic connected components $U_i^\beta$
of $U_i$, we can create a hemispherically defined filling bridge
$X'_\beta$ diffeomorphic to $U_i^{\beta_i} \times [0,a]$ with metric $g'_\beta$ satisfying (\ref{star-0})
by applying Proposition~\ref{prop-hem} and Proposition~\ref{prop-squeeze-Z} 
using the $a_i=a_i(\beta)$ defined there for the particular connected
component, $U_i^\beta$ and $D_i=D_{U_i}$.  Observe that all $a_i \le a$,
so $|t_1-t_2|=a$ will work for all the connected
components.   Any minimal geodesic $\gamma: [0,1]\to U_i^\beta$
of length $\le D_{U_i}\le \diam_{X_i}(U_i)$ satisfies (\ref{star-0}).

Let $X'$ be the disjoint unions of these bridges. $X'$ has a metric
$g'$ satisfying (\ref{star-1}). The boundary of $X'$ is $(U,g_1) \cup (U, g_2) \cup (\partial U \times [0 , a] , g')$.  Therefore,
\begin{eqnarray}\label{sdB}
\vol_m(X')&=&\sum_\beta \vol_m(X_\beta')\\
& \le& \sum_\beta  a(\vol_m(U_1^\beta)+\vol_m(U_2^\beta))\\
& \le& a (\vol_m(U_{1})+\vol_m(U_{2}))
\end{eqnarray}
and
\be\label{sdA}
\vol_{m}\left(\partial X'\setminus (\varphi_1(U_1)\cup
\varphi_2(U_2)\right) 
\le a \; (\vol_{m-1}(\partial U_1) + \vol_{m-1}(\partial U_2))
\ee
as in Proposition~\ref{prop-squeeze-Z}.

Since our regions are not necessarily convex, we cannot directly glue $X_i$ to $X'$ in order to obtain a distance preserving embedding.
We first need to glue isometric products
$U^\beta \times [0, \bar{h}]$ with cylinder metric $dt^2 +g_i$ to both ends of the filling bridges, to have all the bridges extended
by an equal length on either side.  This creates a Lipschitz manifold,
\be
X''=(U_1\times[0,\bar{h}])
\disjointunion_{U_1} X' \disjointunion_{U_2} (U_2 \times [0,\bar{h}]).
\ee
We then define $\varphi_i: U_i \to X''$ such that
\begin{eqnarray}
\varphi_1(x)&=&(x,0)\in U_1\times [0, \bar{h}]\\
\varphi_2(x)&=&(x,\bar{h})\in U_2\times [0, \bar{h}]
\end{eqnarray}

Then by (\ref{sdB}) and (\ref{sdA}), we have
\begin{eqnarray}\label{subdiffeo-B}
\vol_{m+1}(X'')&=& \vol_{m+1}(X') + \bar{h} (\vol_m(U_1) +\vol_m(U_2))\\
& \le& (a+ \bar{h}) (\vol_m(U_{1})+\vol_m(U_{2}))
\end{eqnarray}
and $\vol_{m}\left(\partial X''\setminus (\varphi_1(U_1)\cup
\varphi_2(U_2)\right)=$
\begin{eqnarray} \label{subdiffeo-A}
\qquad
&=&
\vol_{m}\left(\partial X'\setminus (\varphi_1(U_1)\cup
\varphi_2(U_2)\right)+
 \bar{h} (\vol_{m-1}(\partial U_1) +\vol_{m-1}(\partial U_2))\\
 &\le& (a+\bar{h}) (\vol_{m-1}(\partial U_1) + \vol_{m-1}(\partial U_2)).
\end{eqnarray}

Finally we glue $Y_1$ and $Y_2$
to the far ends of $X''$ along $\varphi_i(U_i)$ 
to create a connected length space. This is possible since $U_i$'s are manifolds and $Y_i$s are geodesic spaces.
\be
Z = \bar{X}_1 \disjointunion_{U_1} X'' \disjointunion_{U_2} \bar{X}_2
\ee
As usual, distances in $Z$ are defined by taking the infimum of
lengths of curves.   See Figure~\ref{fig-subdiffeo-new-1}.
Each connected component, $X''_\beta$ of $X''$ will be called the
filling bridge corresponding to $U^\beta$.

\begin{figure}[h] %  figure placement: here, top, bottom, or page
   \centering
   \includegraphics[width=5in]{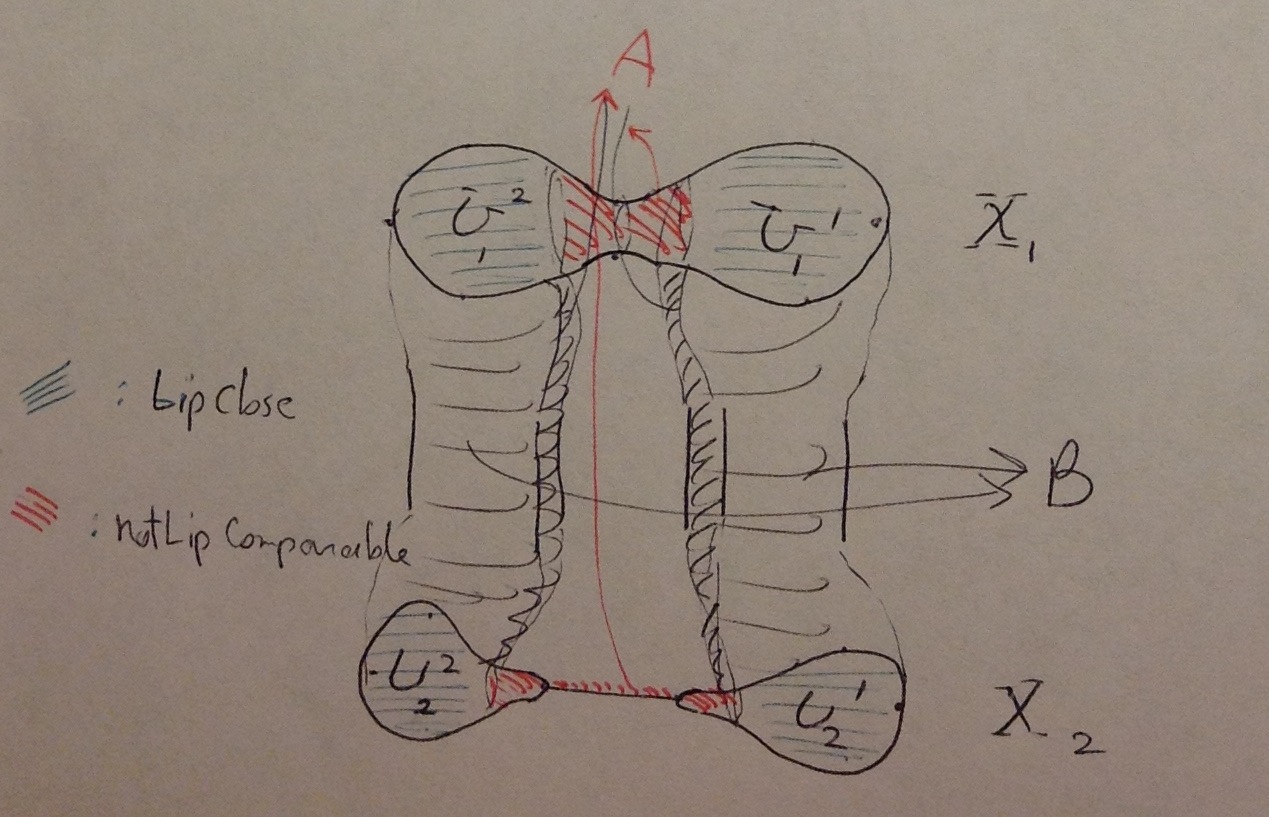}
   \caption{Creating $Z$ for Theorem~\ref{thm-sub-diffeo-new}.}
   \label{fig-subdiffeo-new-1}
\end{figure}

In the proof of Theorem 4.6 in ~\cite{Lakzian-Sormani}, it
is proven that $\varphi_1: Y_1 \to Z$ mapping $Y_1$ into
its copy in $Z$ is a distance preserving embedding. The proof there is given for manifolds but it can be easily adapted to our case since it only relies on the fact that our spaces are geodesic spaces and the fact that $g_i$ and $d_i$ are \textbf{related} (see Definition \ref{defn-related-metrics} ) on $U_i$s and both conditions are satisfied in our case.  The same argument shows that $\varphi_2:Y_2 \to Z$ is also a distance preserving embedding.

%It is here that we use $\bar{h}$ instead of $h$.

In order to bound the SWIF distance, we take $B^{m+1}=X''$ to be the filling current. Then the excess boundary is
\be
A^m=\varphi_1(X_1\setminus U_1) \cup\varphi_2(X_2\setminus U_2) \cup
\partial X'' \setminus (\varphi_1(U_1)\cup \varphi_2(U_2)).
\ee

Using appropriate orientations we have
\be\label{eq-pushforward}
	\varphi_{1\#}(T_1) - \varphi_{2\#}(T_2) = B^{m+1} + A^{m}.
\ee
Notice that (\ref{eq-pushforward}) is true since the $\operatorname{set} \left( \varphi_{i\#}(T_i) \right) = \varphi_i(X_i) $.
%\be
%\int_{\varphi_1(X_1)}\omega-\int_{\varphi_2(X_2)}\omega
%= \int_{B^{m+1}} d\omega + \int_{A^m} \omega.
%\ee

The volumes of the Lipschitz manifold parts have been computed in
(\ref{subdiffeo-A}) and (\ref{subdiffeo-B}).  So we get:
\begin{eqnarray*}
d_{\mathcal{F}}(X_1, X_2) &\le&
\vol_m(U_{1})\left( \bar{h} + a\right) +\vol_m(U_2)\left( \bar{h}+ a\right)\\
&& + \left( \bar{h} + a\right)\vol_{m-1}(\partial U_{1})+
\left( \bar{h}+ a\right)\vol_{m-1}(\partial U_{2})\\
&&+||T_1||(X_1\setminus U_1)+ ||T_2||(X_2\setminus U_2).
\end {eqnarray*}
\end{proof}

%%%%%%%%%%%%%%%%%%%%%%%%%%%%%%%%%%%%%%%%%%%%
%%%%%%%%%  SECTION  :  REVIEW OF RICCI FLOW   %%%%%%%%%%%%%
%%%%%%%%%%%%%%%%%%%%%%%%%%%%%%%%%%%%%%%%%%%%

%%%%% SUBSECTION: Ricci Flow, OLD and NEW %%%%%%%

\section{Ricci Flow: Old and  New} \label{sect-RF}

%%%%% SUBSECTION: Hamilton's Ricci Flow %%%%%%%%%

\subsection{Hamilton's Ricci Flow}

Ricci flow is an evolution equation of the metric on a Riemannian manifold, introduced for the first time by Richard Hamilton in ~\cite{Hamilton-Ricci-Flow} given by the following weakly parabolic equation:
\be
	\frac{d}{dt} g(t) = -2 \Ric(t).
\ee

Hamilton proved that for the initial metric $g_0$ on a closed manifold $M$, the Ricci flow equation satisfies short time existence and uniqueness. \cite{Hamilton-Ricci-Flow} 

\begin{prop}\label{Lipschitz-distance-Ricci Flow}
Suppose $M$ is a closed manifold and let $g(t)$ be a solution to the Ricci flow equation on the time interval $[0,T]$. If
\be
	\| \Rm(t) \|	 \le K    \;\; \text{for all $t \in [0,T]$},
\ee
where $\| \Rm(t) \|$ is with respect to a fixed background metric $g_0$; then, for any $t_1 \le t_2$ and $V \in TM $ we have the following:
\be
	            e^{ - \; \sqrt{n} K (t_2 - t_1)}g(t_2) (V,V) \le   g(t_1) (V,V)  \le e^{\sqrt{n} K (t_2 - t_1)}g(t_2) (V,V),
\ee
and therefore,
\be
	e^{ - \sqrt{n} K (t_2 - t_1)}  \le \frac{d_{M , g(t_2)}(x , y)}{d_{M , g(t_1)}(x , y)} \le  e^{\sqrt{n} K (t_2 - t_1)}
\ee
\end{prop}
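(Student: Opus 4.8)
The plan is to deduce everything from the evolution equation $\partial_t g(t) = -2\Ric(t)$ applied to a fixed tangent vector $V \in TM$. First I would fix $V$ and consider the scalar function $f(t) = g(t)(V,V) > 0$ (positive since $g(t)$ is a metric for all $t$). Differentiating gives $f'(t) = -2\Ric(t)(V,V)$. The key pointwise estimate I need is a bound of the form $|\Ric(t)(V,V)| \le c(n)\, \|\Rm(t)\|\, g(t)(V,V)$, which follows from the algebraic fact that the Ricci curvature is a trace of the full curvature tensor: contracting $\Rm$ over an orthonormal basis for $g(t)$ produces $\Ric$, and each sectional-type term is controlled by $\|\Rm(t)\| \le K$. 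With the constant normalized so that this reads $|\Ric(t)(V,V)| \le \tfrac{\sqrt n}{2} K\, g(t)(V,V)$ (matching the stated exponents $\pm\sqrt n K$), we obtain the differential inequality
\be
	\left| \frac{d}{dt} \Ln f(t) \right| = \frac{|f'(t)|}{f(t)} = \frac{2|\Ric(t)(V,V)|}{g(t)(V,V)} \le \sqrt{n}\, K.
\ee

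Next I would integrate this over $[t_1,t_2]$: from $-\sqrt n K \le \tfrac{d}{dt}\Ln f(t) \le \sqrt n K$ we get $-\sqrt n K(t_2-t_1) \le \Ln f(t_2) - \Ln f(t_1) \le \sqrt n K(t_2 - t_1)$, and exponentiating yields $e^{-\sqrt n K(t_2-t_1)} \le f(t_2)/f(t_1) \le e^{\sqrt n K(t_2-t_1)}$, which is exactly the asserted two-sided comparison of $g(t_1)(V,V)$ and $g(t_2)(V,V)$ (after rearranging). Since this holds for every $V \in TM$ simultaneously, the metrics $g(t_1)$ and $g(t_2)$ are uniformly bilipschitz equivalent with constant $e^{\sqrt n K(t_2-t_1)}$.

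For the distance statement, I would use that $d_{M,g(t_i)}(x,y)$ is the infimum of $g(t_i)$-lengths of curves joining $x$ to $y$. If $\gamma$ is any curve, then $L_{g(t_2)}(\gamma) = \int \sqrt{g(t_2)(\gamma',\gamma')} \le \int e^{\tfrac{\sqrt n}{2}K(t_2-t_1)}\sqrt{g(t_1)(\gamma',\gamma')} = e^{\tfrac{\sqrt n}{2}K(t_2-t_1)} L_{g(t_1)}(\gamma)$ pointwise along the curve, and conversely; taking infima over curves gives the bilipschitz bound on distances. Strictly the exponent on the distances should then be $\tfrac{\sqrt n}{2}K(t_2-t_1)$, matching the factor of $2$ discrepancy with the metric bound — I would simply track constants carefully so the final statement is internally consistent (or absorb the factor into the normalization of the curvature bound).

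The only real subtlety — the "hard part," though it is standard — is the pointwise Ricci-versus-full-curvature bound $|\Ric(t)(V,V)| \le \tfrac{\sqrt n}{2} K\, g(t)(V,V)$ with the sharp dimensional constant, together with the care needed because the norm $\|\Rm(t)\|$ is measured with respect to the \emph{background} metric $g_0$ while the comparison is between $g(t_1)$ and $g(t_2)$. One resolves this by a continuity/bootstrap argument: on the closed manifold $M$, the quantities involved are continuous in $t$, so the differential inequality is valid and can be integrated directly without circularity, since $\|\Rm(t)\| \le K$ is assumed as a hypothesis for all $t \in [0,T]$ rather than being something we must propagate. Everything else is routine integration and the definition of length-metric distance.
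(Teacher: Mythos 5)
Your argument is correct and is essentially the proof the paper relies on: the paper gives no argument of its own, only a citation to the standard reference, and the proof there is exactly your computation --- differentiate $f(t)=g(t)(V,V)$, bound $|\tfrac{d}{dt}\Ln f|$ by a dimensional constant times the curvature bound, integrate, and then pass from the pointwise comparison of metrics to lengths and hence distances. The two caveats you flag (the exact dimensional constant in $|\Ric(t)(V,V)|\le c(n)K\,g(t)(V,V)$, and the fact that the hypothesis measures $\|\Rm(t)\|$ against the fixed background metric $g_0$ rather than against $g(t)$, which is what the trace estimate really requires) are imprecisions of the proposition as stated rather than gaps in your argument, and they are harmless for how the result is used later, where only qualitative bilipschitz closeness as $t_2\to t_1$ matters.
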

\begin{proof}
	c.f. \cite{MR2061425}.
\end{proof}

The above result also holds locally, namely:

\begin{prop}\label{Lipschitz-distance-Ricci Flow-local}
Suppose $M$ is a closed manifold and let $g(t)$ be a solution to the Ricci flow equation on the time interval $[0,T]$ and $\Omega \subset M$ an open subset of the manifold then, if
\be
	\sup_{x \in \Omega} \| \Rm(x,t) \|	 \le K    \;\; \text{for all $t \in [0,T]$},
\ee
where,  $ \| \Rm(x,t) \|$ is with respect to a fixed background metric $g_0$; then, for any $t_1 \le t_2$ and $V \in T\Omega $ we have the following:
\be
	            e^{ - \; \sqrt{n} K (t_2 - t_1)}g(t_2) (V,V) \le   g(t_1) (V,V)  \le e^{\sqrt{n} K (t_2 - t_1)}g(t_2) (V,V)
\ee
\end{prop}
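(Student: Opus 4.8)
The plan is to recognize that the asserted metric comparison is a purely pointwise statement, whose proof is an elementary ODE estimate along the flow, and that this estimate consumes only the value of the curvature at the single point where the vector $V$ lives. Consequently almost nothing new is needed beyond the proof of Proposition~\ref{Lipschitz-distance-Ricci Flow}: one simply repeats that argument but now only feeds in the curvature bound on $\Omega$, which is exactly what is available. (Note there is no distance statement to establish in the local version, since distances inside $\Omega$ are not determined by $g|_\Omega$ alone; that is precisely why the conclusion is only the bilinear-form comparison for $V\in T\Omega$.)

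Concretely, I would fix a point $x\in\Omega$ and a nonzero $V\in T_xM$, and set $f(t):=g(t)(V,V)$, a smooth positive function on $[0,T]$. Differentiating the Ricci flow equation with $V$ held fixed gives $f'(t)=-2\,\Ric_{g(t)}(V,V)$. Next I would invoke the algebraic bound expressing $\Ric$ as a trace/contraction of the full curvature tensor, namely $|\Ric_{g(t)}(V,V)|\le \tfrac{\sqrt n}{2}\,\|\Rm(x,t)\|\,g(t)(V,V)$ at the point $x$ (with the norm conventions of \cite{MR2061425}, this is the same inequality used in the proof of Proposition~\ref{Lipschitz-distance-Ricci Flow}). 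Since $x\in\Omega$, the hypothesis yields $\|\Rm(x,t)\|\le K$ for all $t$, hence $|f'(t)|\le \sqrt n\,K\,f(t)$, i.e. $\bigl|\tfrac{d}{dt}\log f(t)\bigr|\le \sqrt n\,K$. Integrating over $[t_1,t_2]$ gives $|\log f(t_2)-\log f(t_1)|\le \sqrt n\,K\,(t_2-t_1)$, which is exactly the claimed two-sided inequality after exponentiating; since $x$ and $V$ were arbitrary with $V\in T\Omega$, this proves the proposition.

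The step I expect to require the most care is not a real obstacle but a matter of bookkeeping: verifying that the $\Ric$–$\Rm$ inequality above is genuinely pointwise (so that localizing the curvature hypothesis is legitimate) and tracking the dimensional constant, which depends on the chosen norm on $\Rm$. I would simply adopt the convention of the cited reference so that the resulting constant is the stated $\sqrt n$, matching Proposition~\ref{Lipschitz-distance-Ricci Flow}.
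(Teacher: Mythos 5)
Your proposal is correct and is exactly the standard argument that the paper delegates to by citing \cite{MR2061425}; the paper's own ``proof'' is just that citation, so you have filled in the details it omits, and the localization to $\Omega$ works precisely for the reason you identify (the metric-comparison inequality is established pointwise, using only the curvature at $x$). One small caution worth noting: the statement in the paper asserts that $\|\Rm(x,t)\|$ is measured ``with respect to a fixed background metric $g_0$,'' whereas your contraction estimate $|\Ric_{g(t)}(V,V)|\le \tfrac{\sqrt n}{2}\|\Rm(x,t)\|\,g(t)(V,V)$ is cleanest when the norm on $\Rm$ is the one induced by $g(t)$ itself; with a genuinely fixed background norm one would need an additional (Gronwall-type) comparison between $g_0$ and $g(t)$ to recover the stated constant. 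You implicitly resolve this by adopting the conventions of the cited reference, which is consistent with how the paper uses these propositions, but it is the one place where the bookkeeping is not entirely automatic.
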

\begin{proof}
	c.f. \cite{MR2061425}.
\end{proof}

%%%%%% SUBSECTION: Ricci Flow NeckPinch %%%%%%%%

\subsection{Ricci Flow Neckpinch}

In this section, we will review the results about neckpinch singularity obtained by Angenent-Knopf \cite{Neckpinch}\cite{AK-precise} and Angenent-Caputo-Knopf \cite{ACK}. We will repeat some of their Theorems and Lemmas from their work that we will be using later on in this paper. A nondegenerate neckpinch is a local type I singularity (except for the round sphere shrinking to a point) is arguably the best known and simplest example of a finite-time singularity that can develop through the Ricci flow. A nonegenerate neckpinch is a type I singularity whose blow up limit is a shrinking cylinder soliton. more precisely,

\begin{defn}\label{defn-neckpinch}
 a solution $\left( M^{n+1} , g(t) \right)$ of Ricci flow develops a neckpinch at a time $T< \infty$ if there exists a time-dependent family of proper open subsets $U(t) \subset M^{n+1}$ and diffeomorphisms $\phi(t) : \R \times \sphere^{n} \to U(t)$ such that $g(t)$ remains regular on $M^{n+1} \setminus U(t)$ and the pullback $\phi(t)^*\left( g(t) \right)$  on $\R \times \sphere^{n}$ approaches the \textbf{shrinking cylinder  soliton} metric 
\be
	ds^2 + 2(n-1)(T-t)g_{can}
\ee
\end{defn}

For the first time, Angenent-Knopf in \cite{Neckpinch} rigorously proved the existence of nondegenerate neckpinch on the sphere $\sphere^{n+1}$ in ~\cite{Neckpinch}. Their main result in \cite{Neckpinch} is as follows:

\begin{thm}

If $n  > 2$, there exists an open subset of the family of metrics on $\sphere^{n+1}$ possessing $\operatorname{SO}(n + 1)$ symmetries such that the Ricci fow starting at any metric in this set develops a neckpinch at some time $T < 1$. The singularity is rapidly-forming (Type I), and any sequence of parabolic dilations formed at the developing singularity converges to a shrinking cylinder soliton.
\be
	ds^2 + 2(n-1)(T-t)g_{can}.
\ee

This convergence takes place uniformly in any ball of radius
\be
	o \left( \sqrt{(T -t ) \log{\frac{1}{T - t }}} \; \right),
\ee
centered at the neck.

Furthermore, there exist constants $0 < \delta , C < \infty$ such that the radius $\psi$ of the sphere at distance   $\sigma$from the neckpinch is bounded from above by
\be
	\psi \le \sqrt{2(n-1)(T -t )} + \frac{C\sigma^2}{- \log{(T - t )} \sqrt{T -t }},
\ee
for $|\sigma| \le 2\sqrt{(T - t ) \log{(T - t )}} $, and, 
\be
	\psi \le C \frac{\sigma}{\sqrt{- \log{(T - t )}}}\sqrt{\log{\frac{\sigma}{-(T - t ) \log{(T -t )}}}},
\ee
for $  2\sqrt{(T - t ) \log{(T - t )}}   \le   \sigma \le (T - t)^{\tfrac{1}{2} - \delta }$
\end{thm}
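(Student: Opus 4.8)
The plan is to stay inside the class of $\operatorname{SO}(n+1)$-invariant metrics, where Ricci flow reduces to a single quasilinear parabolic equation for the warping function, and then to produce the open set of initial data together with barrier (sub-/super-solution) arguments. First I would fix the rotationally symmetric ansatz $g=\varphi(x,t)^2\,dx^2+\psi(x,t)^2\,g_{can}$ on $\sphere^{n+1}$, with $x\in[-1,1]$, $g_{can}$ the unit round metric on $\sphere^{n}$, and $\psi$ vanishing simply at the poles $x=\pm1$. Passing to the (evolving) arclength coordinate $s$ with $\partial_s=\varphi^{-1}\partial_x$, the flow becomes
\be
\partial_t\psi=\psi_{ss}-(n-1)\,\frac{1-\psi_s^2}{\psi},
\ee
with $[\partial_t,\partial_s]=(n\,\psi_{ss}/\psi)\,\partial_s$ bookkeeping the moving coordinate; the sectional curvatures are $K_0=-\psi_{ss}/\psi$ on mixed planes and $K_1=(1-\psi_s^2)/\psi^2$ on spherical planes, so $|\Rm|$ is comparable to $\max(|K_0|,|K_1|)$ and a singularity is precisely a spot where $\psi\to0$ or $\psi_s$ blows up.

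Next I would cut out the open set of initial data: metrics that are in addition reflection-symmetric across the equator, for which $\psi(\cdot,0)$ has a single interior local minimum at $x=0$ --- the neck, of radius $r_0=\psi(0,0)$ --- and one local maximum on each side --- the bumps, of radius $\rho_0$ --- with the neck quantitatively pinched, $r_0/\rho_0$ small depending only on $n$, and with a concavity sign condition near the poles ($\psi_{ss}\le0$, i.e.\ $K_0\ge0$ there). These form an open set in a suitable $C^k$ topology. Using the maximum principle on $\psi$, on $\psi_s$, and on $K_0$, one shows the neck--bump structure, the reflection symmetry, and the polar sign conditions persist along the flow, the bumps stay bounded below, and the neck radius $r(t)=\psi(0,t)$ reaches $0$ in finite time: at the equator $\psi_s=0$ and $\psi_{ss}\ge0$, so $\tfrac{d}{dt}r^2=2\psi\psi_t\le-2(n-1)$, hence $r(t)^2\le r_0^2-2(n-1)t$ and $T\le r_0^2/(2(n-1))<1$ once $r_0$ is small. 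Keeping the bump and polar regions regular up to $t=T$, so the singularity is genuinely isolated at the neck, is the step that consumes the preserved sign conditions plus lower barriers for $\psi$ away from $x=0$.

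Then I would upgrade to the Type I rate and the limit soliton. Comparing $\psi$ on the neck region with the shrinking cylinder soliton $ds^2+2(n-1)(T-t)g_{can}$ through a super-solution of the shape $\psi\le\sqrt{2(n-1)(T-t)}\,(1+\text{small})$ yields $\psi\gtrsim\sqrt{T-t}$ and the matching bound $\sup|\Rm|\le C/(T-t)$. With a Type I bound, parabolic dilations $g_\lambda(t)=\lambda\,g(T+t/\lambda)$ about the neck subconverge (Hamilton--Cheeger--Gromov compactness, with the needed non-collapsing from Perelman or directly from the rotational symmetry) to an ancient rotationally symmetric solution on $\mathbb{R}\times\sphere^{n}$, which the rescaled relations $\psi_s\to0$, $\psi_{ss}\to0$ force to be the round shrinking cylinder. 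The radius $o\big(\sqrt{(T-t)\log\tfrac{1}{T-t}}\big)$ on which this convergence is uniform, and the two precise profile estimates for $\psi$, come out of a finer matched-asymptotics barrier construction: in the inner region $|\sigma|\lesssim\sqrt{(T-t)\log\tfrac{1}{T-t}}$ one glues the cylinder soliton to a correction governed by the linearization $\partial_t=\partial_{ss}-(\text{Hermite-type potential})$, whose spectrum produces the $C\sigma^2/(-\log(T-t)\sqrt{T-t})$ term, while in the intermediate region $\sqrt{(T-t)\log(T-t)}\le\sigma\le(T-t)^{1/2-\delta}$ one uses a self-similar intermediate barrier to get the $C\,\tfrac{\sigma}{\sqrt{-\log(T-t)}}\sqrt{\log(\cdots)}$ bound, each time verifying the differential inequality and the one-sided matching across region boundaries.

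The main obstacle, and the technical core of the Angenent--Knopf argument, is exactly this construction: one must hand-design explicit profiles on the outer, intermediate, and inner regions that are each genuine super- (resp.\ sub-) solutions of the quasilinear equation, fit together with the correct one-sided inequality across the interfaces, and bootstrap simultaneously into the Type I bound and the precise asymptotic profile. The other delicate point is confining the singularity to the neck --- ruling out a simultaneous pole or bump singularity --- which forces the choice of the open class of initial data so that the right curvature signs propagate under the flow. Short-time existence, the maximum-principle manipulations, and the blow-up compactness are then standard.
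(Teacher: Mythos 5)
This theorem is quoted from Angenent--Knopf's paper \cite{Neckpinch}; the paper under review does not prove it but cites it, so the relevant comparison is against Angenent--Knopf's published argument. Your overall architecture is faithful to theirs: reduce to the scalar PDE $\psi_t=\psi_{ss}-(n-1)(1-\psi_s^2)/\psi$ for the rotationally symmetric profile, use maximum-principle arguments to propagate structure along the flow, blow up parabolically to identify the shrinking cylinder, and then establish the refined profile bounds by matched barrier constructions in the parabolic and intermediate regions. That is the right skeleton.

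There is, however, a concrete sign error in your finite-time extinction step, and it is not cosmetic. You write that at the equator $\psi_s=0$ and $\psi_{ss}\ge 0$, hence $\tfrac{d}{dt}r^2=2\psi\psi_t\le -2(n-1)$. But at an interior local minimum $\psi_{ss}\ge 0$ forces $\psi_t=\psi_{ss}-(n-1)/\psi\ge -(n-1)/\psi$, so $\tfrac{d}{dt}r^2\ge -2(n-1)$ --- the opposite inequality. This is exactly the easy half of the Type~I estimate (the neck cannot shrink faster than a round cylinder, hence $\psi_{\min}(t)\ge\sqrt{2(n-1)(T-t)}$ once $T$ is known to exist), and it yields a \emph{lower} bound on the pinching time, not an upper bound. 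As written, your argument does not show the neck pinches at all, since nothing prevents $\psi_{ss}$ from being large at the neck and the neck from stalling or even fattening. Angenent--Knopf obtain finite-time singularity from a global mechanism --- the positive scalar curvature hypothesis (via $\partial_t R\ge \Delta R+\tfrac{2}{n+1}R^2$, which forces $R_{\min}$ to blow up in finite time) --- combined with the ``sufficiently pinched'' hypothesis and supersolution comparisons to localize the blow-up at the neck rather than at the bumps or poles. Your sketch should replace the pointwise ODE claim at the equator with this scalar-curvature/supersolution mechanism; everything downstream (Type~I rate, blow-up compactness, cylinder identification, and the two-region barrier construction for the precise radius estimates) then proceeds as you describe.
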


The class of initial metrics for which we establish "neckpinching" is essentially described by three conditions:
(i) the initial metric should have positive scalar curvature, (ii) the sectional curvature of the initial
metric should be positive on planes tangential to the spheres $\{x\} \times  \sphere^n$, and (iii) the initial metric should
be " sufficiently pinched".

\begin{lem}\label{bounding-curvature-off-neckpinch}
There is a constant $C$ depending on the solution $g(t)$ such that:
\be
	\| \Rm \| \le \frac{C}{\psi^2}
\ee

\end{lem}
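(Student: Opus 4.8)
The plan is to reduce the estimate to the two sectional curvatures of a rotationally symmetric metric and then feed in the a priori bounds of Angenent--Knopf. Away from the two poles, write the solution in the form $g(t) = ds^2 + \psi(s,t)^2\,g_{can}$, with $s$ the arclength along the profile and $g_{can}$ the unit round metric on $\sphere^n$. For such a warped product a standard computation diagonalizes the curvature operator in the obvious frame: its eigenvalues (the sectional curvatures) are $L = -\psi_{ss}/\psi$, with multiplicity $n$, on the planes spanned by $\partial_s$ and a spherical direction, and $K = (1-\psi_s^2)/\psi^2$, with multiplicity $\binom{n}{2}$, on the planes tangent to the $\sphere^n$-fibres. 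Hence $\|\Rm\|$ is bounded by a dimensional constant times $\max\{|K|,|L|\}$, so it is enough to prove $|K|\le C/\psi^2$ and $|L|\le C/\psi^2$. Near the poles the metric is smooth and $\psi\to 0$, so there the curvature stays bounded while $1/\psi^2\to\infty$ and the inequality is automatic; only the region away from the poles requires an argument.

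For $K$ this is immediate: positivity of the sectional curvature on planes tangent to the spheres is among the curvature conditions preserved along these solutions (see \cite{Neckpinch}), and it says exactly $\psi_s^2<1$; therefore $|1-\psi_s^2|\le 1$ and $|K|\le 1/\psi^2$. For $L=-\psi_{ss}/\psi$ I would split the manifold according to the neckpinch picture. On $M^{n+1}\setminus U(t)$, away from the neck, the solution remains regular up to and past time $T$ by Definition \ref{defn-neckpinch}, so $\|\Rm\|$ is bounded there uniformly in $t$, while $\psi\le\psi_{\max}<\infty$ (its spatial maximum is non-increasing under the flow); hence $\|\Rm\|\le\|\Rm\|\,\psi_{\max}^2/\psi^2\le C/\psi^2$ on that region. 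On the shrinking neck region the Type~I bound $\|\Rm\|\le C/(T-t)$, combined with the profile asymptotics of \cite{Neckpinch, AK-precise} --- which give $\psi\le C\sqrt{T-t}$ in the inner parabolic region, and more generally show $\psi$ to be asymptotically affine in $s$ (cone-like) in the intermediate region, so that $\psi^2\,\|\Rm\|$ stays bounded there --- yields $|L|\le\|\Rm\|\le C/\psi^2$. Patching the two estimates gives $\|\Rm\|\le C/\psi^2$ with $C$ depending on $g(t)$.

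I expect the genuine obstacle to be the bound on $L$ in the transition zone between the inner parabolic neck and the fixed outer region: there neither $\psi\sim\sqrt{T-t}$ nor an a priori curvature bound is directly available, and one must invoke the fine asymptotics of \cite{AK-precise} (equivalently, run a maximum-principle argument showing that $\psi^2 L$ stays bounded along the flow) to rule out blow-up of $\psi^2|\psi_{ss}|$ there. The remaining ingredients are either the standard warped-product curvature formulas or citations of the gradient and Type~I estimates already in place for these solutions.
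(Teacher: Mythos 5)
Your reduction to the two sectional curvatures $K=(1-\psi_s^2)/\psi^2$ and $L=-\psi_{ss}/\psi$ of the warped product, and the bound $|K|\le 1/\psi^2$ from the preserved gradient estimate $|\psi_s|\le 1$, match the beginning of the argument exactly. The proof the paper cites (Lemma 7.1 of \cite{Neckpinch}) then handles $L$ in one stroke: Angenent--Knopf consider the quantity
\be
a \;=\; \psi\,\psi_{ss}-\psi_s^2+1 \;=\; \psi^2\,(K-L),
\ee
derive a parabolic evolution equation for it, and apply the maximum principle to conclude $|a|\le\mathcal{A}$ uniformly in $s$ and $t$. Combined with $|K|\le 1/\psi^2$ this gives $|L|\le(1+\mathcal{A})/\psi^2$ immediately, with no regional decomposition. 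This is also why (M8) in Remark \ref{rmrk-sing-metric} records precisely the bound $|a_T|\le\mathcal{A}$ at the singular time.

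The regional-patching argument you offer for $L$ has two problems. First, it is circular: the profile asymptotics $\psi\lesssim\sqrt{T-t}$ in the parabolic region and the cone-like intermediate behaviour you invoke are \emph{consequences} of the curvature estimate of Lemma 7.1 in \cite{Neckpinch} (and its companions), not inputs available to prove it; the same goes for a Type~I bound $\|\Rm\|\le C/(T-t)$, which is a conclusion rather than an a~priori estimate for this family. Second, you yourself flag that the transition zone is not covered by either end of the patching, and there the only way through is indeed the maximum principle on $\psi^2 L$ (equivalently on $a$) that you mention in passing --- but that single argument, run globally, already proves the whole lemma and renders the decomposition unnecessary. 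So the missing idea is not a refinement of the asymptotics but the observation that $a$ satisfies a clean evolution equation to which the maximum principle applies directly.
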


\begin{proof}
	See the Lemma 7.1 in ~\cite{Neckpinch}.
\end{proof}

\subsection{Diameter Bound}
The following diameter bound argument is necessary before we can talk about the intrinsic flat convergence. Proposition \ref{prop-diameter-bound} in below is taken from ~\cite{AK-precise}. We are also including the proof of this Proposition from ~\cite{AK-precise} for completeness of exposition because we need the estimates in the proof as well as the result itself.

\begin{prop}[~\cite{AK-precise}]\label{prop-diameter-bound}
Let $(\sphere^{n+1} , g(t))$ be any $\operatorname{SO}(n+1)$ invariant solution of the Ricci Flow such that $g(0)$ has positive scalar curvature and positive sectional curvature on planes tangential to the spheres $x \times \sphere^n$, assume that in the language of ~\cite{Neckpinch}, each $g(t)$ has at least two bumps for all $t < T$. Let $x = a(t)$ and $y = b(t)$ be the locations of the left- and right- most bumps, and assume that for all $t < T$, one has $\psi (a(t) , t) \ge c$ and $\psi (b(t) , t) \ge c$ for some constant $c>0$. If $g(t)$ becomes singular at $T < \infty$, then $\diam (\sphere^{n+1} , g(t))$ remains bounded as $t \nearrow T$.  
\end{prop}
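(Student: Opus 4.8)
The plan is to pass to the rotationally symmetric gauge, writing $g(t) = ds^2 + \psi(\cdot,t)^2 g_{can}$ on $(-1,1)\times\sphere^n$ with $s$ the $g(t)$-arclength along the profile, and to bound the diameter by the only two quantities that genuinely control it: the total length $L(t) = \int ds$ of the profile curve, and the maximal sphere radius $\psi_{\max}(t) = \max_x \psi(x,t)$. Indeed any two points can be joined by first moving inside a single $\sphere^n$-fiber (a round sphere of radius $\le\psi_{\max}(t)$, hence cost $\le\pi\psi_{\max}(t)$) and then along a meridian (cost $\le L(t)$), so $\diam(\sphere^{n+1},g(t)) \le L(t) + \pi\psi_{\max}(t)$, and it suffices to bound $L(t)$ and $\psi_{\max}(t)$ uniformly for $t\in[0,T)$. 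For $\psi_{\max}$ I would use the evolution equation $\partial_t\psi = \psi_{ss} - (n-1)(1-\psi_s^2)/\psi$: at a spatial maximum one has $\psi_s = 0$ and $\psi_{ss}\le 0$, so the upper Dini derivative of $t\mapsto\psi_{\max}(t)$ is at most $-(n-1)/\psi_{\max}(t) < 0$, whence $\psi_{\max}(t)\le\psi_{\max}(0)$ for all $t<T$. I would also record here that $|\psi_s|\le 1$ is preserved (it is equivalent to nonnegativity of the sectional curvatures in the $\sphere^n$-directions), so that $\psi_{\max}$ is the relevant geometric scale everywhere along the profile.

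It remains to bound $L(t)$, and for this I would split the profile by the value of $\psi$, fixing a small $\rho_0 < c$. On $\{\psi\ge\rho_0\}$ Lemma~\ref{bounding-curvature-off-neckpinch} gives $\|\Rm\|\le C/\rho_0^2$ uniformly in $t$, so Proposition~\ref{Lipschitz-distance-Ricci Flow-local} shows the metric there, hence the $s$-length of $\{\psi\ge\rho_0\}$, stays comparable to its value at $t=0$, in particular bounded. On $\{\psi<\rho_0\}$, the hypotheses that each $g(t)$ has at least two bumps and that $\psi(a(t),t),\psi(b(t),t)\ge c$, together with monotonicity of $\psi$ between consecutive critical points, guarantee that for small $\rho_0$ this set consists, as $t\nearrow T$, of the two fixed polar caps plus a neighborhood of the pinching neck. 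The cap lengths remain bounded because the neckpinch of Definition~\ref{defn-neckpinch} is confined to the interior set $U(t)$ and, by \cite{Neckpinch} and \cite{AK-precise}, $g(t)$ converges smoothly away from the neck as $t\nearrow T$, so near each pole the geometry is uniformly controlled. The neck neighborhood is handled by substituting the precise asymptotics into $\int ds$: the profile radius near the neck is $\sim\sqrt{2(n-1)(T-t)}$, the parabolic region has $s$-width $\sim\sqrt{(T-t)\,|\log(T-t)|}$, and the stated upper bounds on $\psi$ in the intermediate and outer ranges force the $s$-length of $\{|\sigma|\le (T-t)^{1/2-\delta}\}$ to tend to $0$ as $t\nearrow T$. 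Adding these contributions bounds $L(t)$ uniformly on $[0,T)$, and together with $\psi_{\max}(t)\le\psi_{\max}(0)$ this proves the proposition.

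The step I expect to be the main obstacle is the estimate on $\{\psi\ge\rho_0\}$, and specifically making the "bounded curvature away from the neck" genuinely uniform in $t$ without circularity: the region that is "away from the neck" is itself delimited by the location of the forming singularity, so one must invoke the fine structural results of \cite{Neckpinch} and \cite{AK-precise} — finiteness of the critical points of $\psi$, and the fact that curvature blows up only at an interior neck pinned between the two outermost bumps — rather than any soft argument; this is precisely the place where the hypotheses $\psi(a(t),t),\psi(b(t),t)\ge c$ and "at least two bumps" are essential. The polar cap estimate is likewise not automatic, as it rests on the nontrivial smooth convergence of the solution near the poles as $t\nearrow T$.
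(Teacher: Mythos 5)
Your route is genuinely different from the paper's, and it has a gap that the paper's argument was designed to sidestep. The paper does not bound the total profile length $L(t)$ and $\psi_{\max}(t)$ separately. Instead it (i) uses Lemma 5.6 of \cite{Neckpinch}, which says $\Ric>0$ on the polar caps $(-1,a(t)]$ and $[b(t),1)$, so distances there only decrease and the cap contribution to the diameter is bounded for free, with no appeal to smooth convergence near the poles; and then (ii) for two fixed points $x_1<x_2$ in $(a(T)-\epsilon,b(T)+\epsilon)$ it differentiates $d_{g(t)}(x_1,x_2)=\int_{x_1}^{x_2}\phi\,dx$ in $t$, integrates by parts to get $n\bigl[\frac{\psi_s}{\psi}\big|_{s(x_1)}^{s(x_2)}+\int(\psi_s/\psi)^2\,ds\bigr]$, controls the integral by $C/\psi_{\min}(t)$ using the uniform gradient bound of Proposition 5.1 and the monotonicity of the number of bumps/necks from Lemma 5.5, and finally invokes the Type~I lower bound $\psi_{\min}(t)\gtrsim\sqrt{T-t}$ (Lemma 6.1) to get $\bigl|\frac{d}{dt}d_{g(t)}(x_1,x_2)\bigr|\le C/\sqrt{T-t}$, which is integrable. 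The diameter bound is then a one-line integration in $t$.

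The concrete gap in your proposal is the $s$-length of $\{\psi<\rho_0\}$ near the neck. The asymptotics you quote cover only $|\sigma|\le(T-t)^{1/2-\delta}$, and upper bounds on $\psi$ there say nothing about the $s$-measure of the larger set where $\psi$ is still below $\rho_0$; that region extends well past $\sigma=(T-t)^{1/2-\delta}$, where no pointwise asymptotic is stated. The natural fix, invoking that the limit profile $\psi(\cdot,T)$ exists and is positive away from $s=0$, is exactly Lemma \ref{Equatorial-only-pinching} (Lemma 10.1 of \cite{Neckpinch}) — but that lemma takes the bounded-diameter conclusion as a hypothesis, so using it here is circular. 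This is the reason the paper works with $\frac{d}{dt}d_{g(t)}(x_1,x_2)$ rather than with geometric decompositions of the diameter: the time-derivative estimate needs only the gradient bound, the bump count, and the Type~I scale, all of which are established before the diameter bound. Your $\psi_{\max}$ estimate via the maximum principle is fine and standard, and your use of Lemma \ref{bounding-curvature-off-neckpinch} together with Proposition \ref{Lipschitz-distance-Ricci Flow-local} for the set $\{\psi\ge\rho_0\}$ is also workable, but the missing piece is precisely the one you flagged as "the main obstacle," and the cited asymptotics alone do not close it.
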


\begin{proof}(~\cite{AK-precise})
	By Proposition 5.4 of ~\cite{Neckpinch}, the limit profile $\psi (\; . \;  , T)$ exists. let $a(t) \to a(T)$ and $b(t) \to b(T)$. By lemma 5.6 of ~\cite{Neckpinch}, the Ricci curvature is positive (and so the distances are decreasing)  on $(-1 , a(t)]$ and $[b(t) , 1)$. Hence it will suffice to bound $d_{(M , g(t))} (x_1 , x_2)$ for arbitrary $x_1 < x_2$ in $(a(T) - \epsilon , b(T) + \epsilon) \subset (-1 , 1)$.
	
Equations  (5) and (11) of ~\cite{Neckpinch} imply that 
\begin{eqnarray}
	\frac{d}{dt} d_{M,g(t)}(x_1 , x_2) &=& \frac{d}{dt} \int_{x_1}^{x_2} \; \phi (x,t) \; \mathrm{d}x  \notag \\ &=& n \int_{s(x_1)}^{s(x_2)} \; \frac{\psi_{ss}}{\psi} \; \mathrm{d}s \\&=& n \left\{  \frac{\psi_s}{\psi} \biggr|^{s(x_2)}_{s(x_1)}  + \int_{s(x_1)}^{s(x_2)} \; \left( \frac{\psi_s}{\psi} \right)^2 \; \mathrm{d}s \right\}. \notag
\end{eqnarray}

Proposition 5.1 of ~\cite{Neckpinch}, bounds $\psi_s$ uniformly, while lemma 5.5 shows that the number of bumps and necks are non-increasing in time. It follows that:
\begin{eqnarray}
	\int_{s(x_1)}^{s(x_2)} \; \left( \frac{\psi_s}{\psi} \right)^2 \; \mathrm{d}s &\le& C \int_{s(x_1)}^{s(x_2)} \;  \frac{|\psi_s|}{\psi^2}  \; \mathrm{d}s \notag \\ &\le& C \left[ \frac{1}{\psi_{min}(t)} - \frac{1}{\psi_{max}(t)}  \right] \\ &\le& \frac{C}{\psi_{min}(t)}. \notag
\end{eqnarray}

 Hence lemma 6.1 of ~\cite{Neckpinch} lets us conclude that:
 \be
 	\left| \frac{d}{dt} d_{M,g(t)}(x_1 , x_2) \right| \le \frac{C}{\sqrt{T -t}},
 \ee
which is obviously integrable. 
\end{proof}

\begin{lem}\label{Equatorial-only-pinching}
	If the diameter of the solution $g(t)$ stays bounded as $t \nearrow T$ then $\psi (s,T) > 0$ for all $ 0 < s < D/2$, where $D$ is defined as $D = \lim_{t \nearrow T} \psi (x_*(t) , t )$ in which $x_*(t)$ denotes the location of the right bump.  	
\end{lem}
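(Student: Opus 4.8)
The plan is to understand the statement of \lemref{Equatorial-only-pinching} as a positivity claim for the limiting profile $\psi(\cdot,T)$ on the portion of the sphere between the neck and the right-most bump, under the standing assumption that $\diam(\sphere^{n+1},g(t))$ stays bounded as $t\nearrow T$. The right-hand side of the neckpinch has, by the structure theory in \cite{Neckpinch}, a right-most bump at $x=x_*(t)$ where the profile radius stays comparable to $D=\lim_{t\nearrow T}\psi(x_*(t),t)$, and the neck (where $\psi\to 0$) sits to its left. So the interval $0<s<D/2$ in arclength coordinates of the limit metric should be read as parametrizing points strictly between the neck and the bump region, and the claim is that $\psi$ does not vanish there — i.e., pinching at time $T$ occurs only at the central neck.

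First I would recall the monotonicity facts from \cite{Neckpinch} used in the proof of \propref{prop-diameter-bound}: by Lemma 5.5 there, the number of bumps and necks is nonincreasing in $t$, and by Lemma 5.6 the Ricci curvature is positive (so distances are decreasing) on the outer regions $(-1,a(t)]$ and $[b(t),1)$ past the extreme bumps. Combined with the bounded-diameter hypothesis and Proposition 5.4 of \cite{Neckpinch}, the limit profile $\psi(\cdot,T)$ exists and is continuous. The key quantitative input is the bound $|\psi_s|\le C$ from Proposition 5.1 of \cite{Neckpinch}, i.e. the limit profile is Lipschitz in arclength with a uniform constant. From this I would argue by contradiction: if $\psi(s_0,T)=0$ for some $s_0$ with $0<s_0<D/2$ lying on the bump-side region, then since $\psi(\,\cdot\,,T)$ attains the value $D$ at the bump and is Lipschitz-$C$ in $s$, the bump must be at arclength distance at least $D/C$ from $s_0$; but bounded diameter forces the arclength from the neck out to the bump to be controlled, and more importantly a zero of $\psi$ strictly between the neck and the extreme bump would manufacture an extra neck (a local minimum of $\psi$ with $\psi=0$) distinct from the central one, together with an extra bump between it and the center, contradicting that the bump/neck count is nonincreasing and that at all times $t<T$ the configuration has its zeros (necks) only developing at the center.

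Concretely, the steps in order: (1) fix the arclength coordinate for $g(T)$ normalized so that $s=0$ is the neck and $s$ increases toward the right pole, and record that $\psi(\cdot,T)$ is continuous, nonnegative, Lipschitz with constant $C$, and satisfies $\psi(s_*,T)=D$ at the limiting right bump location $s_*$; (2) note $s_*\ge D/2$ is impossible to rule out a priori but is not needed — instead observe that on $(0,s_*)$ the profile at times $t<T$ has no interior neck (zero of $\psi$) by the bump-count monotonicity, and pass to the limit; (3) suppose for contradiction $\psi(s_0,T)=0$ with $0<s_0<D/2\le$ (region up to the bump); then on $(0,s_0)$ or $(s_0,s_*)$ the limit profile, being $\ge 0$, continuous, and equal to $D$ somewhere to the right and to the positive neck-adjacent values to the left, must have an interior positive local maximum (a bump) on one side of $s_0$ and hence $s_0$ itself is a neck — producing, for $t$ close to $T$, a pinching neck and extra bump away from the center, contradicting Lemma 5.5 of \cite{Neckpinch}; (4) conclude $\psi(s,T)>0$ for all $0<s<D/2$.

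The main obstacle I anticipate is making precise the passage from "the limit profile has a zero at $s_0$" to "there is an extra neck/bump for $t<T$": one needs that a zero of $\psi(\cdot,T)$ in the interior genuinely corresponds to a neck forming, rather than, say, the profile merely touching zero tangentially as an artifact of the limit, and one must control where the bump $s_*$ sits relative to $D/2$ so that the interval $(0,D/2)$ really is interior to $(0,s_*)$. I expect this is handled by invoking the precise asymptotics of \cite{Neckpinch}\cite{AK-precise} — specifically that $\psi_s$ is bounded and that the single central neck is the only place $\psi\to 0$ — so that the Lipschitz bound $|\psi_s|\le C$ forces $s_*\ge D/C$ and, together with $\psi>0$ near the neck for all $t<T$ except at $s=0$, pins down the geometry enough to rule out any other zero on $(0,D/2)$. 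The remaining estimates are routine given the cited lemmas.
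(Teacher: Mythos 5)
Note first that the paper itself offers no proof of this lemma: it simply cites Lemma 10.1 of Angenent--Knopf \cite{Neckpinch}, so your sketch must be measured against that argument. Your central step (3) has a genuine gap. The Sturmian bump/neck-count argument (Lemma 5.5 of \cite{Neckpinch}) only yields a contradiction in the case where the limit profile is strictly positive somewhere between the equator and the putative zero $s_0$; only then do you get an extra interior maximum and an extra interior minimum which, via uniform convergence of the profiles, would force extra critical points at times $t<T$. But the dangerous scenario --- the one the lemma exists to exclude --- is that $\psi(\cdot,T)$ vanishes identically on a whole interval $[0,s_0]$ adjacent to the equator. In that case the profiles $\psi(\cdot,t)$ for $t<T$ can remain monotone between the single central neck and the right-most bump, no new bumps or necks ever appear, and the count-monotonicity gives no contradiction: a sequence of monotone profiles can perfectly well converge to a profile that is zero on an interval without any additional critical points at earlier times. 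Your phrase ``the positive neck-adjacent values to the left'' quietly assumes that $\psi(\cdot,T)>0$ just to the right of $s=0$, which is part of the conclusion being proved, so that step is circular.

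A related symptom is that your argument uses the bounded-diameter hypothesis only to invoke existence of the limit profile, whereas ruling out interval collapse requires it quantitatively: one needs the distance-evolution identity $\frac{d}{dt}d_{g(t)}(x_1,x_2)=n\big[\tfrac{\psi_s}{\psi}\big]_{s(x_1)}^{s(x_2)}+n\int_{s(x_1)}^{s(x_2)}\big(\tfrac{\psi_s}{\psi}\big)^2\,ds$ together with the a priori estimates near the neck ($|\psi_s|\le 1$, the bound on $a=\psi\psi_{ss}-\psi_s^2+1$, and $\psi_{\min}\sim\sqrt{2(n-1)(T-t)}$ from Lemma 6.1 of \cite{Neckpinch}) --- the same machinery as in Proposition \ref{prop-diameter-bound} --- and this is how Lemma 10.1 of \cite{Neckpinch} actually proceeds. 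Finally, your fallback Lipschitz remark does not cover the region in the statement: $|\psi_s|\le 1$ gives positivity of $\psi(\cdot,T)$ within distance roughly $D$ of the \emph{bump}, while the lemma concerns $0<s<D/2$ measured from the \emph{equator}; since the bump may sit much farther than $3D/2$ from the equator (only the diameter bounds that distance), the two regions need not overlap.
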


\begin{proof}
	See the Lemma 10.1 of ~\cite{Neckpinch}.
\end{proof}

%%%%%% SUBSETION: Smooth Forward Evolution of Ricci Flow %%%%%

\subsection{Smooth Forward Evolution of Ricci Flow}\label{sect-smooth-forward-evolution}

In this section we will review the results obtained by Angenent, Caputo and Knopf in ~\cite{ACK} about the neckpinch on the sphere in any dimension and their attempt to find a canonical way to perform surgery at the singular time (in this case, finding a limit for surgeries whose scale of the surgery is going to zero.) 

Consider the degenerate metric $g(T)$ resulted from the Ricci Flow neckpinch on $\sphere^{n+1}$ as described earlier. Angenent-Caputo-Knopf in ~\cite{ACK} construct the smooth forward evolution of Ricci flow by regularizing the pinched metric in a small neighborhood of the pinched singularity (of scale $\omega$) and hence producing a smooth metric $g_\omega$. Notice that performing surgery at a small scale $\omega$ produces two disjoint Ricci Flows. For simplicity, we only consider one of these resulting spheres and then we assume that the north pole is the future of the neckpinch point singularity. By the short time existence of Ricci flow, for any small scale $\omega$, the flow exists for a short time depending on $\omega$. Using the asymptotics for Ricci flow neckpinch derived in Angenent-Knopf ~\cite{Neckpinch}, they find a lower bound for the maximal existence time $T_\omega$ of the Ricci flows $g_\omega (t)$ with initial metrics $g_\omega$. Of course, as $\omega \to 0$, one has $g_\omega \to g(T)$ away from the point singularity. Now the question is if also the resulting Ricci flow solutions $g_\omega (t)$ admit a limit as $\omega \to 0$. They prove that this is in fact the case by proving bounds on the curvature off the singularity and then proving a compactness theorem. This limit flow is called the smooth forward evolution of Ricci flow out of a neckpinch singularity. 

Angenent-Caputo-Knopf ~\cite{ACK} show that a smooth forward evolution of Ricci flow out of a neckpinch singularity comes from (via a change of variable) a positive solution of the following quasilinear PDE: 
 \be
	v_t = vv_{rr} - \frac{1}{2}v^2  + \frac{n - 1 - v}{r}v_r + \frac{2(n - 1)}{r^2}(v - v^2) 
\ee
with the singular initial data:
\be\label{eq-initial-data}
	v_{init}(r) = [1 + o(1)]v_0(r) \;\; \text{as} \;\; r \searrow 0,
\ee
where 
\be
	v_0(r) \doteqdot \frac{\tfrac{1}{4}(n-1)}{ - \log{r}}. 
\ee

One notices that away from the point singularity, any smooth forward evolution of (\ref{eq-initial-data}) has to satisfy
\be
\lim_{t \searrow 0} v(r,t) = v_{init}(r)
\ee
 
They prove that the only way, a solution to this equation can be complete is if $v$ satisfies the smooth boundary condition $v(0,t) = 1$ which is incompatible with the fact that $\lim_{r \searrow 0} v_{init}(r) = 0$.  Roughly speaking, this means that for any forward evolution of Ricci flow,  $v$ immediately jumps at the singular hypersurface $\{0\} \times \sphere^n$, yielding a compact forward evolution that replaces the singularity with a smooth $n$-ball by performing a surgery at scale $0$. See Figure \ref{fig-ACK}.

For given small $ \omega> 0 $, they split the manifold $\sphere^{n+1}$ into two disjoint parts, one of which is the small neighborhood $\mathcal{N}_\omega$ of the north pole in which  $\psi_T(s) < \rho_{*} \sqrt{\omega}$ (See ~\cite{ACK} for details about this construction.) 

They keep the metric unchanged on $\sphere^{n+1} \setminus \mathcal{N}_\omega $. Within $\mathcal{N}_\omega$, they take $g_\omega$ to be a metric of the form 
\be
g_\omega = (ds)^2 +  \psi_\omega(s)^2g_{can},
\ee
where $\psi_T$ is a monotone when $\psi_\omega(s) \le \rho_{*} \sqrt{\omega}$ . Monotonicity of $\psi$ in $\mathcal{N}_\omega$, allows them to perform the change of variables $r =  \psi(s)$. In this new coordinate, one sees that $g_\omega$ is of the form:
\be
	g_\omega = \frac{dr^2}{v_\omega(r)} + r^2g_{can}.
\ee

Angenent-Caputo-Knopf then proceed to apply a maximum principle to find sub- and supersolutions of this equation which bound all the positive solutions. A detailed analysis of these bounds enables them to prove curvature estimates that are required in their compactness theorem (See ~\cite{ACK} for further details.)

The main theorem of Angenent-Caputo-Knopf's work about the smooth forward evolution of the Ricci flow past the singularity time is as follows:

\begin{thm}[~\cite{ACK}]\label{ACK-main-thm}
For $n>2$, let $g_0$ denote a singular Riemannian metric on $\sphere^{n+1}$ arising as the limit as $t \nearrow T$ of a rotationally symmetric neckpinch forming at time $T $. Then there exists a complete smooth forward evolution 
\be
\left( S^{n+1} , g(t) \right)  \;\;  \text{for}\;\;  T < t < T_1,
\ee
of $g(T)$ by Ricci Flow. Any complete smooth forward evolution is compact and satisfies a unique asymptotic profile as it emerges from the singularity. In a local coordinate $ 0 < r < r_* \ll 1$ such that the singularity occurs at $r = 0$ and the metric is
\be
	g(r,t) = \frac{dr^2}{v(r,t)} + r^2 g_{can}
\ee

This asymptotic profile is as follows:\\

{\bf Outer Region:}  For $ c_1\sqrt{t - T} < r < c_2$, one has:
\be
	v(r,t) = [1 + o(1)] \frac{n-1}{- 4 \log{r}} \left[  1 + 2(n-1)\tfrac{t-T}{r^2}  \right] \;\; \text{uniformly as }\;\; t \searrow T.
\ee

{\bf Parabolic Region:} Let $ \rho =  \frac{r}{\sqrt{t - T}} $ and $\tau = \log{(t - T)}$; then for $ \frac{c_3}{\sqrt{-\tau}} < \rho < c_4$, one has:
\be
	v(r,t) = [1 + o(1)] \frac{n-1}{- 2 \tau} \left[  1 + \frac{2(n-1)}{\rho^2}  \right] \;\; \text{uniformly as }\;\; t \searrow T.
\ee

{\bf Inner Region:} Let $\sigma = \sqrt{- \tau}\rho = \sqrt{\frac{- \tau}{t-T}}r $; then for $0 < \sigma < c_5$, one has:
\be
	v(r,t) = [1 + o(1)] \mathcal{B}\left( \frac{\sigma}{n-1} \right)   \;\; \text{uniformly as }\;\; t \searrow T.
\ee 
where $\frac{d\sigma^2}{\mathcal{B}(\sigma)} + \sigma^2g_{can}$ is the Bryant soliton metric.

\end{thm}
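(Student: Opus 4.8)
The plan is to reproduce the Angenent--Caputo--Knopf construction: realize the forward evolution as a limit of genuine Ricci flows started from regularized data, and then read off the three-scale asymptotic profile by a matched-asymptotics argument built from explicit sub- and supersolutions. First I would regularize: for each small $\omega>0$, leave $g(T)$ unchanged on $\sphere^{n+1}\setminus\mathcal{N}_\omega$ and replace it inside the neighborhood $\mathcal{N}_\omega$ (where $\psi_T(s)<\rho_*\sqrt{\omega}$) by a smooth rotationally symmetric cap, obtaining a smooth metric $g_\omega$ on $\sphere^{n+1}$; Hamilton's short-time existence then produces a maximal Ricci flow $g_\omega(t)$ on $[T,T_\omega)$. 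The next step is a \emph{uniform} lower bound $T_\omega\ge T_1>T$ independent of $\omega$: on the uncut part the geometry is essentially the neckpinch, so the Angenent--Knopf profile bounds for $\psi$ together with $\|\Rm\|\le C/\psi^2$ (Lemma~\ref{bounding-curvature-off-neckpinch}) control the curvature there, while on the cap the regularization at scale $\omega$ keeps the curvature bounded for a definite time; an openness/continuity argument upgrades this to a uniform interval.

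Next I would reduce, near the pole, to the scalar problem. Monotonicity of $\psi_\omega$ on $\mathcal{N}_\omega$ lets one use $r=\psi(s)$ as a coordinate, so that $g_\omega=\frac{dr^2}{v_\omega(r)}+r^2 g_{can}$ and the flow becomes the quasilinear equation $v_t=vv_{rr}-\tfrac12 v^2+\tfrac{n-1-v}{r}v_r+\tfrac{2(n-1)}{r^2}(v-v^2)$ with singular initial data $v_{init}(r)=[1+o(1)]\,\tfrac{(n-1)/4}{-\log r}$. The heart of the a priori estimates is the construction of explicit sub- and supersolutions of this equation---modeled on $v_0(r)=\tfrac{(n-1)/4}{-\log r}$ and its parabolic corrections in the outer and parabolic ranges, and on the Bryant-soliton function $\mathcal{B}$ near $r=0$---together with the maximum principle, which sandwiches every positive solution between them. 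These bounds supply the curvature estimates needed for compactness, and they also show that a \emph{complete} solution is forced to satisfy the smooth boundary condition $v(0,t)=1$, which is incompatible with $\lim_{r\searrow 0}v_{init}(r)=0$; hence the singular point is instantaneously replaced by a smooth $n$-ball and any complete forward evolution is compact. With uniform curvature bounds, Shi-type derivative bounds, and a uniform injectivity-radius lower bound on compact subsets, Hamilton--Cheeger--Gromov compactness extracts a subsequential limit $g_\omega(t)\to g(t)$, a smooth Ricci flow for $T<t<T_1$ that agrees with $g(T)$ away from the pole---the desired forward evolution.

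It remains to establish the three-region asymptotics. In the \textbf{outer region} $c_1\sqrt{t-T}<r<c_2$ I would linearize about $v_0$ and trap $v$ between barriers of the stated form $[1+o(1)]\tfrac{n-1}{-4\log r}[1+2(n-1)\tfrac{t-T}{r^2}]$. In the \textbf{parabolic region} I rescale $\rho=r/\sqrt{t-T}$, $\tau=\log(t-T)$; the rescaled equation degenerates to the shrinking cylinder, and barriers in $(\rho,\tau)$ give $v=[1+o(1)]\tfrac{n-1}{-2\tau}[1+\tfrac{2(n-1)}{\rho^2}]$. In the \textbf{inner region} I rescale once more by $\sigma=\sqrt{-\tau}\,\rho$; here the rescaled flow converges to the static Bryant-soliton ODE, yielding $v=[1+o(1)]\mathcal{B}(\sigma/(n-1))$. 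Finally one checks that the three expansions agree on their overlaps, which fixes all constants and gives uniqueness of the profile.

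I expect the asymptotic analysis of the last step to be the main obstacle, and within it the inner region especially: one must show not merely that the Bryant soliton is a formal or barrier solution but that, under the combined parabolic-and-logarithmic rescaling, the actual solutions genuinely converge to it, and then carry out the matching rigorously with two-sided barriers valid on each overlapping regime. The uniform lower bound on $T_\omega$ and the attendant compactness are also delicate, since the geometry must be controlled all the way down to the cap at scale $\omega$.
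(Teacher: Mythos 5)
Your outline follows the Angenent--Caputo--Knopf strategy exactly as this paper presents it: the theorem is quoted from \cite{ACK}, and the paper's own treatment (the review in Section~\ref{sect-smooth-forward-evolution}) describes the same regularization at scale $\omega$ with a lower bound on the existence time $T_\omega$, the change of variables $r=\psi(s)$ leading to the quasilinear equation for $v$ with singular initial data, sub- and supersolutions via the maximum principle giving the curvature bounds and the incompatibility forcing $v(0,t)=1$, a compactness theorem for the limit $\omega\to 0$, and the three-region asymptotic profile. So your proposal takes essentially the same route as the source the paper cites; the paper itself gives no independent proof of this statement.
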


\begin{figure}[h] %  figure placement: here, top, bottom, or page
   \centering
   \includegraphics[width=5in]{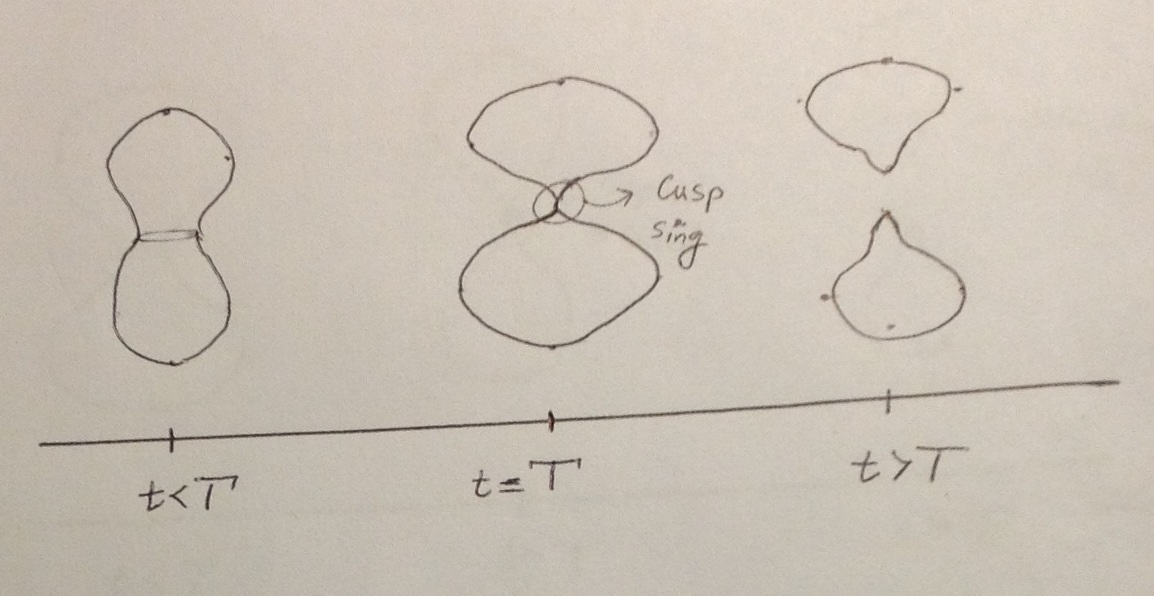}
   \caption{Angenent-Caputo-Knopf Ricci Flow Through Neckpinch Singularity}
   \label{fig-ACK}
\end{figure}

\begin{rmrk}\label{rmrk-sing-metric}
In the work of Angenent-Caputo-Knopf ~\cite{ACK}, assumptions on the singular initial metric $ g(T) = ds^2 + \psi_T(s)^2 g_{can} $ is as follows:

\begin{eqnarray}
	&&(M1) \;\;   \psi_T(s) > 0 \;\; \text{for all}\;\; s \in J \notag \\ \notag
	&&(M2) \;\;    \psi_T(0) = \psi_T(l) = 0 \\ \notag
	&&(M3)  \;\;   \psi_T'(l) = -1     \\ \notag
	&&(M4) \;\;    \psi_T(s)^2 = \left( \frac{n-1}{4} + o(1)  \right) \frac{s^2}{- \log{s}}  \;\; (s \searrow 0) \\
	&&(M5) \;\;    \psi_T(s)\psi_T'(s) = \left( \frac{n-1}{4} + o(1)  \right) \frac{s}{- \log{s}}  \;\; (s \searrow 0) \\ \notag
	&&(M6)  \;\;   | \psi_T'(s) | \le 1 \;\; (0 < s < l) \\ \notag
	&&(M7)  \;\;   \exists r_\# > 0 , \psi_T'(s) \neq 0 \;\; \text{whenever} \;\; \psi_T(s) < 2 r_\# \\ \notag
	&&(M8)  \;\;   \exists \mathcal{A} \; \forall s \in J , |a_T(s)| \le \mathcal{A}\;\; ( \text{where} \; a_0(s) = \psi_T'\psi_T'' - \psi_T'^2 + 1 ) 
\end{eqnarray}

See ~\cite{Neckpinch} and \cite{ACK} for details. 
\end{rmrk}

%%%%%%%%%%%%%%%%%%%%%%%%%%%%%%%%%%%%%%%
%%%%%%%% SECTION: IF CONTINUITY OF ACK FLOW %%%%%%%%
%%%%%%%%%%%%%%%%%%%%%%%%%%%%%%%%%%%%%%%

\section{Continuity of Angenent-Caputo-Knopf's Smooth Forward Evolution}\label{sect-main}

In this section, we will study the continuity of Ricci Flow through singularities under the intrinsic flat distance. Though there is a caveat to this claim; At the post surgery times, our flow consists of two separate Ricci Flows on two disjoint manifolds that are not canonically embedded into a space and hence there is no apriori metric that makes this disjoint union of manifolds into a metric space. So in order to makes sense of different notions of convergence at post surgery times, we need to first define a metric on this disjoint union. Here we will assume that the two parts at the post surgery time $t$ are connected by a thread of length $L(t)$ which joins the future (two points) of the singular point. We also notice that since the thread is one dimensional, it does not contribute in the settled completion of the resulting current space. Also naturally we require 
\be
	\lim_{t \searrow T} L(t) = 0
\ee

Our main theorem is the following:

\begin{thm}
	The compact Ricci flow through singularities obtained by smooth forward evolution of the Ricci flow out of a neckpinch singularity on the $n+1$-sphere (as in ~\cite{ACK}) is continuous under the Sormani - Wenger Intrinsic Flat distance (SWIF).
\end{thm}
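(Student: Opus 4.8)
The plan is to split the time axis into the smooth regime $t\neq 0$ and the two one-sided limits at the singular time $t=0$, prove continuity of each, and combine. In every case the workhorse is Theorem~\ref{thm-sub-diffeo-new}: one exhibits subregions $U_i$ of the two integral current spaces being compared on which the metrics are $(1+\epsilon)$-biLipschitz and intrinsic distances are close, while the complements carry little mass, and then reads off an upper bound on $d_{\mathcal{F}}$. For a fixed $t_0\neq 0$ and a short interval $I=[t_0-\delta,t_0+\delta]$ with $0\notin I$, the flow (on each connected component when $t_0>0$) is smooth with curvature bounded on $I$, so Proposition~\ref{Lipschitz-distance-Ricci Flow} makes $g(t)$ and $g(t_0)$ $(1+\epsilon)$-biLipschitz with $\epsilon=\epsilon(|t-t_0|)\to 0$; taking $U_i$ to be the whole space (so the leftover mass terms vanish) and applying Proposition~\ref{prop-squeeze-Z}, a special case of Theorem~\ref{thm-sub-diffeo-new}, bounds $d_{\mathcal{F}}(X(t),X(t_0))$ by $\epsilon$ times the uniformly bounded sum of volumes and boundary volumes on $I$. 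When $t_0>0$ one also notes that $L$ is continuous and that changing the thread length alters the glued ambient length space by a controlled amount, contributing $O(|L(t)-L(t_0)|)$ to the flat distance. This gives continuity at every $t_0\neq 0$.

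For the left limit $t\nearrow 0$ one must show $d_{\mathcal{F}}(X(t),X(0))\to 0$. For small $r>0$ let $U\subset\operatorname{set}(X(0))$ be the complement of the region $\{\psi_T<r\}$ around the neck; by Lemma~\ref{Equatorial-only-pinching} — available because the reflection symmetry forces finite diameter at $T$ via Proposition~\ref{prop-diameter-bound} — this $U$ is a Riemannian manifold with boundary of bounded diameter and volume, with the restricted metric related to $g$. On $U$ one has $\psi\geq c(r)>0$, so Lemma~\ref{bounding-curvature-off-neckpinch} gives $\|\Rm\|\leq C/c(r)^2$ there and Proposition~\ref{Lipschitz-distance-Ricci Flow-local} makes $g(t)$ and $g(0)$ $(1+\epsilon)$-biLipschitz on $U$ with $\epsilon=\epsilon(r,t)\to 0$ as $t\to 0$ for fixed $r$. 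The distance distortion $\lambda$ of $U$ relative to $X(t)$ and $X(0)$ is controlled by the integrable derivative bound $|\tfrac{d}{dt}d_{g(t)}(x_1,x_2)|\leq C/\sqrt{-t}$ established in the proof of Proposition~\ref{prop-diameter-bound}, so $\lambda\to 0$ as $t\to 0$ for fixed $r$. The excised regions carry mass at most $C(n)\int_{\{\psi<r\}}\psi^n\,ds$, which the Angenent--Knopf profile estimates make small for small $r$, uniformly in $t$ near $0$. Choosing $r$ small and then $t$ close to $0$ and feeding this into Theorem~\ref{thm-sub-diffeo-new} yields $d_{\mathcal{F}}(X(t),X(0))\to 0$.

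For the right limit $t\searrow 0$, $X(t)=M_1(t)\disjointunion M_2(t)$ is joined by a thread of length $L(t)\to 0$ inside the length space $Y(t)$ obtained by gluing the thread. By the Angenent--Caputo--Knopf asymptotics of Theorem~\ref{ACK-main-thm}, the surgery caps where the parabolic and inner (Bryant soliton) profiles live have $r$-extent $O(\sqrt{t\log(1/t)})$ and hence mass tending to $0$; off the caps $g(t)\to g(0)$ and the two metrics are $(1+\epsilon)$-biLipschitz with $\epsilon\to 0$. The thread, being one-dimensional, contributes nothing to $\|T(t)\|$. Intrinsic distances between good points on the same component converge by the biLipschitz bound, while distances between points on opposite components equal $d_{M_1}+L(t)+d_{M_2}$ and converge to the corresponding distance through the singular point, so $\lambda\to 0$. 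Taking $U$ to be the complement of the caps and applying Theorem~\ref{thm-sub-diffeo-new} as before finishes this case; combining the three cases proves the theorem (which is Theorem~\ref{thm-main}).

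The delicate point throughout is the distance-distortion term $\lambda$. Curvature bounds are local, but intrinsic distances are global: a minimizing geodesic between two points lying outside the degenerating neck (or outside the surgery caps) could a priori run through the degenerating region, so the biLipschitz bound alone does not control $\lambda$. Ruling this out — equivalently, showing that distances in $X(t)$ between ``good'' points converge to those in $X(0)$ — is precisely what the positivity of Ricci curvature on the polar caps and the $C/\sqrt{-t}$ derivative bound from \cite{Neckpinch} are designed to supply. Assembling these, together with the $\int\psi^n\,ds$ mass estimate for the excised regions and the verification that the restricted metrics are related to the $g_i$ so that Theorem~\ref{thm-sub-diffeo-new} applies verbatim, is where the real work lies; the remainder is bookkeeping.
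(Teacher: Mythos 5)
Your proposal follows essentially the same route as the paper: split into the smooth case (Lemma~\ref{lem-smooth} and the final theorem in Section~\ref{sect-main}, via Propositions~\ref{Lipschitz-distance-Ricci Flow} and~\ref{prop-squeeze-Z}), the pre-surgery limit $t\nearrow T$ (Lemma~\ref{lem-pre-surgery}, via an exhaustion of $M\setminus\{\psi_T=0\}$, Lemma~\ref{bounding-curvature-off-neckpinch}, Proposition~\ref{Lipschitz-distance-Ricci Flow-local}, the $C/\sqrt{T-t}$ distance-distortion bound from Proposition~\ref{prop-diameter-bound}, and Theorem~\ref{thm-sub-diffeo-new}), and the post-surgery limit $t\searrow T$ (Lemma~\ref{lem-post-surgery}, via the ACK regularization neighborhoods $\mathcal N_\omega$, the thread of length $L(t)\to 0$, and again Theorem~\ref{thm-sub-diffeo-new}). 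Your closing discussion of why curvature bounds alone do not control the global distortion $\lambda$, and how the $C/\sqrt{T-t}$ derivative bound and the positivity of Ricci on the polar caps resolve this, is the same issue the paper handles via the estimate $\lambda_{tj}\le C\sqrt{T-t}$ drawn from the proof of Proposition~\ref{prop-diameter-bound}.
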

\begin{proof}
 Combining Lemmas ~\ref{lem-pre-surgery} and ~\ref{lem-post-surgery} completes the proof.
\end{proof}

%%%%%%%%% SUBSECTION: Smooth Ricci Flow %%%%%%%%%%%

\subsection{Smooth Ricci Flow}

Here we estimate the Intrinsic Flat distance between two times of a compact smooth Ricci flow defined on $[0,T)$ which will be easily derived from Theorem \ref{thm-sub-diffeo}. Since as $t \to t_0  \in [0,T)$ we have
\be
	g(t) \to g(t_0)
\ee
uniformly in smooth norm, it is not surprising that we must also have:
\be
	d_{\mathcal{F}}\Big( \left(M , g(t) \right), \left(M,g(t_0) \right) \Big) \to 0.
\ee
as $t \to t_0$. In fact, we have

\begin{lem}\label{lem-smooth}
	Suppose $\left(M^n , g(t) \right)$ is a smooth solution of Ricci flow on a closed manifold $M^n$ defined on the time interval $[0,T)$. Then,  for any $t_1 , t_2 \in [0,T)$ 
\be
	d_{\mathcal{F}}\Big( \left(M , g(t_1) \right), \left(M,g(t_2) \right) \Big) \le 
	  \frac{\arccos  \sqrt{e^{\sqrt{n} C (t_1 - t_2)}}}{\pi} \max \left\{ \diam(M , g(t_1)) , \diam(M , g(t_2)) \right\}
\ee
where $C$ is a uniform upper bound for $||\Rm||$.

\end{lem}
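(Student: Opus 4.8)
The plan is to view $g(t_1)$ and $g(t_2)$ as two uniformly bilipschitz Riemannian metrics on the fixed closed (and, as in our current-space setting, oriented) manifold $M$, with bilipschitz constant controlled explicitly by $|t_1-t_2|$ and the curvature bound, and then to feed this pair directly into the hemispherical-embedding estimate recorded in Section~\ref{sect-swif}, namely \propref{prop-squeeze-Z} (equivalently \thmref{thm-sub-diffeo} in the special case where the diffeomorphic subregions are all of $M$). The point is that the Ricci-flow distance-distortion estimate is precisely what converts "$t_1$ close to $t_2$'' into "the metrics are $(1+\vare)^2$-bilipschitz'', which is exactly the hypothesis of \propref{prop-squeeze-Z}.

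First I would fix $t_1\le t_2$ in $[0,T)$ (the asserted bound is symmetric in $t_1,t_2$ once one reads $t_1-t_2$ as $-|t_1-t_2|$). Since $M$ is closed and the flow is smooth on the compact subinterval $[0,t_2]\subset[0,T)$, the curvature is bounded there; call a bound $C$, so that Proposition~\ref{Lipschitz-distance-Ricci Flow} applies on $[t_1,t_2]$ and gives, for every $V\in TM$,
\be
e^{-\sqrt{n}\,C(t_2-t_1)}\,g(t_2)(V,V)\ \le\ g(t_1)(V,V)\ \le\ e^{\sqrt{n}\,C(t_2-t_1)}\,g(t_2)(V,V).
\ee
Hence for every $\vare>0$ with $(1+\vare)^2>e^{\sqrt{n}\,C(t_2-t_1)}$ one has simultaneously $g(t_1)(V,V)<(1+\vare)^2g(t_2)(V,V)$ and $g(t_2)(V,V)<(1+\vare)^2g(t_1)(V,V)$ for all $V$, so $M_1=(M,g(t_1))$ and $M_2=(M,g(t_2))$ satisfy the hypotheses of \propref{prop-squeeze-Z} via the identity diffeomorphism. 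Applying that proposition produces distance-preserving embeddings of $M_1$ and $M_2$ into a common space $\bar M\times[0,a]$ for any hemispherical width
\be
a>\frac{\arccos\big((1+\vare)^{-1}\big)}{\pi}\,\max\{\diam(M,g(t_1)),\,\diam(M,g(t_2))\},
\ee
and, since $\partial M=\varnothing$ so the excess-boundary volumes vanish, its conclusion \eqref{77} reduces to
\be
d_{\mathcal{F}}\big((M,g(t_1)),(M,g(t_2))\big)\ \le\ a\big(\vol_n(M,g(t_1))+\vol_n(M,g(t_2))\big).
\ee
Finally I would let $\vare$ decrease to the threshold (so $(1+\vare)^{-1}\uparrow\sqrt{e^{\sqrt{n}C(t_1-t_2)}}$) and $a$ decrease to the bound above; continuity of $\arccos$ then yields the stated inequality.

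I do not expect a genuine obstacle here: the lemma is a soft corollary of the distance-distortion estimate combined with the embedding construction already in hand. The only care needed is bookkeeping — passing from the \emph{strict} bilipschitz inequalities demanded by \propref{prop-squeeze-Z} to the non-strict distortion bound coming from Ricci flow, and then taking the double limit in $\vare$ and in $a$. Both are legitimate because $d_{\mathcal{F}}$ is defined as an infimum over admissible fillings, so any bound valid for all admissible pairs $(\vare,a)$ passes to the infimum. One should also flag that the curvature constant $C$ is finite only because we restrict to the closed subinterval $[0,t_2]$; no uniform-in-time control as $t\nearrow T$ is needed or used, which is exactly why this argument is insensitive to the neckpinch.
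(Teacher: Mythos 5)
Your proposal follows the same route as the paper's own proof: bound curvature on the compact subinterval, invoke the Ricci-flow distance-distortion estimate (Proposition~\ref{Lipschitz-distance-Ricci Flow}) to get a two-sided $(1+\vare)^2$-bilipschitz comparison of $g(t_1)$ and $g(t_2)$ with $\vare = \sqrt{e^{\sqrt{n}C(t_2-t_1)}} - 1$, and feed this through the hemispherical-embedding estimate \propref{prop-squeeze-Z} via the identity diffeomorphism. You handle two small bookkeeping points slightly more carefully than the paper: you observe explicitly that $\partial M=\varnothing$ kills the excess-boundary terms, and you take a genuine limit $\vare\downarrow$ to the threshold rather than plugging in the borderline value directly into a strict-inequality hypothesis.

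There is, however, one discrepancy worth flagging, and it is inherited from the paper rather than introduced by you. Your intermediate display correctly gives
\be
d_{\mathcal{F}}\big((M,g(t_1)),(M,g(t_2))\big)\le a\big(\vol_n(M,g(t_1))+\vol_n(M,g(t_2))\big),
\ee
with $a$ the hemispherical width, because \eqref{77} carries the factor $V_1+V_2+A_1+A_2$. You then assert that letting $\vare$ and $a$ decrease "yields the stated inequality,'' but the lemma as stated has \emph{no} volume factor at all --- the right-hand side is just $a$. These do not match unless one silently normalizes volumes to $\le 1$. The paper's own proof commits exactly the same slip (it invokes \propref{prop-squeeze-Z} and then writes the conclusion without the $V_1+V_2$ factor), so you are not deviating from the source, but the bound you actually derive is the one with the volume term, and that is the correct statement. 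For the paper's downstream use (continuity in $t$, where volumes stay bounded on compact time intervals) the extra factor is harmless, so this is a slip in the lemma's wording rather than a flaw in either argument.
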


\begin{proof}
Since the flow is smooth on $[0,T)$, for any compact sub-interval $J \subset [0,T)$ we have
\be
	\sup_{M^n \times J }||\Rm||\le C=C(J).
\ee
with respect to the initial metric $g_0$ on $M^n$. 

By applying Theorem \ref{Lipschitz-distance-Ricci Flow}, we get
\be \label{eq-smooth-1}
	            e^{ - \; \sqrt{n} C (t_2 - t_1)}g(t_2) (V,V) \le   g(t_1) (V,V)  \le e^{\sqrt{n} C (t_2 - t_1)}g(t_2) (V,V),
\ee

Let $\epsilon=  \sqrt{e^{\sqrt{n} C (t_2 - t_1)}} - 1$ then (\ref{eq-smooth-1}) gives
\be
	g(t_1) (V,V)  \le \left(1 + \epsilon \right)^2 g(t_2) (V,V),
\ee
and
\be
	g(t_2) (V,V)  \le \left(1 + \epsilon \right)^2 g(t_1) (V,V).
\ee

Finally using Proposition \ref{prop-squeeze-Z}, we get:
\be
	d_{\mathcal{F}}\Big( \left(M , g(t_1) \right), \left(M,g(t_2) \right) \Big) \le 
	  \frac{\arccos  \sqrt{e^{\sqrt{n} C (t_1 - t_2)}}}{\pi} \max \left\{ \diam(M , g(t_1)) , \diam(M , g(t_2)) \right\}
\ee

\end{proof}

%%% SUBSECTION: Flow at Singular Time Viewd as Integral Current Space %%%

\subsection{Ricci Flow Through the Singularity as an Integral Current Space}

Let $\left( M , g(t) \right)$ be the Angenent-Knopf's example. At any time $t < T$, $\left( M , g(t) \right)$ is a Riemannian manifold. As before, taking the current structure
\be
	T = \int_{M}
\ee
on $M$, one can think of $\left( M , g(t) \right)$ as an integral current space. 

It is well-known that any point $p$ in any Riemmanian manifold $M$ is asymptotically Euclidean hence 
\be
	\Theta_*(p) = 1,
\ee 
and as a result, $\operatorname{set}(M) = M$.

At the singular time $t = T$, the metric $g(T)$ is degenerate at the level set $\{0\}\times \sphere^n$ but nonetheless still gives rise to the distance metric $d$ on the pinched sphere by minimizing the length $L(\gamma) = \int_{\gamma} \left( g(\gamma'(s) , \gamma'(s) \right)^{\frac{1}{2}} \mathrm{d}s$ along curves as usual. The pinched sphere $\left(M , g(T) \right)$ is again an integral current space. One way to see this is that one observes that the pinched sphere is a union of two $C^1$ - manifolds and the singular point which is of course of measure $0$. See Figure \ref{fig-ACK-IC}.

\begin{figure}[h] %  figure placement: here, top, bottom, or page
   \centering
   \includegraphics[width=5in]{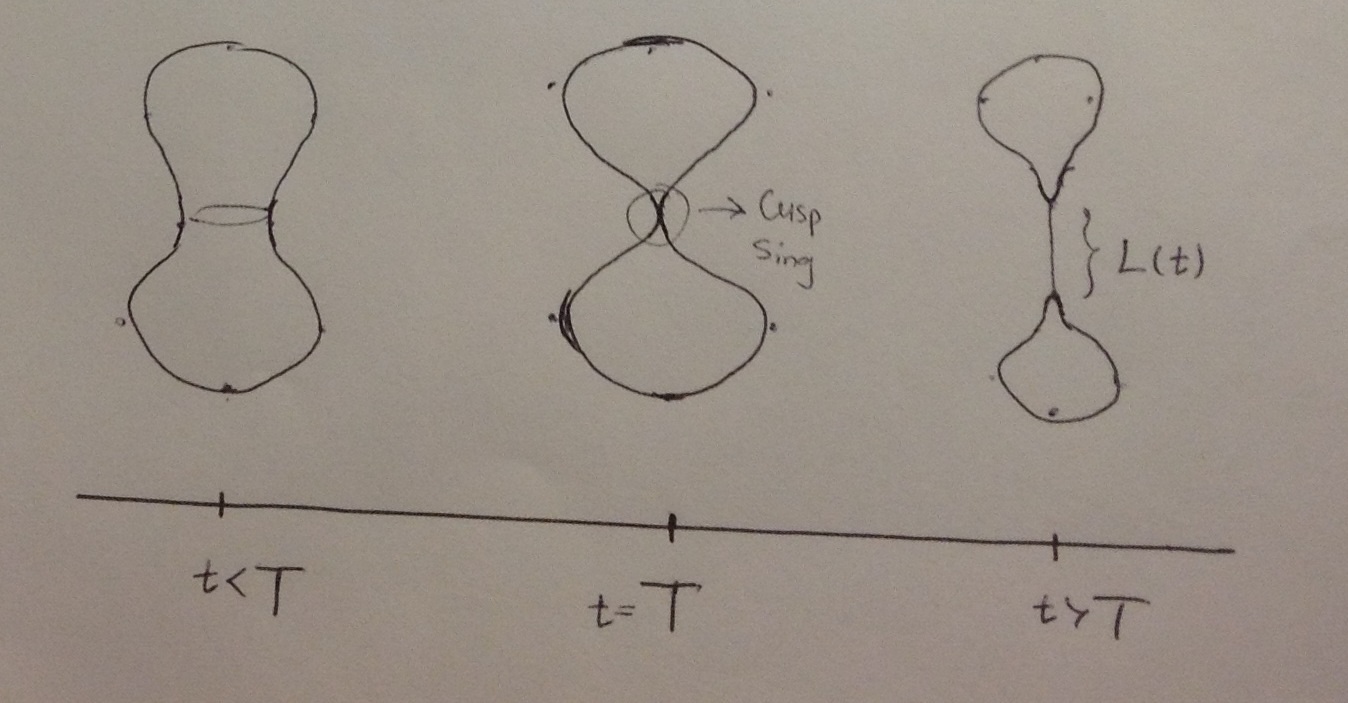}
   \caption{ The Ambient Space for Angenent-Caputo-Knopf's Ricci Flow Through Neckpinch Singularity (Not \emph{Settled}).}
   \label{fig-ACK-IC}
\end{figure}

The caveat here is that when considering the singular $\left(M , g(T) \right)$ as an integral current space, by definition, we need to only consider the settled completion   i.e. the points with positive density (See Figure \ref{fig-ACK-Set-IC}). The Lemma \ref{lem-density-sing} below computes the settled completion.

\begin{lem}\label{lem-density-sing}
	Let $p$ be the singular point in the pinched sphere $\left( M,g(T) \right)$ then,
\be	
	\operatorname{set}(M,g(T)) = M \setminus \{p\}.
\ee 
\end{lem}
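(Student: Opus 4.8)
The plan is to show two things: first, that the singular point $p$ has zero lower density with respect to the mass measure $\|T\|$ (which here is just the Lebesgue/Riemannian volume measure of $g(T)$), and second, that every other point has positive lower density, so that the settled completion is exactly $M \setminus \{p\}$. The second claim is essentially immediate: away from $p$, the metric $g(T)$ is a genuine smooth (indeed, away from the poles, smooth; near the poles, $C^1$) Riemannian metric, and every point of a $C^1$ Riemannian manifold is asymptotically Euclidean, so $\Theta_*(q) = 1 > 0$ for all $q \neq p$. The poles themselves persist in the settled completion because near a pole the metric closes up smoothly like a round cap (condition (M3), $\psi_T'(l) = -1$), giving density $1$ there as well. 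So the entire content is the first claim: $\Theta_*(p) = 0$.

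To estimate $\|T\|(B(p,r))$ from above, I would work in the coordinates $g(T) = ds^2 + \psi_T(s)^2 g_{can}$ near $s = 0$, where by the Angenent--Knopf asymptotics recorded in Remark~\ref{rmrk-sing-metric}, condition (M4),
\be
\psi_T(s)^2 = \left( \tfrac{n-1}{4} + o(1) \right) \frac{s^2}{-\log s} \quad \text{as } s \searrow 0.
\ee
The metric ball $B(p,r)$ is comparable to the coordinate slab $\{|s| < r\}$ (one has $d(p, \cdot) = |s|$ along the axis, and this controls the ball up to constants since $\psi_T$ is small there). The volume of this slab is
\be
\|T\|(B(p,r)) \le C \int_{-r}^{r} \psi_T(s)^n \, ds \le C' \int_0^r \frac{s^n}{(-\log s)^{n/2}} \, ds \le C'' \frac{r^{n+1}}{(-\log r)^{n/2}}.
\ee
Since the relevant ambient dimension is $m = n+1$, we divide by $r^{n+1}$ and obtain
\be
\frac{\|T\|(B(p,r))}{r^{n+1}} \le \frac{C''}{(-\log r)^{n/2}} \longrightarrow 0 \quad \text{as } r \to 0,
\ee
so $\Theta_*(p) = 0$ and $p$ is excluded from the settled completion. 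Combined with the positive-density statement for all other points, this gives $\operatorname{set}(M, g(T)) = M \setminus \{p\}$.

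The main obstacle is purely a matter of care rather than depth: one must justify that the metric ball $B(p,r)$ is genuinely comparable to the coordinate slab $\{|s| < Cr\}$ — i.e., that a minimizing curve from $p$ cannot ``cheat'' by exploiting the spherical directions — and that the $o(1)$ in (M4) is uniform enough to be absorbed into constants on a small interval $(0, s_0)$. Both follow from the fact that $\psi_T(s) \to 0$ as $s \to 0$ together with the explicit bound $|\psi_T'| \le 1$ from (M6), which forces $d(p, (s,\theta)) \ge s$ and also $d(p,(s,\theta)) \le s + \psi_T(s)\,\mathrm{diam}(\sphere^n, g_{can})$; the discrepancy is lower-order and does not affect the leading asymptotics of the volume. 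I would present this comparison in one short paragraph and then let the integral estimate above finish the argument.
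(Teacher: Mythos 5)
Your proposal is correct and follows essentially the same route as the paper: both reduce the claim to showing $\Theta_*(p)=0$ via the Angenent--Knopf asymptotic $\psi_T(s) \sim s(-\log s)^{-1/2}$ (equivalently (M4)), estimate $\vol(B(p,r))$ by $\int_0^r \psi_T^n\,ds \lesssim r^{n+1}(-\log r)^{-n/2}$, and divide by $r^{n+1}$ to conclude, while noting that regular points have density $1$. You are somewhat more careful than the paper's terse version (you flag the ball-versus-coordinate-slab comparison explicitly, write the warped-product volume element correctly, and treat the poles), but the key computation is identical.
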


\begin{proof}
According to ~\cite[Table 1]{ACK} ( or Remark ~\ref{rmrk-sing-metric} in this paper), at the singular time $T$, for the singular metric
\be
	g(T) = ds^2 + \psi_T(s)g_{can}
\ee
we have:
\be
	\psi_T(s) \sim s |\ln s|^{-\frac{1}{2}} \;\; as \;\; s \to 0
\ee
therefore, one computes
\begin{eqnarray}
	\Theta_*(p) &=& \liminf_{r \to 0} \frac{\vol (B(p,r))}{r^{n+1}} \\ &\le& C \liminf_{r \to 0} \frac{\int_0^r s \left(s |\ln s|^{-\frac{1}{2}} \right)^n }{r^{n+1}} \\ &\le& C \liminf_{r \to 0} \frac{ r^{n+1} \left(|\ln r|^{-\frac{1}{2}} \right)^n }{r^{n+1}}  \\ &=& C \liminf_{r \to 0} \left(|\ln r|^{-\frac{1}{2}} \right)^n  \\&=&0,
\end{eqnarray}
which means that the singular point $p \not\in \operatorname{set}(M,g(T))$.

For all the regular points $x \in M\setminus \{p\}$, we again have
\be
	\Theta_*(x) = 1.
\ee
This concludes the proof.
\end{proof}

\begin{figure}[h] %  figure placement: here, top, bottom, or page
   \centering
   \includegraphics[width=5in]{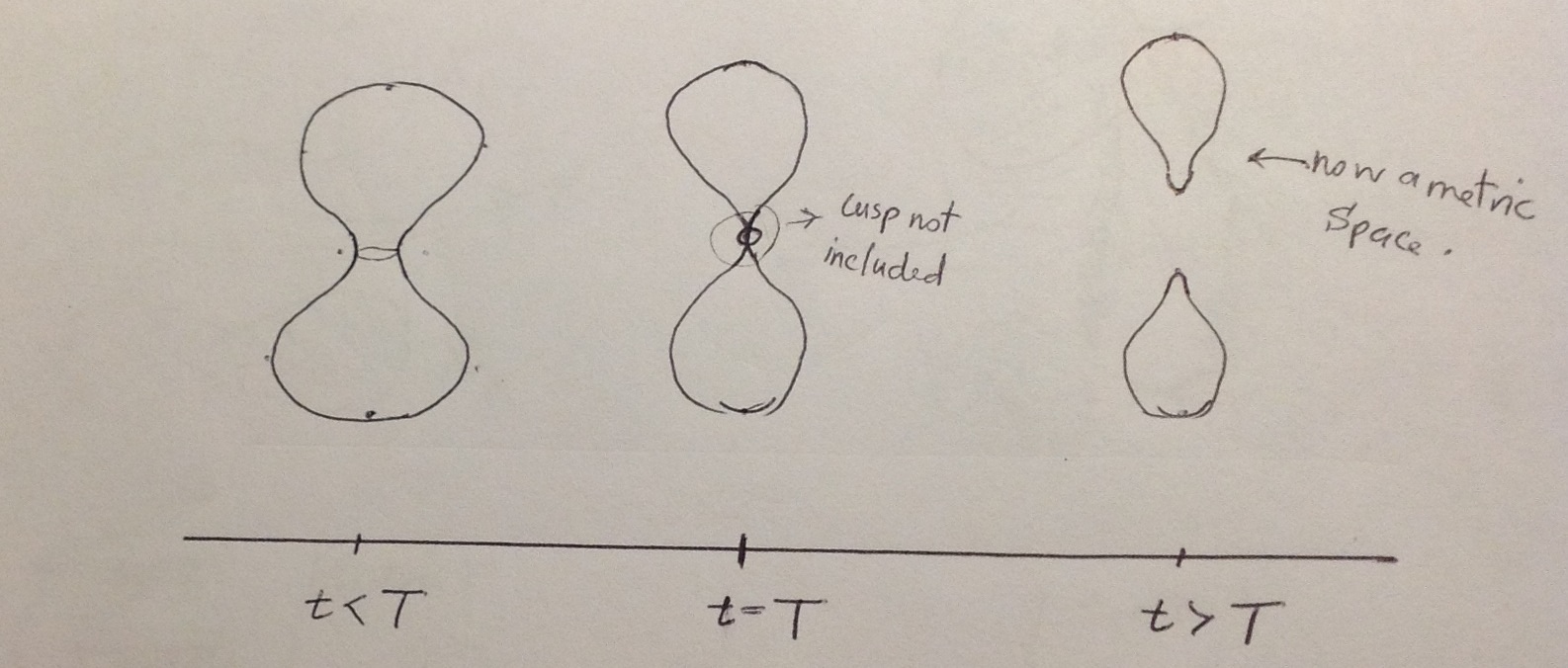}
   \caption{Angenent-Caputo-Knop's Ricci Flow Through Neckpinch Singularity Viewed as a \emph{Settled} Flow of Integral Current Spaces.}
   \label{fig-ACK-Set-IC}
\end{figure}

At the post surgery times $t > T$, the Flow is a result of the smooth forward evolution as in ~\cite{ACK} and hence consists of two separate smooth pointed Ricci Flows $\left(M_i , g_i(T) , p_i \right)$ $i=1,2$ obtained by regularizing the metric at the singular time. Points $p_i$ are just the future of the singular point $p$. Again in order to work in the framework of integral current spaces, we first need to make the post surgery flow into a metric space and also define an appropriate current structure on it. One way to make a current space out of the disjoint union $M_1 \disjointunion M_2$ as suggested by Knopf is to attach them by a thread. Another way is to define the metric using techniques from optimal transport which is described in the work of Author with Munn~\cite{Lakzian-Munn}. Here, we will focus on the thread approach. 

We should clarify that the added thread joins $p_1$ to $p_2$, has length $L(t)$ which is continuous with respect to $t$ and satisfies 
\be
	\lim_{t \searrow T} L(t) = 0.
\ee

%%%%%% SUBSECTION: Continuity Up to the Singular Time %%%%%%%%

\subsection{Continuity and Volume Convergnce as $t \nearrow T$}

Consider the neckpinch on the sphere with bounded diameter. Our goal is to use Theorem \ref{thm-sub-diffeo-new} to find an estimate on the intrinsic flat distance between the sphere prior to the singular time and the pinched sphere.

\begin{lem}\label{lem-metric-distortion}
	Let $\left(M , g(t) \right)$ be the Ricci flow on the $n+1$-sphere with a neckpinch singularity at time $T$. Then we have the following metric distortion estimate:
\be
	\left|  d_{(M,g(T))}(x_1 , x_2) -  d_{(M,g(t))}(x_1 , x_2) \right| \le C \sqrt{T - t}.	
\ee
\end{lem}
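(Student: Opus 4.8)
The plan is to bound the time-derivative of the distance function $d_{(M,g(t))}(x_1,x_2)$ and integrate over $[t,T]$, exactly in the spirit of the proof of Proposition~\ref{prop-diameter-bound} taken from~\cite{AK-precise}. First I would fix $x_1,x_2 \in M$ and recall that a minimizing geodesic between them (with respect to $g(t)$) may be taken, by rotational symmetry, along the profile curve, so that
\be
	d_{(M,g(t))}(x_1,x_2) = \int_{x_1}^{x_2} \phi(x,t)\,\mathrm{d}x
\ee
in the notation of~\cite{Neckpinch}, and the first-variation computation there gives
\be
	\frac{d}{dt} d_{(M,g(t))}(x_1,x_2) = n\left\{ \frac{\psi_s}{\psi}\Big|^{s(x_2)}_{s(x_1)} + \int_{s(x_1)}^{s(x_2)} \left(\frac{\psi_s}{\psi}\right)^2 \mathrm{d}s \right\}.
\ee
As in the cited proof, $\psi_s$ is bounded uniformly by Proposition~5.1 of~\cite{Neckpinch}, and the integral term is controlled by $C/\psi_{\min}(t)$; then Lemma~6.1 of~\cite{Neckpinch} gives $\psi_{\min}(t) \ge c\sqrt{T-t}$, so that
\be
	\left| \frac{d}{dt} d_{(M,g(t))}(x_1,x_2) \right| \le \frac{C}{\sqrt{T-t}}.
\ee

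Integrating this bound from $t$ to $T$ yields
\be
	\left| d_{(M,g(T))}(x_1,x_2) - d_{(M,g(t))}(x_1,x_2) \right| \le \int_t^T \frac{C}{\sqrt{T-\tau}}\,\mathrm{d}\tau = 2C\sqrt{T-t},
\ee
which is the desired estimate (absorbing the constant $2C$ into $C$). The one point needing care is that near the pinching region the reasoning based on minimizing along the profile curve must still apply uniformly in $x_1, x_2$; but since the Ricci curvature is positive on the outer regions $(-1,a(t)]$ and $[b(t),1)$ (Lemma~5.6 of~\cite{Neckpinch}), distances there are monotone and bounded, so it suffices to treat $x_1<x_2$ in a fixed compact subinterval around the neck, exactly as in Proposition~\ref{prop-diameter-bound}, and on that subinterval the integrand estimate above holds.

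The main obstacle is making sure the bound is genuinely uniform in $x_1, x_2$ (including the case where the geodesic passes through the shrinking neck) and that the limit metric $g(T)$ is reached in a way compatible with passing the derivative bound to the limit — i.e. that $d_{(M,g(t))}(x_1,x_2) \to d_{(M,g(T))}(x_1,x_2)$ as $t \nearrow T$, which follows since the profile $\psi(\cdot,t)$ converges to $\psi(\cdot,T)$ by Proposition~5.4 of~\cite{Neckpinch} and the length functionals converge accordingly. Once that is in hand, the integration step is routine and the estimate follows with a constant $C$ depending only on the solution $g(t)$.
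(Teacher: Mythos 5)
Your proof follows essentially the same route as the paper's: invoke the derivative bound $\left|\tfrac{d}{dt}d_{(M,g(t))}(x_1,x_2)\right|\le C/\sqrt{T-t}$ established in the proof of Proposition~\ref{prop-diameter-bound}, then integrate over $[t,T]$. The paper's proof is exactly this citation plus a one-line integration; you additionally recapitulate where the derivative bound comes from and flag the convergence of $d_{(M,g(t))}$ to $d_{(M,g(T))}$ at the endpoint, but the core argument is identical.
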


\begin{proof}
From the proof of Proposition \ref{prop-diameter-bound}, we have:
 \be
 	\left| \frac{d}{dt} d_{(M,g(t))}(x_1 , x_2) \right| \le \frac{C}{\sqrt{T -t}}.
 \ee

Hence a simple integration shows that
\be
	\left|  d_{(M,g(T))}(x_1 , x_2) -  d_{(M,g(t))}(x_1 , x_2) \right| \le C \sqrt{T - t}.
\ee

\end{proof}

\begin{lem}\label{lem-pre-surgery}
	For the neckpinch on the $n+1$-sphere, we have:
\be
	\lim_{t \nearrow T} d_{\mathcal{F}} \Big( (M,g(t)) , (M , g(T)) \Big) = 0.
\ee

\end{lem}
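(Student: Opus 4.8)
The plan is to apply Theorem~\ref{thm-sub-diffeo-new} with the \emph{good region} $U$ chosen to be a neighbourhood of the two polar caps that stays a definite distance from the neck, and to absorb everything near the neck into the error terms $\|T_i\|(X_i\setminus U)$. Concretely, fix a small parameter $\delta>0$ and let $U=U_\delta\subset M$ be the open set on which the limit profile satisfies $\psi_T>\delta$; since $g(t)$ is a smooth Ricci flow for $t<T$ and (by Angenent--Knopf) $g(t)\to g(T)$ smoothly away from the singular hypersurface, for $t$ close enough to $T$ the metric $g(t)$ is smooth on $U_\delta$ with $\psi(\cdot,t)>\delta/2$ there, and we may take $\psi_1=\psi_2=\mathrm{id}\colon U_\delta\to U_\delta$. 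The restricted metrics are \textbf{related} to $g(t)$ and $g(T)$ on $U_\delta$ since $U_\delta$ is an open Riemannian submanifold of each, and $\operatorname{set}(M,g(T))=M\setminus\{p\}\supset U_\delta$ by Lemma~\ref{lem-density-sing}. On $U_\delta$ the radius $\psi$ is bounded below, so Lemma~\ref{bounding-curvature-off-neckpinch} bounds $\|\Rm\|$ there by a constant $C_\delta$ depending only on $\delta$; then Proposition~\ref{Lipschitz-distance-Ricci Flow-local}, with the later time letting tend to $T$, gives
\[
 e^{-\sqrt{n}\,C_\delta (T-t)}\,g(T)(V,V)\le g(t)(V,V)\le e^{\sqrt{n}\,C_\delta (T-t)}\,g(T)(V,V)\qquad\forall\,V\in TU_\delta,
\]
so the hypotheses \eqref{thm-subdiffeo-new-1}--\eqref{thm-subdiffeo-new-2} hold with $\epsilon=\epsilon(t,\delta)=\sqrt{e^{\sqrt{n}C_\delta(T-t)}}-1\to 0$ as $t\nearrow T$ for each fixed $\delta$.

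Next I would check that, with $\delta$ fixed, every factor multiplying the bounded volumes in Theorem~\ref{thm-sub-diffeo-new} tends to $0$ as $t\nearrow T$. The extrinsic diameters satisfy $D_{U_i}\le\diam(M,g(\cdot))\le D_0$ uniformly near $T$, by the bounded-diameter hypothesis and Proposition~\ref{prop-diameter-bound}; hence the hemispherical width may be chosen as $a=a(t,\delta)$ with $a\le \tfrac{1}{\pi}\arccos(1+\epsilon(t,\delta))^{-1}\,D_0\to0$. The distance distortion $\lambda$ of \eqref{lambda-new} is, with $\psi_i=\mathrm{id}$, precisely the quantity estimated in Lemma~\ref{lem-metric-distortion}, so $\lambda\le C\sqrt{T-t}\to0$, and therefore $h$ and $\bar h$ of \eqref{thm-subdiffeo-new-4}--\eqref{thm-subdiffeo-new-5} also tend to $0$. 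Finally $\vol_{n+1}(U_i)\le\vol_{n+1}(M,g(\cdot))$ is uniformly bounded on a compact time interval, and $\partial U_\delta$ is a finite union of round $n$-spheres of radius $\delta$, so $\vol_n(\partial U_i)\le C\delta^{n}$ is bounded. Thus the first line of the estimate in Theorem~\ref{thm-sub-diffeo-new} is at most $(2\bar h+a)\,K(\delta)$ for a finite constant $K(\delta)$ independent of $t$, and this tends to $0$ as $t\nearrow T$.

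The remaining — and main — point is to control the error volumes $\|T_1\|(X_1\setminus U_\delta)=\vol_{n+1}\big(\{\psi(\cdot,t)\le\delta\},g(t)\big)$ and $\|T_2\|(X_2\setminus U_\delta)=\vol_{n+1}\big(\{\psi_T\le\delta\},g(T)\big)$ and show they are small, \emph{uniformly} for $t$ near $T$. Writing the volume in the warped-product coordinate, $\vol_{n+1}(\{\psi\le\delta\})=\vol(\sphere^{n})\int_{\{\psi\le\delta\}}\psi^{n}\,ds\le \vol(\sphere^{n})\,\delta^{n}\,\big|\{s:\psi\le\delta\}\big|$, and using the Angenent--Knopf profile asymptotics $\psi_T(s)\sim s|\ln s|^{-1/2}$ near the neck together with the gradient bound $|\psi_s|\le1$, the $s$-length of $\{\psi_T\le\delta\}$ is $O(\delta|\ln\delta|^{1/2})$, so $\|T_2\|(X_2\setminus U_\delta)\to0$ as $\delta\to0$; by the smooth convergence of $g(t)$ to $g(T)$ away from the neck and the precise neckpinch asymptotics of Angenent--Knopf near it, the same bound holds for $\|T_1\|(X_1\setminus U_\delta)$ uniformly for $t$ in a one-sided neighbourhood of $T$. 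I expect this uniform smallness of the neck-region volume — extracting it cleanly from the Angenent--Knopf asymptotics both at $t=T$ and for $t<T$ — to be the technical crux of the argument.

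Putting the pieces together is then a standard two-parameter argument. Given $\eta>0$, first choose $\delta>0$ so small that $\|T_1\|(X_1\setminus U_\delta)+\|T_2\|(X_2\setminus U_\delta)<\eta/2$ for all $t$ in some interval $(T-\sigma,T)$; then, with this $\delta$ fixed, shrink $\sigma$ if necessary so that $(2\bar h(t,\delta)+a(t,\delta))\,K(\delta)<\eta/2$ for $t\in(T-\sigma,T)$. Theorem~\ref{thm-sub-diffeo-new} then yields $d_{\mathcal{F}}\big((M,g(t)),(M,g(T))\big)<\eta$ for all such $t$, which is exactly $\lim_{t\nearrow T}d_{\mathcal{F}}\big((M,g(t)),(M,g(T))\big)=0$.
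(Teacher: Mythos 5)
Your proof is correct and follows essentially the same strategy as the paper: exhaust $M\setminus\{\text{neck}\}$ by open regions on which the curvature is uniformly bounded, invoke Lemma~\ref{bounding-curvature-off-neckpinch} and Proposition~\ref{Lipschitz-distance-Ricci Flow-local} for the metric equivalence, use the distortion bound from Proposition~\ref{prop-diameter-bound} (i.e.\ Lemma~\ref{lem-metric-distortion}), and feed everything into Theorem~\ref{thm-sub-diffeo-new}. The only cosmetic differences are that the paper indexes the exhaustion by $|x|\ge 1/j$ rather than by a threshold on $\psi_T$, and your closing two-parameter ($\delta$ then $\sigma$) argument is written out more carefully than the paper's ``take $t\nearrow T$, then $j\to\infty$'' step; in particular you explicitly flag the uniform-in-$t$ smallness of the neck volume as the needed ingredient, which the paper asserts from the diameter bound without detailing.
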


\begin{proof}
Notice that From lemma \ref{Equatorial-only-pinching}, we know that the pinching occurs only at the equator given by $x = 0$.  Let $M = \sphere^{n+1}$ and $S$ be the $n+1$ sphere, the singular set $\{x =0\}$ respectively. Let $U_{j}$ be the exhaustion of $M \setminus S$ defined by:
\be
	U_j = \left\{  (x,\theta) \in M :  |x| \ge 1/j  \right\}.
\ee

Each $U_j$ consists of two connected components $U_j^\beta$ , $\beta = 1,2$. Fix $j$, then from Lemma \ref{bounding-curvature-off-neckpinch}, There is constant $C_j$ depending on $j$ and the solution $g(t)$ such that :
\be
\sup_{U_j} \| \Rm \| \le C_j.
\ee

Therefore by Proposition \ref{Lipschitz-distance-Ricci Flow-local}, we have:
\be
	e^{ - \; \sqrt{n} C_j (t_2 - t_1)}g(t_2) (V,V) \le   g(t_1) (V,V)  \le e^{\sqrt{n} C_j (t_2 - t_1)}g(t_2) (V,V),
\ee
and letting $t_2 \nearrow T$, we get:
\be
	e^{ - \; \sqrt{n} C_j (T - t)}g(T) (V,V) \le   g(t) (V,V)  \le e^{\sqrt{n} C_j (T - t)}g(T) (V,V),
\ee
where $V \in TU_j $.

Therefore, in the setting of the Theorem \ref{thm-sub-diffeo-new}, we let $\epsilon_{tj} =  e^{\sqrt{n} C_j (T - t)} - 1$. We also need to compute the distortion $\lambda_{t,j}$ between these two length spaces which is defined as:
\be \label{lambda}
\lambda_{tj} = \sup_{x,y \in U_j} \left| d_{(M , g(t))}(\psi_1(x),\psi_1(y))-d_{(M , d_T)}(\psi_2(x),\psi_2(y)) \right|.
\ee

Let $x \in U_j^1$ and $y \in U_j^2$ then from Proposition \ref{prop-diameter-bound}, we obtain the following estimates on the distortion of distances which is independent of $j$; i.e.
\be\label{lambda-estimate}
	\lambda_{tj} \le C \sqrt{T - t }.
\ee

As in Theorem \ref{thm-sub-diffeo-new}, let
\be \label{thm-sub-diffeo-4}
h_{tj} =\sqrt{\lambda_{tj} ( \max\{D_{U_j^1},D_{U_j^2}\} +\lambda_{tj}/4 )\,},
\ee
and,
\be \label{thm-sub-diffeo-new-5}
\bar{h}_{tj}= \max\{h_{tj},  \sqrt{\epsilon_{tj}^2 + 2\epsilon_{tj}} \; D_{U_j^1}, \sqrt{\epsilon_{tj}^2 + 2\epsilon_{tj}} \; D_{U_j^2} \}.
\ee

For fixed $j$, as $t \nearrow T$, we have:
\be
	\epsilon_{tj} \to 0    \;\; \text{and} \;\; \lambda_{tj} \to 0.
\ee	

Therefore for all $j$:
\be
	\lim_{t \nearrow T} d_{\mathcal{F}}\left( (M, g(t)), (M, d_T) \right) \le   \vol_t (M \setminus U_j)
+ \vol_T(M \setminus U_j),
\ee

Also since the diameter stays bounded as $t \nearrow T$, one sees that as $j \to \infty$, 
\be\label{eq-vol-complement}
	\vol_t (M \setminus U_j) \;\;  \text{and} \;\; \vol_T(M \setminus U_j) \to 0.
\ee

Therefore,
\be
	\lim_{t \nearrow T} d_{\mathcal{F}}\left( (M, g(t)), (M, g(T)) \right) = 0.
\ee
\end{proof}

\begin{rmrk}\label{remark-volume-convergence-1}
Since the diameter stays bounded as $t \nearrow T$, (\ref{eq-vol-complement}) implies the volume convergence
\be
	\vol \left( M , g(t) \right) \to \vol \left( M , g(T) \right)
\ee
\end{rmrk}

%%%%%%% SUBSECTION: Continuity as $t \searrow T$  %%%%%%%

\subsection{Continuity and Volume Convergence as $t \searrow T$}

To complete the proof of the continuity of the Smooth Forward Evolution of the Ricci Flow out of neckpinch singularity, we need to also prove the continuity as the time approaches the singular time from the post surgery times. For simplicity we let $T=0$ then, post surgery times will correspond to positive values of $t$.

Our flow at the positive time $t>0$ consists of two pointed smooth Ricci flows $\left( M_1 , g_1(t) , p_1 \right) $ and $\left( M_2 , g_2(t), p_2 \right)$ both modeled on the $n+1$-sphere and a thread of length $L(t)$ joining $p_1$ to $p_2$. As before, we let $\left( M , g(t) \right)$ denote the pre-surgery Ricci flow and $\left( M , g(T) \right)$ to be the singular space at the singular time $T$. Also we let $X = \left( M_1  \cup M_2 , D(t) , T(t) \right)$ be the current space associated to the post surgery time $t$, where

$$D(t)(x,y) = \begin{cases}
d_1(t)(x,y) & x,y  \in M_1 \\
d_2(t)(x,y) & x,y  \in M_2\\
L(t) + d_1(t)(x,p_1) + d_2(t)(y,p_2) & x\in M_1 \; \;and\;\; y \in M_2,
\end{cases}$$
where, $d_i(t)$ is the metric induced by the Riemannian metric $g_i(t)$ on $M_i$.

\begin{lem}\label{lem-post-surgery}
	If $\left( M_i , g_i(t) , p_i \right)$ $i=1,2$ represent the two parts of the post-surgery Ricci flow ($t > T = 0$) obtained by smooth forward evolution out of a neckpinch singularity and if $L(t)$ is the length of the thread joining $p_1$ and $p_2$ at time $t$ with
	\be
		\lim_{t \searrow 0} L(t) = 0,
	\ee
then, letting $X = M_1 \cup M_2$, we have:
\be
	\lim_{t \searrow 0} d_{\mathcal{F}} \Big( \left(X , D(t), T(t)\right) , \left( M, g(T) \right) \Big) = 0.
\ee

\end{lem}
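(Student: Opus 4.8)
The plan is to estimate $d_{\mathcal F}\big((X,D(t),T(t)),(M,g(T))\big)$ by splitting the singular space $(M,g(T))$ into its two $C^1$-halves $M_1^T, M_2^T$ joined at the singular point $p$, and comparing each half with the corresponding post-surgery manifold $(M_i,g_i(t))$ using Theorem~\ref{thm-sub-diffeo-new}. First I would fix a small scale $\omega>0$ and let $U_i^\omega\subset M_i$ be the region on which the post-surgery metric $g_i(t)$ agrees (up to the ACK bounds) with $g(T)$, namely the complement of the neighborhood $\mathcal N_\omega$ of $p_i$ where $\psi<\rho_*\sqrt\omega$; this $U_i^\omega$ is diffeomorphic to a corresponding subregion $\tilde U_i^\omega\subset M_i^T\subset M\setminus\{p\}$. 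On these regions, the ACK asymptotic profile (Theorem~\ref{ACK-main-thm}) together with the curvature bound $\|\Rm\|\le C/\psi^2$ (Lemma~\ref{bounding-curvature-off-neckpinch}) and the local distance-distortion Proposition~\ref{Lipschitz-distance-Ricci Flow-local} gives $g_i(t)\le(1+\epsilon_{\omega,t})^2 g(T)$ and vice versa on $U_i^\omega$, with $\epsilon_{\omega,t}\to0$ as $t\searrow0$ for fixed $\omega$. I would then feed $U=\tilde U_1^\omega\sqcup\tilde U_2^\omega$, $X_1=X(t)$, $X_2=(M,g(T))$ into Theorem~\ref{thm-sub-diffeo-new}.

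The second step is to control the distance-distortion term $\lambda$ from \eqref{lambda-new}, which is where the thread enters. For $x,y$ in the same half, $d_{X(t)}$ and $d_{(M,g(T))}$ differ by at most the distortion coming from $g_i(t)$ vs $g(T)$ on that half plus a contribution from geodesics that dip into $\mathcal N_\omega$; both are $O(\sqrt\omega)+o_t(1)$. For $x\in \tilde U_1^\omega$, $y\in\tilde U_2^\omega$, one has $d_{X(t)}(x,y)=L(t)+d_1(t)(x,p_1)+d_2(t)(y,p_2)$ while $d_{(M,g(T))}(x,y)=d(x,p)+d(p,y)$, so the difference is bounded by $L(t)$ plus the one-sided distortions $|d_i(t)(x,p_i)-d(x,p)|$, again $O(\sqrt\omega)+o_t(1)$ using the diameter bound of Proposition~\ref{prop-diameter-bound} applied to each half and the ACK profile near $p_i$. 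Hence $\lambda=\lambda_{\omega,t}\to 0$ as $t\searrow0$ for fixed $\omega$, so $h$ and $\bar h$ in \eqref{thm-subdiffeo-new-4}–\eqref{thm-subdiffeo-new-5} also tend to $0$. Plugging into the conclusion of Theorem~\ref{thm-sub-diffeo-new} yields
\[
\limsup_{t\searrow0} d_{\mathcal F}\big((X,D(t),T(t)),(M,g(T))\big)\le \|T(T)\|\big((M,g(T))\setminus U^\omega\big)+\limsup_{t\searrow0}\|T(t)\|\big(X(t)\setminus U^\omega\big).
\]
The third step is to send $\omega\to0$: the volume of $\mathcal N_\omega$ in the singular metric goes to $0$ directly from the profile $\psi_T(s)^2\sim \tfrac{n-1}{4}s^2/|\ln s|$ (as in Lemma~\ref{lem-density-sing}), and the volume of $X(t)\setminus U^\omega$ is, for $t$ small, comparable to that of $\mathcal N_\omega$ in $g(T)$ by the ACK bounds; the thread contributes nothing since it is $1$-dimensional and drops out of the settled completion. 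Both complement-volumes therefore vanish as $\omega\to0$, giving the claim.

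The main obstacle I expect is the interplay of the two limiting parameters $t\searrow0$ and $\omega\to0$: one must choose $U^\omega$ so that the Lipschitz-closeness constant $\epsilon_{\omega,t}$ and the distortion $\lambda_{\omega,t}$ are controlled \emph{uniformly} enough that taking $t\searrow0$ first (for fixed $\omega$) and then $\omega\to0$ is legitimate, and this relies essentially on the ACK curvature and profile estimates holding on $M\setminus\mathcal N_\omega$ with bounds depending only on $\omega$. A secondary technical point is justifying that on $U_i^\omega$ the metric $D(t)$ restricted from $X(t)$ is \emph{related} (Definition~\ref{defn-related-metrics}) to $g_i(t)$ in the sense required to invoke Theorem~\ref{thm-sub-diffeo-new}; this is immediate since $D(t)$ is the intrinsic length metric of $g_i(t)$ on each $M_i$, but it must be noted. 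Everything else is an assembly of the estimates already quoted from \cite{Neckpinch}, \cite{AK-precise}, and \cite{ACK}.
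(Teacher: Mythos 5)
Your proposal follows the paper's proof essentially step for step: exhaust both spaces by the $\omega$-regions away from the surgery caps, use the ACK smooth forward evolution to get $(1+\epsilon)^2$-closeness of $g_i(t)$ and $g(T)$ there, bound the distance distortion $\lambda$ using the thread length $L(t)$ and the metric closeness, feed everything into Theorem~\ref{thm-sub-diffeo-new}, and then drive the complement volumes to zero by sending $\omega\to 0$. This is the same decomposition, the same key lemma, and the same two-parameter limit.

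There is one internal inconsistency worth flagging. You correctly observe that $\lambda_{\omega,t}$ picks up a contribution $O(\sqrt{\omega})$ from geodesics that dip into $\mathcal N_\omega$ (where the $(1+\epsilon)^2$-comparison fails), so that your own estimate reads $\lambda_{\omega,t}=O(\sqrt{\omega})+o_t(1)$; but you then assert that $\lambda_{\omega,t}\to 0$ as $t\searrow 0$ for fixed $\omega$, which does not follow from that bound --- the correct statement is only $\limsup_{t\searrow 0}\lambda_{\omega,t}=O(\sqrt{\omega})$. Consequently $\bar h$ and $a$ in Theorem~\ref{thm-sub-diffeo-new} retain an $O(\sqrt{\omega})$ floor as $t\searrow 0$, and your displayed $\limsup$ inequality should carry an extra term of order $\bar h_\omega$ times the (bounded) volumes and perimeters of $U^\omega$. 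This is harmless --- the extra term vanishes in the subsequent $\omega\to 0$ limit because diameters and volumes stay uniformly bounded --- but as written the intermediate inequality is not justified by your own $\lambda$-estimate. Interestingly, the paper's own bound $\lambda_{tj}\le L(t)+\big((1+\epsilon(t))^2-1\big)\big(\diam(M_1,g_1(t))+\diam(M_2,g_2(t))\big)$ simply omits the cap-dipping contribution you noticed, so your more careful accounting is a genuine refinement; you just need to propagate it consistently through the rest of the argument rather than discarding it when concluding $\lambda\to 0$.

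Your secondary observation --- that $D(t)$ restricted to each $M_i\setminus\bar{\mathcal N}_\omega^i$ is \emph{related} to $g_i(t)$ in the sense of Definition~\ref{defn-related-metrics}, so that the hypotheses of Theorem~\ref{thm-sub-diffeo-new} are actually met --- is correct and is a hypothesis the paper's proof uses implicitly; it is good that you state it.
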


\begin{proof}
Similar to the proof of the continuity for pre-surgery times, we need to find proper diffeomorphic open subsets. For fixed small $\omega >0$ consider the open subsets $ \mathcal{N}^i_\omega \subset M_i $ for  as defined in Section ~\ref{sect-smooth-forward-evolution}. Let $U_1$ be the open subset of $M$ defined by: 
\be
	U_1 = M \setminus \left( \bar{ \mathcal{N}}^1_\omega \cup \setminus  \bar{ \mathcal{N}}^2_\omega \right) ,
\ee
therefore,$U_1$ is comprised of two connected components $U_1^\beta$ for $\beta = 1,2$.

And let $U_2$ be the open subset of $M_1 \cup L(t) \cup M_2$ defined as 
\be
	U_2 = \left(M_1 \setminus \bar{ \mathcal{N}}^1_\omega \right) \cup \left(M_2 \setminus  \bar{ \mathcal{N}}^2_\omega \right).
\ee

Then obviously, these two open sets are diffeomorphic through diffeomorphisms between their correspondng connected components:
\begin{eqnarray}
	&& \psi_1 : W_1 \to M_1 \setminus \bar{ \mathcal{N}}^1_\omega \\
	&&\psi_2 : W_2 \to M_2 \setminus \bar{ \mathcal{N}}^2_\omega .
\end{eqnarray}

Now let $\epsilon(t)$ be the smallest positive number for which
\be
\psi_i^*g_i(t)(V,V)
< (1+\epsilon(t))^2 g(T)(V,V) \qquad \forall \, V \in TW_i
\ee
and
\be
\psi_i^*g_i(t)(V,V) <
(1+\epsilon(t))^2 g(T)(V,V) \qquad \forall \, V \in TW_i.
\ee
then, by the construction of the Smooth Forward Evolution as seen in Section ~\ref{sect-smooth-forward-evolution}, as $t \searrow 0$ , the metrics $\psi_i^*g_i(t)$ smoothly converge to $g(T)$ on $W_i$ therefore,
\be
	\lim_{t \searrow 0}\epsilon(t) = 0.
\ee

Now let $\omega_j > 0$ be a sequence for which
\be
	\lim_{j \to \infty}\omega_j = 0
\ee
and consider the length distortions:
\be \label{lambda-post}
\lambda_{tj} = \sup_{x,y \in U_1} \left| d_{(X , D(t))}(\psi_1(x),\psi_1(y)) - d_{(M , d_T)}(x, y) \right|.
\ee

Then, 
\be
	\lambda_{tj} \le L(t) + \Big((1+\epsilon(t))^2 - 1 \Big) \Big(\diam\left(M_1 , g_1(t) \right) + \diam \left( M_2 , g_2(t)\right)  \Big)
\ee

As in Theorem \ref{thm-sub-diffeo}, we define
\be \label{thm-sub-diffeo-4}
h_{tj} =\sqrt{\lambda_{tj} ( \max\{D_{U_j^1},D_{U_j^2}\} +\lambda_{tj}/4 )\,},
\ee
and,
\be \label{thm-sub-diffeo-5}
\bar{h}_{tj}= \max\{h_{tj},  \sqrt{\epsilon_{tj}^2 + 2\epsilon_{tj}} \; D_{U_j^1}, \sqrt{\epsilon_{tj}^2 + 2\epsilon_{tj}} \; D_{U_j^2} \}.
\ee

For fixed $j$, as $t \searrow T$, we have:
\be
	\epsilon_{tj} \to 0    \;\; \text{and} \;\; \lambda_{tj} \to 0.
\ee	

Therefore for all $j$:
\be
	\lim_{t \searrow T} d_{\mathcal{F}}\left( (X, D(t)), (M, g(T) \right) \le   \vol_t (X \setminus U_2)
+ \vol_{g(T)}(M \setminus U_1),
\ee

Also since the diameter stays bounded as $t \searrow T$, one sees that as $j \to \infty$, 
\be\label{eq-vol-complement-2}
	\vol_{g(T)} (M \setminus U_1) \;\;  \text{and} \;\; \vol_t(X \setminus U_2) \to 0.
\ee

Therefore,
\be
	\lim_{t \searrow T} d_{\mathcal{F}} \Big( \left(X, D(t) \right) , \left( M, g(T)\right) \Big) = 0.
\ee

\end{proof}

\begin{rmrk}\label{remark-vol-convergence-2}
	Notice that since the diameter is bounded as $t \searrow T$, (\ref{eq-vol-complement-2}) gives
\be
	\vol \left( M_1 , g_1(t)\right) + \vol \left( M_2 , g_2(t) \right) \to \vol \left( M , g(T) \right)
\ee
\end{rmrk}

By applying Theorem \ref{thm-sub-diffeo-new}, we can also find an estimate on the Intrinsic Flat distance between two post surgery integral current spaces at times $0 < t_1 < t_2$. 

\begin{thm}
Suppose $\left(X , D(t) , T(t) \right)$ is as before, and the smooth flows $ \left( M_1, g_1(t) \right)$ and $\left( M_1, g_1(t) \right)$ do not encounter singularities on $(0,T)$, then as $t \to t_0 \in (0,T) $, we have
\be
	\lim_{t \to t_0} d_{\mathcal{F}} \Big( \left(X, D(t) \right) ,  \left( X, D(t) \right) \Big) = 0.
\ee
\end{thm}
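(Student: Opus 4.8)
The plan is to obtain this as a direct application of Theorem~\ref{thm-sub-diffeo-new} in which the distinguished subregions are \emph{all} of the manifold parts, which is legitimate precisely because both $X(t)$ and $X(t_0)$ are smooth away from a one-dimensional thread that carries no current. Fix $t_0\in(0,T)$ and choose $\delta>0$ with $[t_0-\delta,t_0+\delta]\subset(0,T)$. On this compact interval each of the two smooth Ricci flows $(M_i,g_i(t))$, $i=1,2$, has bounded curvature $\|\Rm\|\le C_i$ with respect to $g_i(t_0)$, so Proposition~\ref{Lipschitz-distance-Ricci Flow} yields, for all $V$ and all $t$ in the interval,
\be
e^{-\sqrt n\,C_i|t-t_0|}\,g_i(t_0)(V,V)\le g_i(t)(V,V)\le e^{\sqrt n\,C_i|t-t_0|}\,g_i(t_0)(V,V),
\ee
together with the corresponding two-sided comparison of the Riemannian distances $d_i(t)$ and $d_i(t_0)$ on $M_i$.

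Next I would set up Theorem~\ref{thm-sub-diffeo-new} with the geodesic spaces $Y_1=M_1\cup(\text{thread})\cup M_2$ carrying the metric $D(t)$ and $Y_2$ the same underlying set carrying $D(t_0)$, with integral current spaces $X_1=(M_1\cup M_2,D(t),T)$ and $X_2=(M_1\cup M_2,D(t_0),T)$, where $T=\int_{M_1}+\int_{M_2}$. Since $D(t)$ restricted to $M_i$ is exactly $d_i(t)$, the metric $D(t)$ is \textbf{related} to $g_i(t)$ on $M_i$, so the structural hypotheses hold. Take $U=U_1=U_2=M_1\sqcup M_2$ (a disconnected closed Riemannian manifold, hence $\partial U=\emptyset$), $\psi_i=\mathrm{id}$, and $\epsilon=\epsilon(t):=\sqrt{e^{\sqrt n\,C|t-t_0|}}-1$ with $C=\max\{C_1,C_2\}$; the metric comparison above makes (\ref{thm-subdiffeo-new-1})--(\ref{thm-subdiffeo-new-2}) hold with this $\epsilon$. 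Because $\partial U_i=\emptyset$ and the thread lies outside $\operatorname{set}(X_i)$ (so $\|T_i\|(X_i\setminus U_i)=0$), the estimate of Theorem~\ref{thm-sub-diffeo-new} collapses to
\be
d_{\mathcal F}\big(X(t),X(t_0)\big)\le\big(2\bar h(t)+a(t)\big)\,\big(\vol(M_1,g_1(t))+\vol(M_2,g_2(t))\big),
\ee
with $a(t),h(t),\bar h(t)$ built as in (\ref{thm-subdiffeo-new-3})--(\ref{thm-subdiffeo-new-5}) from $\epsilon(t)$, the extrinsic diameters $D_{U_i}=\diam(M_i,g_i(t))$, and the distance distortion $\lambda(t)$.

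The one quantity requiring an argument is $\lambda(t)=\sup_{x,y}|D(t)(x,y)-D(t_0)(x,y)|$, which I would bound using the piecewise definition of $D$: if $x,y$ lie in the same component $M_i$ then the difference equals $d_i(t)(x,y)-d_i(t_0)(x,y)$ and is at most $(e^{\sqrt n\,C_i|t-t_0|}-1)\diam(M_i,g_i(t_0))$; if $x\in M_1$ and $y\in M_2$ then it is at most $|L(t)-L(t_0)|+\sum_i(e^{\sqrt n\,C_i|t-t_0|}-1)\diam(M_i,g_i(t_0))$. Continuity of $L$ at $t_0$ and of the diameters (again from Proposition~\ref{Lipschitz-distance-Ricci Flow}) give $\lambda(t)\to0$, hence $\epsilon(t)\to0$, $a(t)\to0$, $\bar h(t)\to0$, while $\vol(M_i,g_i(t))$ stays bounded on $[t_0-\delta,t_0+\delta]$; feeding this into the displayed bound gives $d_{\mathcal F}(X(t),X(t_0))\to0$, which is the assertion. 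The main (and essentially only) obstacle is the distance-distortion estimate across the thread, i.e. $\lambda(t)\to0$; but this is immediate from the explicit formula for $D(t)$ together with the bounded-curvature distance comparison and the hypothesis $L\in C^0$. Everything else — verifying the ``related metric'' property (automatic since $D(t)|_{M_i}=d_i(t)$), noting $\partial U_i=\emptyset$ and $\|T_i\|(X_i\setminus U_i)=0$, and the volume bound on a compact time interval — is routine bookkeeping.
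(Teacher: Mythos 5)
Your proof is correct, and it actually streamlines the paper's argument. The paper's own proof for this theorem reuses the setup of Lemma~\ref{lem-post-surgery}: it introduces an exhaustion parameter $\omega_j \to 0$, cuts out the shrinking neighborhoods $\mathcal{N}^i_{\omega_j}$ of the cap points, applies Theorem~\ref{thm-sub-diffeo-new} to the truncated regions, and then takes a double limit (first $t\to t_0$ for each fixed $j$, then $j\to\infty$ to absorb the complement volumes). That exhaustion is forced in Lemma~\ref{lem-post-surgery} because one endpoint of the comparison is the \emph{singular} space $(M,g(T))$, which is not smooth at the pinch. You observed, correctly, that it is unnecessary here: both $X(t)$ and $X(t_0)$ are smooth away from a one-dimensional thread that carries zero $(n+1)$-mass, so you may take $U_1=U_2=M_1\sqcup M_2$ outright, giving $\partial U_i=\emptyset$ and $\|T_i\|(X_i\setminus U_i)=0$, and the Theorem~\ref{thm-sub-diffeo-new} bound collapses to $\big(2\bar h(t)+a(t)\big)\sum_i\vol(M_i,g_i(t))$ with no excess or complement terms. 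Combined with Proposition~\ref{Lipschitz-distance-Ricci Flow} on the compact time interval and the piecewise formula for $D(t)$ to control $\lambda(t)$, this gives a single-limit proof. The content is the same — bounded curvature on a compact interval $\Rightarrow$ $(1+\epsilon(t))$-bi-Lipschitz metrics and $\lambda(t)\to 0$ via $L\in C^0$ — but your choice of subregions eliminates the $j$-limit entirely, which is a genuine simplification.
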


\begin{proof}

Since the post surgery flows do not encounter any other singularity on $(0,T)$, we have
\be
	\sup_{M_i \times [t_0 - \delta , t_0 + \delta]} ||\Rm|| \le C = C(\delta)
\ee
with respect to a fixed background metric $g_0$ on $M_1 \sqcup M_2$. 

Let $\epsilon(t) =  \sqrt{e^{\sqrt{n} C |t - t_0|}} - 1 $ and let $\omega_j >0$ be a sequence with
\be
	\lim_{j \to \infty} \omega_j = 0.
\ee

Define $U_i$ as in Lemma \ref{lem-post-surgery}. Then, we have the following estimate on the metric distortion
\begin{eqnarray}
	\lambda_{tj} &=& \sup_{x,y \in U_1} \left| d_{(X , D(t_0))}(x,y) - d_{(X , D(t))}(x, y) \right| \notag \\ &\le& |L(t) - L(t_0)| + \Big((1+\epsilon(t))^2 - 1 \Big) \Big(\diam\left(M_1 , g_1(t) \right) + \diam \left( M_2 , g_2(t)\right) \Big)
\end{eqnarray}

We observe that as $t \to t_0$, $\epsilon(t) \to 0$ and $L(t) \to L(t_0)$ due to continuity therefore, $\lambda_{tj} \to 0$. The rest of the proof is the same as in Lemma \ref{lem-post-surgery}.

\end{proof}

%\addcontentsline{toc}{chapter}{Bibliography}

\bibliographystyle{amsalpha}
\bibliography{reference2012}

%\bibliographystyle{AMSalpha}
%\bibliography{reference2012}{}

\end{document}